\newtheorem{theorem}{Theorem}[section]
\newtheorem{lemma}[theorem]{Lemma}
\newtheorem{prop}[theorem]{Proposition}
\newtheorem{corollary}[theorem]{Corollary}
\theoremstyle{definition}
\newtheorem{definition}[theorem]{Definition}
\newtheorem{example}[theorem]{Example}
\theoremstyle{remark}
\newtheorem{remark}[theorem]{Remark}
\numberwithin{equation}{section}
\newcommand{\R}{\mathbb R}
\newcommand{\N}{\mathbb N}
\newcommand{\rn}{{{\mathbb R}^n}}
\DeclareMathOperator{\supp}{supp}
\DeclareMathOperator*{\essinf}{ess\,inf}
\DeclareMathOperator*{\esssup}{ess\,sup}
\newcommand{\pp}{{p(\cdot)}}
\newcommand{\cpp}{{p'(\cdot)}}
\newcommand{\Lp}{L^{p(\cdot)}}
\newcommand{\Pp}{\mathcal P}
\newcommand{\qq}{{q(\cdot)}}
\def\Xint#1{\mathchoice
   {\XXint\displaystyle\textstyle{#1}}%
   {\XXint\textstyle\scriptstyle{#1}}%
   {\XXint\scriptstyle\scriptscriptstyle{#1}}%
   {\XXint\scriptscriptstyle\scriptscriptstyle{#1}}%
   \!\int}
\def\XXint#1#2#3{{\setbox0=\hbox{$#1{#2#3}{\int}$}
     \vcenter{\hbox{$#2#3$}}\kern-.5\wd0}}
\def\dashint{\Xint-}
\newcommand{\E}{\mathcal{E}}
\newcommand{\Kalconst}[2]{[ \hspace{0.3mm} #1 \hspace{0.3mm} ]_{K_0^\alpha(#2)}}
\newcommand{\Kconst}[2]{[ \hspace{0.3mm} #1 \hspace{0.3mm} ]_{K_0(#2)}}
\begin{document}

\title[Necessary conditions for the boundedness of fractional operators]
{Necessary conditions for the boundedness of fractional operators on variable Lebesgue spaces}

\author{David Cruz-Uribe, OFS}
\address{Dept. of Mathematics \\
University of Alabama \\
 Tuscaloosa, AL 35487, USA}
\email{dcruzuribe@ua.edu}

\author{Troy Roberts}
\address{Dept. of Mathematics \& Statistics\\
Washington University in St. Louis\\
 One Brookings Drive\\  St. Louis, MO, 63130,  USA}
\email{r.troy@wustl.edu}

\keywords{variable Lebesgue spaces, fractional maximal operators, fractional integral operators, Riesz potentials, fractional singular integrals}

\subjclass[2010]{Primary:  42B20, 42B25, 42B35; Secondary:  46A80, 46E30}

\thanks{This paper is a revised version of the masters thesis written by the second author under the direction of the first.  The first author was partially supported by a Simons Foundation Travel Support for Mathematicians Grant and is currently supported by NSF grant DMS-2349550. }

\date{July 11, 2024}

\begin{abstract}
  In this paper we prove necessary conditions for the boundedness of fractional operators on the variable Lebesgue spaces.  More precisely, we find necessary conditions on an exponent function $\pp$ for a fractional maximal operator $M_\alpha$ or a non-degenerate fractional singular integral operator $T_\alpha$, $0 \leq \alpha < n$, to satisfy weak $(\pp,\qq)$ inequalities or strong $(\pp,\qq)$ inequalities, with $\qq$ being defined pointwise almost everywhere by 
  \[ \frac{1}{p(x)} - \frac{1}{q(x)} = \frac{\alpha}{n}. \]
 We first prove preliminary results linking fractional averaging operators and the $K_0^\alpha$ condition, a qualitative condition on $\pp$ related to the norms of characteristic functions of cubes, and show some useful implications of the $K_0^\alpha$ condition. We then show that if $M_\alpha$ satisfies weak $(\pp,\qq)$ inequalities, then $\pp \in K_0^\alpha(\R^n)$. We use this to prove that if $M_\alpha$ satisfies strong $(\pp,\qq)$ inequalities, then $p_->1$. Finally, we prove a powerful pointwise estimate for $T_\alpha$ that relates $T_\alpha$ to $M_\alpha$ along a carefully chosen family of cubes. This allows us to prove necessary conditions for fractional singular integral operators similar to those for fractional maximal operators.
\end{abstract}

\maketitle
\section{Introduction}
\indent \indent The variable Lebesgue spaces, $L^{\pp}(\R^n)$, are generalizations of the classical Lebesgue spaces, $L^p$, where the constant exponent $p$ is replaced with a variable exponent function $\pp$. These spaces have been extensively studied for more than thirty years:  for a detailed history, see the introduction of \cite{VLS}. They have been studied both for their intrinsic interest as Banach function spaces and also due to their use in the treatment of partial differential equations with non-standard growth conditions (see \cite{RuzickaElectrofluids,Mingione2006,Chan1997,Fan2003,Nuortio2010}). 

\indent A common problem in harmonic analysis is to determine when operators are bounded between Banach spaces. In the case of the variable Lebesgue spaces, this leads to the natural question of how classical operators, such as maximal functions and singular integral operators, behave on these spaces. To put our results in context, we recall some results on the classical and weighted Lebesgue spaces.

It is well-known that the Hardy-Littlewood maximal operator,
\[ Mf(x) = \sup_{Q \ni x} \dashint_{Q} |f(y)| \, dy, \]
where the supremum is taken over cubes containing $x$ with sides parallel to the coordinate axis, is bounded from $L^p(\R^n) \to L^p(\R^n)$ if and only if $1 < p \leq \infty$ (see \cite{GraBook}). 
The fractional maximal operator,
\[ M_\alpha f(x) = \sup_{Q \ni x} |Q|^{\frac{\alpha}{n}} \dashint_Q |f(y)| \, dy, \hspace{10mm} 0 < \alpha < n, \]
was introduced by Muckenhoupt and Wheeden~\cite{MuckenhouptWheeden1974};  they proved that for $1 < p \leq \tfrac{n}{\alpha}$,  $M_\alpha : L^p(\R^n) \to L^q(\R^n)$, with $q$ being defined by 
\begin{equation} \label{eqn:sobolev}
 \frac{1}{p} - \frac{1}{q} = \frac{\alpha}{n}. 
 \end{equation}
%

In the weighted Lebesgue spaces, the situation is more complicated.  The maximal operator satisfies $M : L^p(v) \rightarrow L^{p,\infty}(u)$, $1<p<\infty$, if and only if the pair of weights $(u,v)$ satisfies the Muckenhoupt $A_p$ condition.  Similarly, the fractional maximal operator satisfies  $M_\alpha : L^p(v) \rightarrow L^{q,\infty}(u)$, $1 \leq  p < \tfrac{n}{\alpha}$, if and only if  $(u,v)$ satisfies the two-weight fractional Muckenhoupt condition, $A_{p,q}$.  Moreover, in the one-weight case (i.e., when $u=v$) these conditions are sufficient for the  corresponding strong-type inequalities.   However, in the two-weight case, they are not sufficient for the strong-type inequalities.  (For definitions and proofs, see~\cite{GraBook,MuckenhouptWheeden1974,MR3642364}.)  

We are also interested in norm inequalities for singular integrals.  Calder\'on-Zygmund operators (CZOs) are singular integral operators whose kernels satisfy certain decay and smoothness conditions.  They  satisfy $L^p$ bounds for $1 < p < \infty$, but are not bounded on $L^1$ or $L^\infty$ (see \cite{GraBook}).  Fractional CZOs are defined similarly, but satisfy $L^p\rightarrow L^q$ estimates for $1<p<\frac{n}{\alpha}$.  (See~\cite{MR3470665,MR3688149}.  A precise definition is given in Section \ref{OperInt}.) Important examples of CZOs are the Hilbert transform and its higher dimensional analogues, the Riesz transforms. 

In the weighted Lebesgue spaces, in the one-weight case the Muckenhoupt $A_p$ condition is sufficient for a CZO to satisfy both weak and strong $(p,p)$ inequalities, $1<p<\infty$, and this condition is also necessary if the operator satisfies a weak non-degeneracy condition introduced by Stein~\cite{SteinBook} which is satisfied, for instance, by the Riesz transforms.  Similarly, fractional CZOs satisfy weak and strong $(p,q)$ inequalities, $1<p<\frac{n}{\alpha}$, if and only if the weight satisfies the Muckenhoupt $A_{p,q}$ condition.  In the two-weight case, however, these conditions are necessary but not sufficient.  (For the one-weight results for CZOs, see~\cite{SteinBook}; for fractional CZOs these results are implicit in~\cite{MuckenhouptWheeden1974,MR3642364}.  For the two-weight results, see~\cite{MR3470665,MR4319616}.)

\medskip

The boundedness properties of these operators have been extensively studied on variable Lebesgue spaces, but the question of  the "right" necessary and sufficient conditions for them to be bounded are not completely understood. To state the known results, we first give some preliminary definitions.  
Let $\mathcal{P}(\R^n)$ be the set of all Lebesgue measurable functions $\pp : \R^n \to [1,\infty]$. Given $\pp \in \mathcal{P}(\R^n)$, we write
    \begin{gather*}
    p_- = \essinf_{x \in \R^n} p(x), \hspace{8mm} p_+ = \esssup_{x \in \R^n} p(x).
\end{gather*}  
Hereafter, given $\pp \in \mathcal{P}(\R^n)$ such that $p_+ \leq \tfrac{n}{\alpha}$ for some $\alpha$, $0 \leq \alpha < n$, we define $q(\cdot) \in \mathcal{P}(\R^n)$ by
\begin{equation} \label{q dot defn}
    \frac{1}{p(x)} - \frac{1}{q(x)} = \frac{\alpha}{n}, \hspace{10mm} x \in \R^n.
\end{equation}
This definition generalizes~\eqref{eqn:sobolev} to the variable exponent setting.

It is known that the exponent function $\pp$ must have some kind of regularity for these operators to be bounded.  Early on it was proved that log-H\"older continuity is sufficient for the Hardy-Littlewood maximal operator to be bounded on $\Lp$,  assuming $p_- > 1$, and for weak-type bounds when $p_- = 1$ (see \cite{DieningMaxBddConstExp,Nekvinda,MR1976842,VLS}). It is also sufficient for the fractional maximal operator to satisfy $M_\alpha : \Lp \rightarrow L^{\qq}$, where $\qq$ is defined by~\eqref{q dot defn}
(see~\cite{MR2210118}).  
Log-H\"older continuity is also sufficient for CZOs to be bounded on $L^\pp$ if $1<p_-\leq p_+<\infty$ (see \cite{MR2009242,MR2210118}), and for the Riesz potentials (and so, implicitly, fractional CZOs) to map $L^\pp$ to $L^\qq$ if $1<p_-\leq p_+ <\frac{n}{\alpha}$ (see~\cite{MR2210118}). 

However, log-H\"older continuity is not necessary for any of these operators to be bounded. For the Hardy-Littlewood maximal operator this was shown by Lerner in \cite{lerner2005some}, who constructed an example of a discontinuous exponent function $\pp$ such that $M$ is bounded on $\Lp$.  This example also yields examples for the other operators:  this follows from the theory of Rubio de Francia extrapolation in the scale of variable Lebesgue spaces (see~\cite{MR2210118,VLS}).  (This fact is part of the folklore of variable Lebesgue spaces, but does not seem to be in the literature.  We sketch the short proof in  Remark~\ref{remark:extrapol} below.)

Diening~\cite{NeccAndSuffDiening} gave a necessary and sufficient condition for the maximal operator to be bounded  that is difficult to check in practice, but has many important theoretical consequences. More recently, Lerner~\cite{lerner2023boundedness} gave a different necessary and sufficient condition that is also difficult to check.   CZOs are bounded on $\Lp$, provided $1<p_-\leq p_+<\infty$, if and only if the maximal operator is.  The sufficiency follows from extrapolation (again see~\cite{MR2210118,VLS}); the necessity was proved by Rutsky~\cite{MR3977081,MR3580165} in the setting of Banach lattices. (We note in passing that it would be interesting to have a direct proof of this result for variable Lebesgue spaces.)  

Comparable necessary and sufficient results are not known for fractional maximal operators, Riesz potentials, or fractional CZOs. The goal of this paper, therefore, is to begin to address this question by establishing 
necessary conditions for fractional maximal operators and fractional CZOs to be bounded on the variable Lebesgue spaces. 
We first consider the bounds on the oscillation of the exponents.  It has been shown that if the Hardy-Littlewood maximal operator is bounded on $L^{\pp}(\R^n)$, then $p_- > 1$ \cite[Theorem 3.19]{VLS}. Our first result extends this to the fractional maximal operators. 
\begin{theorem} \label{Maximal Bounded Exponent Theorem}
    Fix $\alpha$, $0 \leq \alpha < n$, and let $\pp \in \mathcal{P}(\R^n)$.  Define $\qq \in \mathcal{P}(\R^n)$ by \eqref{q dot defn}.  If $M_\alpha : L^{\pp}(\R^n) \to L^{\qq}(\R^n)$, then $p_- > 1$. 
\end{theorem}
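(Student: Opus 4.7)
I argue by contradiction. Assume $M_\alpha:\Lp\to\Lq$ boundedly but $p_-=1$. A strong $(\pp,\qq)$ inequality implies the corresponding weak $(\pp,\qq)$ inequality, so the preceding theorem gives $\pp\in K_0^\alpha(\R^n)$, and in particular
\[
|Q|^{\alpha/n}\|\chi_Q\|_{\Lq} \le C\|\chi_Q\|_{\Lp}
\qquad\text{for every cube } Q\subset\R^n.
\]
The plan is to produce a sequence of test functions $\{g_k\}$ with $\|g_k\|_{\Lp}$ uniformly bounded while $\|M_\alpha g_k\|_{\Lq}\to\infty$, contradicting strong-type.

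Using $p_-=1$, for each $k\in\N$ pick a Lebesgue density point of the set $\{p<1+1/k\}$ and a small cube $Q_k$ centered there so that $F_k:=Q_k\cap\{p<1+1/k\}$ satisfies $|F_k|\ge|Q_k|/2$. On $F_k$ the exponent is uniformly close to $1$, and one has $q(x)\le q_k:=(1+1/k)/(1-\alpha(1+1/k)/n)$ with $q_k\to n/(n-\alpha)$ as $k\to\infty$. Now invoke the classical fact that the operator norm $\|M_\alpha\|_{L^{1+1/k}\to L^{q_k}}$ blows up as $k\to\infty$, reflecting the endpoint failure of $M_\alpha$ from $L^1$ into $L^{n/(n-\alpha)}$. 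Take $g_k$ supported in a sub-cube of $F_k$, modeled on the classical extremizer $|x-x_k|^{\alpha-n}$ truncated and normalized so that $\|g_k\|_{L^{1+1/k}}\lesssim 1$ while $\|M_\alpha g_k\|_{L^{q_k}}\to\infty$.

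The final step compares the variable and constant-exponent norms. Because $p\le 1+1/k$ on $\supp(g_k)\subseteq F_k$, the pointwise bound $|g_k|^{p(x)}\le|g_k|^{1+1/k}+\chi_{F_k}(x)$ yields $\|g_k\|_{\Lp}\lesssim\|g_k\|_{L^{1+1/k}}+|F_k|^{1/p_+}$, uniformly bounded in $k$. In the reverse direction, $K_0^\alpha$ together with $q(x)\le q_k$ on $F_k$ allows a comparison $\|M_\alpha g_k\|_{\Lq}\gtrsim\|M_\alpha g_k\|_{L^{q_k}(F_k)}$ up to a uniform constant; combined with the classical blow-up this gives $\|M_\alpha g_k\|_{\Lq}\to\infty$, a contradiction. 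The main obstacle is rigorously justifying this last comparison: the $\Lq$ norm of $M_\alpha g_k$ over $\R^n$ depends on the behavior of $p$ far from $F_k$, which is not controlled by the hypothesis $p_-=1$. The argument must therefore confine attention to $F_k$ via the $K_0^\alpha$ condition and rule out interactions between the near and far portions of $M_\alpha g_k$ from obstructing the blow-up.
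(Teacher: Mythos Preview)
Your overall strategy---contradiction, localize near $p=1$ via Lebesgue density, exploit $K_0^\alpha$---matches the paper's. But two points are genuine gaps.

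First, the test function is confused. The function $|x-x_k|^{\alpha-n}$ is the \emph{output} $M_\alpha\chi_{Q_0}$, not the input; the right test function is simply the characteristic function of a small cube inside $F_k$, and the blow-up comes from the slow decay of $M_\alpha\chi_{Q_0}$ \emph{away} from $Q_0$, not from large values on $\supp g_k$.

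Second, the obstacle you name is exactly the crux, and you have not resolved it. The comparison $\|M_\alpha g_k\|_{\Lq}\gtrsim\|M_\alpha g_k\|_{L^{q_k}(F_k)}$ fails in general: when $M_\alpha g_k<1$ (which is where the blow-up lives, since the relevant values decay like $2^{j(\alpha-n)}$) the inequality $t^{q(x)}\ge t^{q_k}$ goes the wrong way. Nor does restricting to $F_k$ suffice, since you also need $\|g_k\|_{\Lp}$ controlled, which requires comparing $\|\chi_{Q_0}\|_{\Lp}$ to quantities computed on the larger region where $M_\alpha g_k$ is evaluated. The paper handles both issues by (i) choosing the support cube $Q_1^k$ to have \emph{minimal harmonic mean} $p_{Q_1^k\cap E_k}$ among all sub-cubes of its radius (Lemma~3.10), (ii) using the $K_0^\alpha$-consequence $\|\chi_E\|_\pp\approx|E|^{1/p_E}$ to convert norms into harmonic means, and (iii) building a dyadic chain of disjoint cubes $P_j^k$ inside $D_k$ at scales $2^j r_k$ so that the modular of $M_\alpha f_k$ picks up a contribution $\gtrsim 1$ from each $P_j^k$, summing to $k$. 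The minimality of the harmonic mean is what lets you pass from $\|\chi_{Q_1^k\cap E_k}\|_\pp$ to $|P_j^k|^{1/q_{P_j^k\cap E_k}}\approx\|\chi_{P_j^k\cap E_k}\|_\qq$ with the right exponent; without it, the variable exponent on the $P_j^k$ could defeat the comparison. Your sketch has the skeleton but is missing this mechanism entirely.
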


\begin{remark}
The proof of this result for the maximal operator (i.e., when $\alpha=0$) is relatively straightforward.  However, this proof fails to work when $\alpha>0$; consequently, a different and significantly more difficult proof is required.
\end{remark}

Likewise, it is known that if all $n$ of the Riesz transforms are bounded on $L^{\pp}(\R^n)$, then $1 < p_- \leq p_+ < \infty$ \cite[Theorem 5.42]{VLS}. We prove that for $0 \leq \alpha < n$, it suffices to have that a single non-degenerate fractional CZO (see Definitions~\ref{CZ operator} and~\ref{non-degenerate kernel}) is bounded to get the same conclusion.  

\begin{theorem} \label{SIO Bounded Exponent Theorem}
    Fix $0 \leq \alpha < n$, and let $\pp \in \mathcal{P}(\R^n)$.  Define $\qq \in \mathcal{P}(\R^n)$ by \eqref{q dot defn}.
    Let $T_\alpha$ be a fractional Calder\'on-Zygmund operator with a non-degenerate kernel. If  $T_\alpha : L^{\pp}(\R^n) \to L^{\qq}(\R^n)$, then $1 < p_- \leq p_+ < \frac{n}{\alpha}$.
\end{theorem}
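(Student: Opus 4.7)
The plan is to leverage the pointwise estimate relating $T_\alpha$ to $M_\alpha$ along a carefully chosen family of cubes (announced in the abstract and established earlier in the paper) to transfer Theorem~\ref{Maximal Bounded Exponent Theorem} from $M_\alpha$ to $T_\alpha$, and then to apply a duality argument to pick up the upper bound on $p_+$.

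\textbf{Lower bound $p_->1$.}  The non-degeneracy hypothesis on the kernel of $T_\alpha$ should, via the pointwise estimate, produce for each cube $Q$ a comparable companion cube $\widetilde Q$, disjoint from $Q$, on which
\[
|Q|^{\alpha/n}\dashint_Q f(y)\,dy \;\lesssim\; |T_\alpha(f\chi_Q)(x)|, \qquad x\in\widetilde Q,
\]
for all nonnegative $f$.  Testing with $f=\chi_Q$ and using $T_\alpha:\Lp\to\Lq$ yields
\[
|Q|^{\alpha/n}\|\chi_{\widetilde Q}\|_{\Lq} \;\lesssim\; \|\chi_Q\|_{\Lp},
\]
which is exactly the characteristic-function inequality encoded by the $K_0^\alpha$ condition for $M_\alpha$.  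Feeding this estimate into the proof of Theorem~\ref{Maximal Bounded Exponent Theorem}---with $M_\alpha$ replaced by $T_\alpha$ through the pointwise estimate, and using $\|\chi_Q\|_{\Lp}\sim\|\chi_{\widetilde Q}\|_{\Lp}$ since the two cubes have comparable size and location---produces $p_->1$.

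\textbf{Upper bound $p_+<n/\alpha$ by duality.}  The formal adjoint $T_\alpha^*$ is a fractional Calder\'on-Zygmund operator with transposed kernel $K^*(x,y)=K(y,x)$; this transposed kernel remains non-degenerate (after relabeling the cone in Definition~\ref{non-degenerate kernel}).  By the associate-space duality for variable Lebesgue spaces \cite{VLS}, the hypothesis $T_\alpha:\Lp\to\Lq$ gives $T_\alpha^*:L^{\cqq}\to L^{\cpp}$, and
\[
\frac{1}{q'(x)}-\frac{1}{p'(x)} \;=\; \frac{1}{p(x)}-\frac{1}{q(x)} \;=\; \frac{\alpha}{n},
\]
so the pair $(\cqq,\cpp)$ satisfies the same fractional Sobolev relation.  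Applying the lower-bound step to $T_\alpha^*$ yields $(q'(\cdot))_->1$, equivalently $q_+<\infty$.  Since $\frac{1}{p(x)}=\frac{\alpha}{n}+\frac{1}{q(x)}\geq\frac{\alpha}{n}+\frac{1}{q_+}$, this forces $p_+<n/\alpha$.

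The main obstacle is the first step.  One must verify that the cube family produced by the pointwise estimate is rich enough (arbitrary centre and side length) to support the same test-function strategy used for $M_\alpha$ in Theorem~\ref{Maximal Bounded Exponent Theorem}, and that the transition from $T_\alpha(f\chi_Q)(x)$-bounds on $\widetilde Q$ to characteristic-function norm inequalities on $Q$ costs at most a bounded factor, so that the $K_0^\alpha$-type conclusion survives.  Once this is in hand, the duality passage to $p_+<n/\alpha$ is essentially formal.
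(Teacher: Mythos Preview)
Your approach to the lower bound $p_->1$ matches the paper's proof: Lemma~\ref{Ptwise Fractional SIO Average Estimate Lemma} supplies the pointwise estimate on $(t,u_1)$ pairs, and the construction from the proof of Theorem~\ref{Maximal Bounded Exponent Theorem} (which was deliberately written using an arbitrary direction $u_1$ for exactly this purpose) carries over verbatim. One small correction: the comparison $\|\chi_Q\|_{\pp}\sim\|\chi_{\widetilde Q}\|_{\pp}$ does not follow merely from ``comparable size and location''---the exponent may vary wildly between the two cubes---but rather from the $K_0^\alpha$ condition (Theorem~\ref{SIO K_0^alpha Theorem}) combined with the harmonic-mean norm formula (Corollary~\ref{fractional SIO strong type gives norm equivalence corollary}), exactly as in~\eqref{Q_1^k to P_j^k norm fractional}.

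For the upper bound the two arguments diverge. You invoke associate-space duality globally to obtain $T_\alpha^*:L^{\cqq}\to L^{\cpp}$ and then reapply the lower-bound step to $T_\alpha^*$. The paper instead reruns the $f_k$-construction for $T_\alpha^*$ to obtain $\rho_{\cpp}(T_\alpha^* f_k)\gtrsim k$, but does \emph{not} assert global boundedness of $T_\alpha^*$ between variable spaces; rather it uses a local pairing on the cube $D_k$ (where all exponents are bounded), writing $\|T_\alpha^* f_k\|_{L^{\cpp}(D_k)}\lesssim\int_{D_k} f_k\,(T_\alpha h)\,dx$ for a suitable $h\in L^{\pp}(D_k)$, and closes with the original hypothesis on $T_\alpha$. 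Your route is cleaner, but calling it ``essentially formal'' understates one point: under the contradiction hypothesis $p_+=n/\alpha$ one has $q_+=\infty$, so $(L^{\qq})^*\not\cong L^{\cqq}$ and the Banach-space adjoint does not land in $L^{\cpp}$. What survives is the K\"othe (associate-space) adjoint, and one must check that the transposed-kernel operator agrees with it on a dense class and that bounded compactly supported functions are norming---all true here since $(q')_+<\infty$, but worth stating. The paper's local argument sidesteps this entirely.
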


\begin{remark}
We note that Theorem~\ref{SIO Bounded Exponent Theorem} is new even for classical CZOs (i.e., when $\alpha=0$), since the conclusion follows if, for example, a single Riesz transform is bounded.  
\end{remark}

An important regularity condition on exponent functions for the boundedness of the maximal operator and CZOs is the $K_0(\R^n)$ condition. 
    Given $\pp \in \mathcal{P}(\R^n)$, we say that $\pp \in K_0(\R^n)$ if
    \[ \Kconst{\pp}{\R^n} = \sup_{Q} |Q|^{-1} \lVert \chi_Q \rVert_{\pp} \lVert \chi_Q \rVert_{p'(\cdot)} < \infty, \]
    where the supremum is taken over cubes $Q \subset \R^n$ with sides parallel to the coordinate axes. 
In \cite[Corollary 4.50, Theorem 5.42]{VLS} it was shown that if the Hardy-Littlewood maximal operator or all $n$ of the Riesz transforms satisfy weak $(\pp,\pp)$ bounds, then $\pp \in K_0(\R^n)$.   This condition is analogous to the Muckenhoupt $A_p$ condition, mentioned above; to see this, note that we can rewrite this condition as follows:  $w\in A_p$ if
\[ \sup_Q |Q|^{-1} \|w^{\frac{1}{p}}\chi_Q\|_p \|w^{-\frac{1}{p}}\chi_Q\|_{p'} < \infty.   \]
However, there is a crucial difference:  while the $A_p$ condition is necessary and sufficient for the boundedness of maximal operators and non-degenerate CZOs, the $K_0$ condition is not sufficient for the maximal operator to be bounded on $\Lp$ (see~\cite[Example~4.5.1]{VLS}).

Our next two results extend this fractional operators.  Given $0 \leq \alpha <n$, we say that $\pp \in K_0^\alpha (\R^n)$ if
    \[ \Kalconst{\pp}{\R^n} = \sup_{Q} |Q|^{\frac{\alpha}{n}-1} \lVert \chi_Q \rVert_{\pp} \lVert \chi_Q \rVert_{p'(\cdot)} < \infty.  \]

\begin{theorem} \label{Maximal K_0^alpha theorem}
    Fix $\alpha$, $0 \leq \alpha < n$, and let $\pp \in \mathcal{P}(\R^n)$. Define $q(\cdot) \in \mathcal{P}(\R^n)$ by \eqref{q dot defn}. If  $M_\alpha : L^{\pp}(\R^n) \to L^{q(\cdot),\infty}(\R^n)$, then $\pp \in K_0^\alpha(\R^n)$.
\end{theorem}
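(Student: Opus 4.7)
My approach is a standard Muckenhoupt-style testing/duality argument, adapted to the variable Lebesgue setting. Fix a cube $Q \subset \R^n$. The key pointwise observation is that for any nonnegative $g$ with $\supp g \subset Q$ and any $x \in Q$, the cube $Q$ itself is admissible in the supremum defining $M_\alpha g(x)$, yielding
\[
M_\alpha g(x) \;\geq\; |Q|^{\frac{\alpha}{n}-1} \int_Q g(y)\,dy, \qquad x \in Q.
\]
To align this lower bound with the $K_0^\alpha$ quantity, I would invoke the norm conjugate formula for variable Lebesgue spaces---the equivalence between the Luxemburg norm on $L^{p'(\cdot)}$ and the corresponding associate norm---to produce $g \geq 0$ with $\supp g \subset Q$, $\lVert g \rVert_{\pp} \leq 1$, and $\int_Q g(y)\,dy \gtrsim \lVert \chi_Q \rVert_{p'(\cdot)}$. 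Combining, $M_\alpha g(x) \gtrsim |Q|^{\alpha/n - 1}\lVert \chi_Q \rVert_{p'(\cdot)}$ for every $x \in Q$.

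Next, let $\lambda$ equal half this pointwise lower bound, so $Q \subset \{x : M_\alpha g(x) > \lambda\}$. The assumed weak $(\pp,\qq)$ inequality for $M_\alpha$ then gives
\[
\lambda\, \lVert \chi_Q \rVert_{\qq} \;\leq\; \lambda\, \lVert \chi_{\{M_\alpha g > \lambda\}} \rVert_{\qq} \;\leq\; C \lVert g \rVert_{\pp} \;\leq\; C,
\]
which rearranges to $|Q|^{\alpha/n - 1}\, \lVert \chi_Q \rVert_{\qq}\, \lVert \chi_Q \rVert_{p'(\cdot)} \lesssim 1$ uniformly in $Q$. The desired $\pp \in K_0^\alpha(\R^n)$ conclusion then follows, invoking the preliminary identities from the paper that relate $\lVert \chi_Q \rVert_{\pp}$ to $|Q|^{\alpha/n}\lVert \chi_Q \rVert_{\qq}$ (which can also be obtained by separately testing the weak-type inequality on the characteristic function $\chi_Q$, since $M_\alpha \chi_Q(x) \geq |Q|^{\alpha/n}$ for $x \in Q$, and combining with H\"older's inequality for variable exponents).

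The main technical step requiring care is the invocation of the norm conjugate formula: I must produce a single $g$ supported strictly inside $Q$, with honest control on both $\int_Q g$ and $\lVert g \rVert_{\pp}$ simultaneously, and with constants independent of $Q$. This is standard in the variable Lebesgue literature but is the only delicate step in an otherwise direct calculation; everything else is a transparent two-line rearrangement of the weak-type hypothesis.
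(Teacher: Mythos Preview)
Your argument is correct and is essentially the same as the paper's, just organized differently. The paper first shows that the weak-type hypothesis forces the averaging operators $A_Q^\alpha f = |Q|^{\alpha/n}(\dashint_Q f)\,\chi_Q$ to be uniformly bounded $L^\pp \to L^\qq$, and then invokes Proposition~\ref{K_0^alpha and A_Q^alpha proposition} (whose proof contains precisely the duality step you spell out) to conclude $\pp \in K_0^\alpha(\R^n)$; you simply inline that duality step and skip the intermediate operator statement.

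One remark: your final paragraph introduces an unnecessary detour. The inequality you have already obtained,
\[
|Q|^{\frac{\alpha}{n}-1}\,\lVert \chi_Q \rVert_{p'(\cdot)}\,\lVert \chi_Q \rVert_{\qq} \lesssim 1 \quad\text{uniformly in }Q,
\]
\emph{is} the $K_0^\alpha$ condition as formulated in Definition~\ref{K_0^alpha definition} (note that the working definition in Section~\ref{Averagin Operators and the K_0^alpha Condition} uses $\lVert \chi_Q\rVert_{p'(\cdot)}\lVert \chi_Q\rVert_{\qq}$, not $\lVert \chi_Q\rVert_{\pp}\lVert \chi_Q\rVert_{p'(\cdot)}$ as in the informal statement in the Introduction). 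So no additional identity relating $\lVert \chi_Q\rVert_\pp$ to $|Q|^{\alpha/n}\lVert \chi_Q\rVert_\qq$ is needed; you are done at step~5.
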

\begin{theorem} \label{SIO K_0^alpha Theorem}
Fix $\alpha$, $0 \leq \alpha < n$, and let $\pp \in \mathcal{P}(\R^n)$. Define $q(\cdot) \in \mathcal{P}(\R^n)$ by \eqref{q dot defn}.  Let $T_\alpha$ be a fractional Calder\'on-Zygmund operator with a non-degenerate kernel. If $T_\alpha : L^{p(\cdot)}(\R^n) \to L^{\qq, \infty}(\R^n)$,  then $p(\cdot) \in K_0^\alpha(\R^n)$. 
\end{theorem}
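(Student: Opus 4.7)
The plan is to adapt the strategy of Theorem~\ref{Maximal K_0^alpha theorem}. The key input there was the trivial pointwise bound $M_\alpha h(x) \geq |Q|^{\alpha/n-1}\int_Q h$ for $x \in Q$ and nonnegative $h$ supported on $Q$. For $T_\alpha$ the analogue is more delicate: the natural lower bound must come from the non-degeneracy of the kernel, and it holds not on $Q$ itself but on a nearby companion cube.

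Given a cube $Q$, I would first use the non-degeneracy of $K_\alpha$ to produce a shadow cube $\tilde Q$ with $\ell(\tilde Q) = \ell(Q)$, at distance $\sim \ell(Q)$ from $Q$, such that $K_\alpha(x,y)$ has constant sign and $|K_\alpha(x,y)| \gtrsim \ell(Q)^{\alpha-n}$ on $\tilde Q \times Q$. For any nonnegative $h \in \Lp$ supported on $Q$, this immediately gives
\[
|T_\alpha h(x)| \;\geq\; c\,|Q|^{\alpha/n-1}\int_Q h(y)\,dy \qquad \text{for all } x \in \tilde Q,
\]
so $|T_\alpha h| \geq c|Q|^{\alpha/n-1}(\int_Q h)\chi_{\tilde Q}$ pointwise a.e. This is the $T_\alpha$ analogue of the elementary lower bound used for $M_\alpha$.

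Next, the weak-type hypothesis, combined with the identity $\|a\chi_E\|_{\qq,\infty} = a\|\chi_E\|_\qq$ for scalars $a>0$ and the monotonicity of the weak quasi-norm, yields
\[
c\,|Q|^{\alpha/n-1}\Big(\int_Q h\Big)\|\chi_{\tilde Q}\|_\qq \;\leq\; \|T_\alpha h\|_{\qq,\infty} \;\leq\; C\|h\|_\pp.
\]
Taking the supremum over nonnegative $h$ supported on $Q$ with $\|h\|_\pp \leq 1$, and invoking the Hölder-type duality $\sup_h \int_Q h \sim \|\chi_Q\|_{\cpp}$ in variable Lebesgue spaces, I obtain
\[
|Q|^{\alpha/n-1}\|\chi_{\tilde Q}\|_\qq\,\|\chi_Q\|_{\cpp} \;\lesssim\; 1.
\]

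The main obstacle, and where the genuine work lies, is to convert this into the $K_0^\alpha$ condition, which is formulated in terms of $Q$ alone. My plan is to exploit the flexibility of the shadow-cube construction: a non-degenerate kernel admits shadow cubes in several directions and at various positions, so running the argument symmetrically with $\tilde Q$ in the role of $Q$ produces a companion estimate with $Q$ and $\tilde Q$ interchanged in the norm factors. Chaining together a carefully chosen family of translated shadows should then allow one to relate $\|\chi_{\tilde Q}\|_\qq$ back to $\|\chi_Q\|_\qq$ (and hence to $\|\chi_Q\|_\pp$ via the defining relation $\tfrac{1}{p(\cdot)} - \tfrac{1}{q(\cdot)} = \tfrac{\alpha}{n}$). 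This transfer step is presumably the content of the ``powerful pointwise estimate \dots along a carefully chosen family of cubes'' referenced in the abstract, and once available it reduces Theorem~\ref{SIO K_0^alpha Theorem} essentially to Theorem~\ref{Maximal K_0^alpha theorem}.
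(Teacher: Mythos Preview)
Your overall strategy is correct and matches the paper's: produce a companion cube $\tilde Q$ (the paper calls $Q,\tilde Q$ a $(t,u_1)$ pair), get the pointwise lower bound $|T_\alpha h| \gtrsim |Q|^{\alpha/n-1}(\int_Q h)\chi_{\tilde Q}$ from the non-degeneracy and smoothness of $K_\alpha$, and feed this into the weak-type hypothesis. Your intermediate estimate
\[
|Q|^{\frac{\alpha}{n}-1}\Big(\int_Q h\Big)\,\|\chi_{\tilde Q}\|_{\qq} \;\lesssim\; \|h\|_{\pp}
\]
is exactly what the paper obtains.

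The transfer step, however, is simpler than you suggest, and your reference to the abstract is a misreading. The ``carefully chosen family of cubes'' is the machinery for Theorem~\ref{SIO Bounded Exponent Theorem} (the $p_->1$ result), not for this theorem. Here no chaining is needed. The paper's trick is to apply the generalized H\"older inequality $\|h\|_{\pp} \lesssim \|\chi_Q\|_{n/\alpha}\|h\|_{\qq} = |Q|^{\alpha/n}\|h\|_{\qq}$ (valid for $h$ supported in $Q$, since $\tfrac{1}{\pp}=\tfrac{\alpha}{n}+\tfrac{1}{\qq}$) to the displayed inequality, obtaining
\[
\Big(\dashint_Q h\Big)\,\|\chi_{\tilde Q}\|_{\qq} \;\lesssim\; \|h\|_{\qq}.
\]
Taking $h=\chi_Q$ gives $\|\chi_{\tilde Q}\|_{\qq}\lesssim\|\chi_Q\|_{\qq}$; running the identical argument with the roles of $Q$ and $\tilde Q$ reversed (which is legitimate since $\tilde Q,Q$ form a $(-t,u_1)$ pair) gives the opposite inequality. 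Thus $\|\chi_{\tilde Q}\|_{\qq}\approx\|\chi_Q\|_{\qq}$ from a single symmetry step, and plugging back into your displayed estimate (or equivalently into the averaging-operator formulation) finishes the proof. Your mention of the relation $\tfrac{1}{\pp}-\tfrac{1}{\qq}=\tfrac{\alpha}{n}$ is exactly the right ingredient; you just need to use it \emph{before} passing to the supremum, not after.
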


The $K_0^\alpha$ condition is analogous to the $A_{p,q}$ condition of Muckenhoupt and Wheeden, but again, while the $A_{p,q}$ condition is necessary and sufficient for fractional maximal operators and fractional CZOs to be bounded on weighted spaces, this is not the case in variable Lebesgue spaces and we construct explicit examples of exponents in $K_0^\alpha(\R^n)$ to show this.

\medskip

The remainder of the paper is organized as follows. In Section~\ref{Preliminaries} we give some preliminary definitions and results on the variable Lebesgue spaces and the fractional operators we are interested in.  In Section~\ref{Averagin Operators and the K_0^alpha Condition} we define the fractional averaging operators and show that their boundedness is characterized by the $K_0^\alpha(\R^n)$ condition. This result linking averaging operators and the $K_0^\alpha$ condition is our main tool in proving Theorems~\ref{Maximal K_0^alpha theorem} and~\ref{SIO K_0^alpha Theorem}. We conclude this section with some  consequences of the $K_0^\alpha(\R^n)$ condition needed in our proofs. In Section~\ref{Fractional Maximal Operators} we first prove Theorem~\ref{Maximal K_0^alpha theorem}; then, as a corollary, we show that  $M_\alpha : L^{\pp} \to L^{\qq}$ implies $\pp \in K_0^\alpha(\R^n)$. Using this fact,  we next prove Theorem \ref{Maximal Bounded Exponent Theorem}. We motivate the proof, which is very technical, by looking at the constant exponent case first.   In Section~\ref{Fractional Singular Integral Operators} we prove Theorems~\ref{SIO K_0^alpha Theorem} and~\ref{SIO Bounded Exponent Theorem}. To do so, we first prove a pointwise estimate that relates fractional CZOs to the fractional maximal operator along a carefully chosen family of cubes. This allows us to adapt much of the machinery developed in Section~\ref{Fractional Maximal Operators} to the proof for fractional CZOs.  We note that the proofs for the fractional maximal operator could be simplified, but at the cost of repeating the more complicated construction we actually use in this section.  Finally, in Section~\ref{Necessary but Not Sufficient} we show that the $K_0^\alpha$ condition is not sufficient for $M_\alpha$  to be bounded. We also prove some additional results that show the relationship between the $K_0$ and $K_0^\alpha$ conditions and some of their properties. 

\medskip

Throughout this paper we will use the following notation. The variable $n$ will always denote the dimension of Euclidean space, $\R^n$. Unless we specify otherwise, cubes in $\R^n$ will  be assumed to have their sides parallel to the coordinate axes.  $Q(x,r)$ denotes the cube with center $x \in \R^n$ and side length $2r$. We refer to the parameter $r$ as the radius of $Q(x,r)$. Likewise, $B(x,r)$ denotes the ball with center $x$ and radius $r$. Constants may shift from line to line, and in many cases we will display dependence on a parameter $p$ by writing $C(p)$. We will write $A\lesssim B$ if there exists $c>0$ such that $A\leq cB$, and $A\approx B$ if $A\lesssim B$ and $B\lesssim A$. Given a  measurable function $f$ and a measurable set $E$, $0 < |E| < \infty$, we define the integral average of $f$ on $E$ by
\[ \dashint_{E} f(x) \, dx = \frac{1}{|E|} \int_{E} f(x) \, dx. \]
 Finally, for variable Lebesgue spaces we will use the notation from~\cite{VLS}.

%
%
\section{Preliminaries} \label{Preliminaries}

In this section we define the variable Lebesgue spaces and the operators we will be considering in subsequent sections.

\subsection{The Variable Lebesgue Spaces}
We begin with the definition of the variable Lebesgue spaces and give some of their fundamental properties.   Results are given without proof, and can be found in \cite{VLS,Diening}. 
%
%
%
\begin{definition}
    Given a set $\Omega\subset \rn$, let $\mathcal{P}(\Omega)$ be the set of all Lebesgue measurable functions $\pp : \Omega \to [1,\infty]$. The elements of $\mathcal{P}(\Omega)$ are called exponent functions. 
\end{definition}
Given $\pp \in \mathcal{P}(\Omega)$ and a set $E \subset \Omega$, let 
\begin{gather*}
    p_-(E) = \essinf_{x \in E} p(x), \hspace{8mm} p_+(E) = \esssup_{x \in E} p(x), \\
    \Omega_\infty^{\pp} = \{ x \in \Omega : p(x) = \infty \}, \hspace{8mm} \Omega_1^{\pp} = \{x \in \Omega : p(x) = 1\}, \\
    \Omega_*^{\pp} = \{x \in \Omega : 1 < p(x) < \infty \}.
\end{gather*}  
If the context is clear, we will  write $p_- = p_-(\Omega)$ and $p_+ = p_+(\Omega)$. We will also drop the superscript and write $\Omega_\infty,\Omega_1$, and $\Omega_*$.
\begin{definition}
    Given $\Omega$, $\pp \in \mathcal{P}(\Omega)$ and a Lebesgue measurable function $f$, define the modular associated with $\pp$ by
    \[ \rho_{\pp, \Omega}(f) = \int_{\Omega \setminus \Omega_\infty} |f(x)|^{p(x)} \, dx + \lVert f \rVert_{L^\infty(\Omega_\infty)}. \]
\end{definition}
When it is unambiguous, we may omit one or both subscripts on $\rho$. The modular allows us to define the variable Lebesgue spaces.
\begin{definition}
    Given $\Omega$ and $\pp \in \mathcal{P}(\Omega)$, define $L^{\pp}(\Omega)$ to be the set of Lebesgue measurable functions $f$ such that $\rho(f/\lambda) < \infty$ for some $\lambda > 0$. 
\end{definition}
The modular also allows us to define a norm on $L^{\pp}(\Omega)$ via a similar approach as the Luxemburg norm on Orlicz spaces. As with Orlicz spaces, this forms a Banach space. 
\begin{definition}
    Given $\Omega$ and $\pp \in \mathcal{P}(\Omega)$, if $f$ is a measurable function, define 
    \[ \lVert f \rVert_{L^{\pp}(\Omega)} = \inf \{ \lambda > 0 : \rho_{\pp,\Omega}(f/\lambda) \leq 1 \}. \]   
\end{definition}
\begin{theorem}[\cite{VLS}, Theorem 2.71]
    Given $\Omega$ and $\pp \in \mathcal{P}(\Omega)$, $\|\cdot\|_{L^{\pp}(\Omega)}$ is a norm, and $\Lp(\Omega)$ is a Banach space.
\end{theorem}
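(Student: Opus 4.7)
The plan is to verify the three norm axioms first, then establish completeness via the standard Cauchy-subsequence argument. Throughout, the key technical tool is the convexity of the modular $\rho_{\pp,\Omega}$ on measurable functions, which follows directly from the pointwise convexity of $t \mapsto t^{p(x)}$ for each $x$ (where $p(x) < \infty$) together with linearity of the $L^\infty$ term on $\Omega_\infty$.

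For the norm axioms, positivity and homogeneity are essentially immediate: if $\lambda > 0$, then $\rho(cf/\lambda) = \rho(f/(\lambda/|c|))$ by a change of variable in the infimum, giving $\|cf\|_{\pp} = |c|\|f\|_{\pp}$. To show $\|f\|_{\pp} = 0$ implies $f = 0$ a.e., I would argue that $\rho(f/\lambda) \le 1$ for all $\lambda > 0$ forces $\int_{\Omega\setminus\Omega_\infty}|f|^{p(x)}\lambda^{-p(x)}\,dx \to 0$ on sets where $p$ is bounded, and forces $\|f\|_{L^\infty(\Omega_\infty)} = 0$. The triangle inequality is the main structural step: given $f, g$ with $\lambda = \|f\|_{\pp}$ and $\mu = \|g\|_{\pp}$, write
\[
\frac{f+g}{\lambda+\mu} = \frac{\lambda}{\lambda+\mu}\cdot\frac{f}{\lambda} + \frac{\mu}{\lambda+\mu}\cdot\frac{g}{\mu},
\]
apply convexity of $\rho$ to get $\rho((f+g)/(\lambda+\mu)) \le 1$, and conclude $\|f+g\|_{\pp} \le \lambda + \mu$. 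A small approximation argument handles the case where one or both norms are attained only in the limit. The key auxiliary fact, which I would record as a lemma, is the unit-ball property: $\|f\|_{\pp} \le 1$ iff $\rho(f) \le 1$, which follows from the monotonicity of $\lambda \mapsto \rho(f/\lambda)$ and the Fatou-type lower semicontinuity obtained from monotone convergence as $\lambda$ approaches $\|f\|_{\pp}$ from above.

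For completeness, I would follow the classical $L^p$ blueprint. Let $\{f_k\} \subset L^{\pp}(\Omega)$ be Cauchy. Extract a subsequence $\{f_{k_j}\}$ with $\|f_{k_{j+1}} - f_{k_j}\|_{\pp} \le 2^{-j}$, and set
\[
g_N(x) = |f_{k_1}(x)| + \sum_{j=1}^{N}|f_{k_{j+1}}(x) - f_{k_j}(x)|, \qquad g = \lim_{N\to\infty} g_N.
\]
By the triangle inequality already proved, $\|g_N\|_{\pp} \le \|f_{k_1}\|_{\pp} + 1$ uniformly in $N$. Applying the unit-ball property (and Fatou's lemma on $\Omega \setminus \Omega_\infty$ together with the monotone limit on $\Omega_\infty$) to $g/(\|f_{k_1}\|_{\pp}+1)$ gives $\rho(g/(\|f_{k_1}\|_{\pp}+1)) \le 1$, so $g \in L^{\pp}(\Omega)$ and hence $g$ is finite a.e. This forces the telescoping series $\sum_j (f_{k_{j+1}} - f_{k_j})$ to converge absolutely a.e., so $f_{k_j} \to f$ pointwise a.e. for some measurable $f$. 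Finally, for fixed $\varepsilon > 0$ and large $k$, applying Fatou to $\rho((f - f_k)/\varepsilon) \le \liminf_j \rho((f_{k_j} - f_k)/\varepsilon)$ on $\Omega\setminus\Omega_\infty$, together with the obvious control on $\Omega_\infty$, yields $\|f - f_k\|_{\pp} \to 0$.

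The main obstacle I anticipate is handling the $L^\infty$ piece of the modular uniformly with the integral piece, since one cannot simply invoke monotone or dominated convergence across all of $\Omega$. I would deal with this by splitting every estimate into its contribution on $\Omega \setminus \Omega_\infty$ (where standard measure-theoretic limit theorems apply to $|f|^{p(x)}$) and on $\Omega_\infty$ (where one uses ordinary $L^\infty$ estimates and the fact that $\|\cdot\|_{L^\infty(\Omega_\infty)}$ is already known to be a complete norm). A secondary technical nuisance is that the infimum defining $\|f\|_{\pp}$ need not be attained when $p_+ = \infty$, which is why each application of the unit-ball property above is phrased as $\rho(f/\lambda) \le 1$ for $\lambda$ slightly larger than $\|f\|_{\pp}$ and then passed to the limit.
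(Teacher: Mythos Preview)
The paper does not prove this theorem at all: it is stated as a background result with an explicit citation to \cite{VLS}, and the surrounding text says that results in this subsection ``are given without proof, and can be found in \cite{VLS,Diening}.'' So there is no proof in the paper to compare against.

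That said, your proposal is the standard argument and is essentially correct as sketched. The convexity of the modular (integral part via convexity of $t\mapsto t^{p(x)}$ for $p(x)\ge 1$, plus the $L^\infty(\Omega_\infty)$ term which is already a norm) is exactly what drives the triangle inequality, and the Cauchy-subsequence/Fatou route is the usual way to get completeness. Your caution about the infimum possibly not being attained and about handling $\Omega_\infty$ separately is well placed; both issues are genuine but minor, and the workarounds you describe are the right ones. One small point worth tightening: in the homogeneity step you should explicitly dispose of the case $c=0$, and in the positivity step the argument that $\rho(f/\lambda)\le 1$ for all $\lambda>0$ forces $f=0$ a.e.\ is easiest to phrase by first treating $\Omega_\infty$ (where it gives $\|f\|_{L^\infty(\Omega_\infty)}\le\lambda$ for all $\lambda>0$) and then, on $\Omega\setminus\Omega_\infty$, letting $\lambda\to 0$ and using monotone convergence on each set $\{x:p(x)\le N\}$.
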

As is often done in the classical Lebesgue spaces, when the domain is understood we will write $\lVert f \rVert_{\pp} = \lVert  f \rVert_{L^{\pp}(\Omega)}$. Similar to the classical Lebesgue spaces, we  define the conjugate exponent function of $\pp$ pointwise: for $x \in \Omega$,
\begin{gather} \label{conjugate exp}
    \frac{1}{p(x)} + \frac{1}{p'(x)} = 1,
\end{gather}  
where we use the convention $\tfrac1\infty = 0$. 
H\"older's inequality is satisfied in the variable Lebesgue spaces up to a constant. Useful generalizations of H\"older's inequality hold as well. 
\begin{theorem}[\cite{VLS}, Theorem 2.26] \label{Holders}
    Given $\Omega$ and $\pp \in \mathcal{P}(\Omega)$, for all $f \in L^{\pp}(\Omega)$ and $g \in L^{p'(\cdot)}(\Omega)$, $fg \in L^1(\Omega)$ and 
    \[ \int_\Omega |f(x) g(x)| \, dx \leq K_{\pp} \lVert f \rVert_{\pp} \lVert g \rVert_{p'(\cdot)}, \]
    where
    \[ K_{\pp} = \left(\frac{1}{p_-} - \frac{1}{p_+} + 1 \right) \lVert \chi_{\Omega_*} \rVert_\infty + \lVert \chi_{\Omega_\infty} \rVert_\infty + \lVert \chi_{\Omega_1} \rVert_\infty \leq 4. \]
\end{theorem}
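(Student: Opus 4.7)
The plan is to prove H\"older's inequality by reducing to the pointwise Young inequality on the region $\Omega_*$ where $1 < p(x) < \infty$ and handling $\Omega_1$ and $\Omega_\infty$ by direct estimates. First I would assume $\lVert f \rVert_\pp > 0$ and $\lVert g \rVert_{p'(\cdot)} > 0$ (otherwise the inequality is trivial) and, by positive homogeneity of both sides, normalize so that $\lVert f \rVert_\pp = \lVert g \rVert_{p'(\cdot)} = 1$. The definition of the Luxemburg norm then forces $\rho_{\pp,\Omega}(f) \leq 1$ and $\rho_{p'(\cdot),\Omega}(g) \leq 1$, so it suffices to show that $\int_\Omega |fg|\,dx \leq K_\pp$.

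Next I would split according to $\Omega = \Omega_1 \cup \Omega_* \cup \Omega_\infty$. On $\Omega_\infty$, where $p = \infty$ and $p' = 1$, the estimate
\[
\int_{\Omega_\infty} |fg|\,dx \leq \lVert f \rVert_{L^\infty(\Omega_\infty)} \int_{\Omega_\infty} |g|\,dx \leq \rho_{\pp,\Omega}(f)\,\rho_{p'(\cdot),\Omega}(g) \leq 1
\]
follows from the definition of the modular, and the argument on $\Omega_1$ is symmetric with the roles of $f$ and $g$ reversed. On $\Omega_*$, I would apply the pointwise Young inequality $ab \leq a^{p(x)}/p(x) + b^{p'(x)}/p'(x)$ with $a = |f(x)|$, $b = |g(x)|$ and integrate to obtain
\[
\int_{\Omega_*} |fg|\,dx \leq \int_{\Omega_*} \frac{|f|^{p(x)}}{p(x)}\,dx + \int_{\Omega_*} \frac{|g|^{p'(x)}}{p'(x)}\,dx.
\]
Using the uniform bounds $1/p(x) \leq 1/p_-$ and $1/p'(x) = 1 - 1/p(x) \leq 1 - 1/p_+$, valid on $\Omega_*$, together with $\rho_{\pp,\Omega}(f), \rho_{p'(\cdot),\Omega}(g) \leq 1$, the right-hand side is bounded by $\tfrac{1}{p_-} - \tfrac{1}{p_+} + 1$.

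Finally I would sum the three contributions, each multiplied by the corresponding factor $\lVert \chi_{\Omega_*} \rVert_\infty$, $\lVert \chi_{\Omega_1} \rVert_\infty$, or $\lVert \chi_{\Omega_\infty} \rVert_\infty$, each of which is $0$ or $1$ depending on whether the region has positive measure; this produces exactly the constant $K_\pp$ in the theorem. The bound $K_\pp \leq 4$ then follows from $\tfrac{1}{p_-} - \tfrac{1}{p_+} + 1 \leq 2$ and the trivial bound of $1$ on each characteristic-function norm. I do not expect a genuine obstacle here: the only point requiring care is the bookkeeping across the three regions and noticing that the crude estimates $1/p(x) \leq 1/p_-$ and $1/p'(x) \leq 1 - 1/p_+$ combine to give precisely the sharp constant displayed in the statement.
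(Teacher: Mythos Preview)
The paper does not actually prove this statement: it is quoted from \cite{VLS} in the preliminaries section, where the authors explicitly say ``Results are given without proof, and can be found in \cite{VLS,Diening}.'' So there is no proof in the paper to compare against.

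That said, your argument is correct and is essentially the standard proof one finds in the cited reference: normalize, split $\Omega$ into $\Omega_1 \cup \Omega_* \cup \Omega_\infty$, apply pointwise Young on $\Omega_*$, and use the obvious $L^\infty$--$L^1$ pairing on the other two pieces. The bookkeeping with the characteristic functions $\chi_{\Omega_*}$, $\chi_{\Omega_1}$, $\chi_{\Omega_\infty}$ is exactly what accounts for which terms appear in $K_{\pp}$, and your observation that $1/p(x) \leq 1/p_-$ and $1/p'(x) \leq 1 - 1/p_+$ on $\Omega_*$ yields the displayed constant is the right one. One minor point of care: when you write $\int_{\Omega_\infty} |fg| \leq \rho_{\pp}(f)\,\rho_{p'(\cdot)}(g)$, what you actually use is that each factor is separately $\leq 1$ (since each modular is a sum of nonnegative terms bounded by $1$), not that their product bounds the integral; but your conclusion is unaffected.
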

\begin{remark}
    Since
    \[ \frac{1}{(p')_-} - \frac{1}{(p')_+} 
    = \frac{1}{(p_+)'} - \frac{1}{(p_-)'} 
    = \left(1 - \frac{1}{p_+}\right) - \left(1 - \frac{1}{p_-}\right) = \frac{1}{p_-} - \frac{1}{p_+}, \]
    $K_{\pp} = K_{p'(\cdot)}$ for all $\pp \in \mathcal{P}(\R^n)$.
\end{remark}
\begin{corollary}[\cite{VLS}, Corollary 2.28] \label{gen Holder}
    Given $\Omega$ and exponent functions $r(\cdot), q(\cdot) \in \mathcal{P}(\Omega)$, define $\pp \in \mathcal{P}(\Omega)$ by 
    \[ \frac{1}{p(x)} = \frac{1}{q(x)} + \frac{1}{r(x)}. \]
    Then there exists a constant $K$, $K \leq 5$, such that for all $f \in L^{\qq}(\Omega)$ and $g \in L^{r(\cdot)}(\Omega)$, $fg \in L^{\pp}(\Omega)$ and 
    \[ \lVert fg \rVert_{\pp} \leq K \lVert f \rVert_{\qq} \lVert g \rVert_{r(\cdot)}. \]
\end{corollary}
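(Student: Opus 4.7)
The plan is to prove the inequality via the modular together with a pointwise Young inequality driven by the relation $p(x)/q(x) + p(x)/r(x) = 1$. By positive homogeneity I may normalize so that $\|f\|_\qq \leq 1$ and $\|g\|_{r(\cdot)} \leq 1$; the goal then reduces to exhibiting some $K \leq 5$ with $\rho_{\pp,\Omega}(fg/K) \leq 1$, as this forces $\|fg\|_\pp \leq K$, and undoing the scaling gives the claim. Under this normalization I have $\rho_\qq(f) \leq 1$ and $\rho_{r(\cdot)}(g) \leq 1$; in particular the essential suprema $\|f\|_{L^\infty(\Omega_\infty^\qq)}$ and $\|g\|_{L^\infty(\Omega_\infty^{r(\cdot)})}$ are each at most $1$.

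I then split $\Omega$ into four pieces according to where $\qq$ and $r(\cdot)$ are infinite: $A_1 = \{q = \infty,\, r = \infty\}$, $A_2 = \{q = \infty,\, r < \infty\}$, $A_3 = \{q < \infty,\, r = \infty\}$, and $A_4 = \{q < \infty,\, r < \infty\}$. The identity $1/p(x) = 1/q(x) + 1/r(x)$ forces $p = \infty$ on $A_1$, $p = r$ on $A_2$, $p = q$ on $A_3$, and $p < \infty$ on $A_4$, so $\Omega_\infty^\pp = A_1$. On $A_4$ the quantities $q(x)/p(x)$ and $r(x)/p(x)$ are strict conjugate exponents, and pointwise Young gives
\[
|fg|^{p(x)} \leq \tfrac{p(x)}{q(x)} |f|^{q(x)} + \tfrac{p(x)}{r(x)} |g|^{r(x)} \leq |f|^{q(x)} + |g|^{r(x)}.
\]
Dividing by $K^{p(x)} \geq K$ (for $K \geq 1$) and integrating, the $A_4$ contribution to $\rho_\pp(fg/K)$ is at most $(\rho_\qq(f) + \rho_{r(\cdot)}(g))/K \leq 2/K$. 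On $A_2$, where $|f| \leq 1$ a.e.\ and $p = r$, I obtain $|fg/K|^{r(x)} \leq |g|^{r(x)}/K$, contributing at most $1/K$; the bound on $A_3$ is symmetric. Finally, on $A_1$ the $L^\infty$ piece of $\rho_\pp(fg/K)$ is bounded by $\|f\|_{L^\infty(A_1)} \|g\|_{L^\infty(A_1)}/K \leq 1/K$. Summing the four contributions gives $\rho_\pp(fg/K) \leq 5/K$, so $K = 5$ closes the argument.

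The proof is essentially bookkeeping and I do not anticipate any serious difficulty. The only point requiring care is reconciling the infinite-exponent sets $\Omega_\infty^\pp, \Omega_\infty^\qq, \Omega_\infty^{r(\cdot)}$ correctly via the relation $1/p = 1/q + 1/r$, so that the $L^\infty$ bounds on $f$ and $g$ from the normalization can be applied on the correct pieces. A slightly sharper constant could be obtained by using $K^{p(x)} \geq K^{p_-}$ on $A_4$, or by invoking the standard H\"older inequality (Theorem~\ref{Holders}) applied to $|f|^{p(\cdot)}$ and $|g|^{p(\cdot)}$ with exponents $q(\cdot)/p(\cdot)$ and $r(\cdot)/p(\cdot)$, but the bound $K \leq 5$ is exactly what is asserted.
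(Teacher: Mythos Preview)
The paper does not give its own proof of this statement; it is simply cited from \cite{VLS} (Corollary 2.28) and stated without argument. So there is nothing in the present paper to compare against.

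Your proof is correct. The normalization to $\|f\|_\qq \leq 1$, $\|g\|_{r(\cdot)}\leq 1$ is legitimate (the zero-norm case being trivial), and from $\|f\|_\qq \leq 1$ one indeed gets $\rho_\qq(f)\leq 1$, in particular $\|f\|_{L^\infty(\Omega_\infty^\qq)}\leq 1$, and likewise for $g$. The identification $\Omega_\infty^\pp = A_1$ is right, since $1/p(x)=0$ exactly when both $1/q(x)$ and $1/r(x)$ vanish. On $A_4$ your use of Young's inequality with the conjugate pair $q(x)/p(x),\,r(x)/p(x)$ is valid because both exceed $1$ there, and the bound $K^{p(x)}\geq K$ holds since $p(x)\geq 1$ and $K\geq 1$. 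The bounds on $A_2$, $A_3$, $A_1$ are straightforward as you say, and the sum $2/K + 1/K + 1/K + 1/K = 5/K$ gives $\rho_\pp(fg/5)\leq 1$, hence $\|fg\|_\pp \leq 5$.

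You are slightly generous in bounding the $A_2$ and $A_4$ contributions separately (e.g.\ $\int_{A_2}|g|^{r(x)}$ and $\int_{A_4}|g|^{r(x)}$ together are at most $\rho_{r(\cdot)}(g)\leq 1$, not $2$), so a sharper constant is available; but as you note, $K\leq 5$ is exactly the assertion.
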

\begin{corollary}[\cite{VLS}, Corollary 2.30]
    Given $\Omega$, suppose $p_1(\cdot), p_2(\cdot), \dots, p_k(\cdot) \in \mathcal{P}(\Omega)$ are  exponent functions that satisfy
    \[ \sum_{i=1}^k \frac{1}{p_i(x)} = 1, \hspace{1cm} x \in \Omega. \]
    Then there exists a constant $C$, depending on $k$ and the $p_i(\cdot)$, such that for all $f_i \in L^{p_i(\cdot)}(\Omega)$, $1 \leq i \leq k$,
    \[ \int_\Omega |f_1(x) f_2(x) \dots f_k(x)| \, dx \leq C \lVert f_1 \rVert_{p_1(\cdot)} \lVert f_2 \rVert_{p_2(\cdot)} \cdots \lVert f_k \rVert_{p_k(\cdot)}. \]
\end{corollary}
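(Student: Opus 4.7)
The plan is to prove the $k$-factor generalized Hölder inequality by induction on $k$, using Theorem~\ref{Holders} (the two-factor Hölder inequality) and Corollary~\ref{gen Holder} (the product Hölder) as the main engines. The base cases $k=1$ (trivial, with $p_1\equiv 1$) and $k=2$ (exactly Theorem~\ref{Holders}) cost nothing. For the inductive step, assume the result holds for $k-1$ and suppose $p_1(\cdot),\dots,p_k(\cdot)\in\mathcal P(\Omega)$ satisfy $\sum_{i=1}^k 1/p_i(x)=1$ pointwise.

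First I would peel off the last factor: define an auxiliary exponent $r(\cdot)\in\mathcal P(\Omega)$ by
\[ \frac{1}{r(x)} = \sum_{i=1}^{k-1}\frac{1}{p_i(x)} = 1 - \frac{1}{p_k(x)}, \]
so that $r(\cdot)=p_k'(\cdot)$ pointwise. Applying Theorem~\ref{Holders} to the pair $(f_1\cdots f_{k-1},\, f_k)$ gives
\[ \int_\Omega |f_1(x)\cdots f_k(x)|\,dx \leq K_{p_k(\cdot)}\, \lVert f_1\cdots f_{k-1}\rVert_{r(\cdot)}\, \lVert f_k\rVert_{p_k(\cdot)}, \]
which reduces the problem to bounding $\lVert f_1\cdots f_{k-1}\rVert_{r(\cdot)}$ by the product of the individual norms $\lVert f_i\rVert_{p_i(\cdot)}$ for $1\leq i\leq k-1$.

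To do this, I would iterate Corollary~\ref{gen Holder} $(k-2)$ times (or equivalently, invoke an induction hypothesis suitably phrased as a product norm bound). Define $r_{k-1}(\cdot)=p_{k-1}(\cdot)$ and, for $j=k-2,\dots,1$, let $r_j(\cdot)$ be determined by $1/r_j(x)=1/p_j(x)+1/r_{j+1}(x)$; then $r_1(\cdot)=r(\cdot)$. Corollary~\ref{gen Holder} gives
\[ \lVert f_j f_{j+1}\cdots f_{k-1}\rVert_{r_j(\cdot)} \leq K\, \lVert f_j\rVert_{p_j(\cdot)}\, \lVert f_{j+1}\cdots f_{k-1}\rVert_{r_{j+1}(\cdot)}, \]
and cascading these $k-2$ estimates from $j=k-2$ down to $j=1$ yields
\[ \lVert f_1\cdots f_{k-1}\rVert_{r(\cdot)} \leq K^{k-2}\prod_{i=1}^{k-1}\lVert f_i\rVert_{p_i(\cdot)}. \]
Substituting this back produces the claimed inequality with $C = K_{p_k(\cdot)}\cdot K^{k-2}\leq 4\cdot 5^{k-2}$, which depends only on $k$ and the $p_i(\cdot)$ (through their essential infima and suprema).

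There is no real obstacle here; the argument is a clean induction. The only point requiring mild care is that the auxiliary exponents $r_j(\cdot)$ remain in $\mathcal P(\Omega)$ (which is immediate since each $1/r_j(x)\in[0,1]$) and that the constants $K$ from Corollary~\ref{gen Holder} applied at each stage are uniformly bounded by $5$, while $K_{p_k(\cdot)}$ from Theorem~\ref{Holders} is bounded by $4$. One could alternatively formulate the induction hypothesis directly as the product-norm bound rather than the integral bound, which would avoid splitting off the factor $f_k$ with Theorem~\ref{Holders} at the end; either organization works and gives the same constant up to factors of $K$.
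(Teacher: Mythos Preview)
Your argument is correct: the induction on $k$, peeling off one factor via Theorem~\ref{Holders} and then iterating Corollary~\ref{gen Holder} to control $\lVert f_1\cdots f_{k-1}\rVert_{r(\cdot)}$, is exactly the standard proof, and your bookkeeping of the auxiliary exponents $r_j(\cdot)$ and the constants is fine. Note, however, that the paper does not actually prove this statement; it is quoted verbatim as Corollary~2.30 from \cite{VLS} and used as a black box, so there is no ``paper's own proof'' to compare against. What you have written is essentially the argument one finds in \cite{VLS}, so there is nothing to add.
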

Next we state some important duality relationships in the variable Lebesgue spaces.
\begin{theorem}[\cite{VLS}, Theorem 2.34] \label{associate norm}
    Given $\Omega$, $\pp \in \mathcal{P}(\Omega)$, and a measurable function $f$, 
    \[ k_{\pp} \lVert f \rVert_{\pp} \leq \sup \int_\Omega f(x) g(x) \, dx \leq K_{\pp} \lVert f \rVert_{\pp}, \]
    where the supremum is taken over all $g \in L^{p'(\cdot)}(\Omega)$ with $\lVert g \rVert_{p'(\cdot)} \leq 1$, and 
    \begin{gather*}
         \frac{1}{k_{\pp}} = \lVert \chi_{\Omega_\infty} \rVert_\infty + \lVert \chi_{\Omega_1} \rVert_\infty + \lVert \chi_{\Omega_*} \rVert_\infty.
    \end{gather*}
\end{theorem}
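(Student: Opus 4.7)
The plan is to prove the two inequalities separately. The upper bound $\sup \int fg \leq K_{\pp}\|f\|_{\pp}$ is immediate from H\"older's inequality (Theorem \ref{Holders}): every admissible $g$ satisfies
\[ \int_\Omega f(x)g(x)\,dx \leq K_{\pp}\|f\|_{\pp}\|g\|_{p'(\cdot)} \leq K_{\pp}\|f\|_{\pp}, \]
and taking the supremum over such $g$ finishes this direction.

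For the lower bound, the strategy is to exhibit, for each $\epsilon > 0$, a specific admissible test function $g_\epsilon$ with $\int fg_\epsilon \geq k_{\pp}\|f\|_{\pp} - \epsilon$. The cases $\|f\|_{\pp} \in \{0,\infty\}$ are trivial, so by homogeneity we may assume $\|f\|_{\pp} = 1$, and by replacing $g$ with $g\,\sgn(f)$ we may take $f \geq 0$. A preliminary observation is that $\|f\|_{\pp} = 1$ forces $\rho_{\pp}(f) = 1$: the characterization of the Luxemburg norm gives $\rho_{\pp}(f/\lambda) \leq 1$ for $\lambda > 1$ and $\rho_{\pp}(f/\lambda) > 1$ for $\lambda < 1$, and monotone/dominated convergence as $\lambda \to 1$ from each side yields equality. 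We then build $g_\epsilon$ separately on $\Omega_* \cup \Omega_1 \cup \Omega_\infty$: on $\Omega_*$ use the classical duality trick $h_*(x) = f(x)^{p(x)-1}$, which gives $fh_* = f^{p(x)}$ and $|h_*|^{p'(x)} = f^{p(x)}$ via the identity $(p(x)-1)p'(x) = p(x)$; on $\Omega_1$ (where $p'(x)=\infty$) set $h_1 \equiv 1$; and on $\Omega_\infty$ (where $p'(x)=1$) pick a set $A \subset \Omega_\infty$ with $0 < |A| < \infty$ on which $f \geq \|f\|_{L^\infty(\Omega_\infty)} - \epsilon$ and set $h_\infty = \chi_A/|A|$.

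To normalize, let $S = \|\chi_{\Omega_*}\|_\infty + \|\chi_{\Omega_1}\|_\infty + \|\chi_{\Omega_\infty}\|_\infty = k_{\pp}^{-1}$, so that $S$ counts the nonempty pieces, and put $g_\epsilon = (h_* + h_1 + h_\infty)/S$, omitting components corresponding to empty pieces. Using the elementary inequality $(1/S)^{p'(x)} \leq 1/S$, valid for $0 < 1/S \leq 1$ and $p'(x) \geq 1$, a direct computation gives
\[ \rho_{p'(\cdot)}(g_\epsilon) \leq \frac{1}{S}\left(\int_{\Omega_*} f^{p(x)}\,dx + \int_{\Omega_\infty} |h_\infty|\,dx + \|h_1\|_{L^\infty(\Omega_1)}\right) \leq \frac{S}{S} = 1, \]
since $\int_{\Omega_*} f^{p(x)} \leq \rho_{\pp}(f) = 1$, each remaining nonzero term equals $1$ by construction, and at most $S$ of the three terms are nonzero. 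Hence $\|g_\epsilon\|_{p'(\cdot)} \leq 1$. Because the supports are disjoint,
\[ \int fg_\epsilon \geq \frac{1}{S}\left(\int_{\Omega_*} f^{p(x)} + \int_{\Omega_1} f + \|f\|_{L^\infty(\Omega_\infty)} - \epsilon\right) = \frac{\rho_{\pp}(f) - \epsilon}{S} = k_{\pp}(1-\epsilon), \]
and letting $\epsilon \to 0$ completes the proof.

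The main obstacle is the delicate bookkeeping on the $\Omega_\infty$ piece, where the modular contribution enters as an $L^\infty$ norm rather than an integral, and the normalization factor $1/S$ must be chosen so that the $p'(\cdot)$-modular of the combined test function is simultaneously controlled across the three regions where $p(x)$ may equal $1$, lie in $(1,\infty)$, or equal $\infty$; the $\epsilon$-approximation on $\Omega_\infty$ is unavoidable because the essential supremum need not be attained.
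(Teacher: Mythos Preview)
The paper does not prove this theorem; it is quoted without proof from the reference~\cite{VLS}. So there is no ``paper's approach'' to compare against, and I will simply assess the correctness of your argument.

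Your upper bound is fine, and the overall strategy for the lower bound is the natural one. However, there is a genuine gap: the claim that $\|f\|_{\pp}=1$ forces $\rho_{\pp}(f)=1$ is \emph{false} in general. Your justification invokes dominated convergence as $\lambda\to 1^{-}$, but there is no integrable dominator for $|f/\lambda|^{p(x)}$ when $p(\cdot)$ is unbounded on $\Omega_*$. A concrete counterexample: take $\Omega=(0,1)$, $p(x)=1/x$, and $f=\chi_{(0,a)}$ with $0<a<1$. Then $\Omega_*=\Omega$, $\Omega_1=\Omega_\infty=\emptyset$, and for every $\lambda<1$ one has $\rho_{\pp}(f/\lambda)=\int_0^a (1/\lambda)^{1/x}\,dx=\infty$, while $\rho_{\pp}(f)=a<1$. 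Hence $\|f\|_{\pp}=1$ but $\rho_{\pp}(f)=a$. Your test function becomes $g_\epsilon=f^{p(x)-1}=\chi_{(0,a)}$ (here $S=1$), which is admissible but yields only $\int fg_\epsilon=a<1=k_{\pp}\|f\|_{\pp}$, so your construction does not reach the claimed lower bound.

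The theorem is nonetheless true in this example: taking $g_N=c_N N\chi_{(0,1/N)}$ with $c_N=N^{-1/(N-1)}$ gives $\rho_{p'(\cdot)}(g_N)\le 1$ and $\int fg_N=c_N\to 1$. The point is that when $p(\cdot)$ is unbounded one must choose test functions that concentrate where $p(x)$ is large (equivalently, where $p'(x)$ is close to~$1$), rather than the global choice $f^{p(x)-1}$. The proof in~\cite{VLS} handles this by working with the sets $\{x:p(x)>m\}$ separately and passing to a limit; your argument is correct as written only under the additional hypothesis $p_+(\Omega_*)<\infty$.
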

\begin{remark}
    Note that $k_{\pp} = k_{p'(\cdot)}$ and $k_{\pp}^{-1} \leq 3$ for all $\pp \in \mathcal{P}(\R^n)$. 
\end{remark}

For the classical $L^p$ spaces, $L^{p'} \subset (L^p)^*$, the dual space of $L^p$, with equality (up to isomorphism) if and only if $p < \infty$. A similar statement is true for the variable Lebesgue spaces.
\begin{theorem}[\cite{VLS}, Theorem 2.80]
    Given $\Omega$ and $\pp \in \mathcal{P}(\Omega)$, $L^{p'(\cdot)}(\Omega) \cong (L^{\pp}(\Omega))^*$ if and only if $p_+ < \infty$. 
\end{theorem}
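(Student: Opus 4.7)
The plan is to prove both directions, with the sufficiency requiring substantially more work.

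For the sufficiency ($p_+ < \infty$), I would first define the canonical embedding $J\colon L^{p'(\cdot)}(\Omega) \to (L^{\pp}(\Omega))^*$ by $J(g)(f) = \int_\Omega f g \, dx$. H\"older's inequality (Theorem~\ref{Holders}) gives $\|J(g)\| \leq K_{\pp}\|g\|_{p'(\cdot)}$, while the associate norm bound (Theorem~\ref{associate norm}) yields the reverse inequality $k_{\pp}\|g\|_{p'(\cdot)} \leq \|J(g)\|$, so $J$ is injective and has a bounded inverse on its range. The substantive step is surjectivity: given $\Lambda \in (L^{\pp}(\Omega))^*$, exhibit $g \in L^{p'(\cdot)}(\Omega)$ with $\Lambda = J(g)$.

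To construct $g$, I would exhaust $\Omega$ by an increasing sequence of finite-measure sets $E_k$ and, for each $k$, define a signed set function $\nu_k(F) = \Lambda(\chi_F)$ for measurable $F \subset E_k$. Because $p_+ < \infty$ forces $\|\chi_F\|_{\pp} \to 0$ as $|F| \to 0$, one checks that $\nu_k$ is countably additive and absolutely continuous with respect to Lebesgue measure, whence the Radon--Nikodym theorem produces $g_k \in L^1(E_k)$ with $\nu_k(F) = \int_F g_k$. These $g_k$ paste consistently into a measurable $g$ on $\Omega$ with $\Lambda(\chi_F) = \int_F g$ for every $F$ of finite measure. To conclude $g \in L^{p'(\cdot)}(\Omega)$, I would test $g$ against sign-corrected truncations in the unit ball of $L^{\pp}$ and invoke Theorem~\ref{associate norm} to extract the estimate $\|g\|_{p'(\cdot)} \leq k_{\pp}^{-1}\|\Lambda\|$. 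Finally, the identity $\Lambda(f) = \int fg$ holds on simple functions by linearity and extends to all of $L^{\pp}(\Omega)$ by density of bounded simple functions with compact support, which is another consequence of $p_+ < \infty$.

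For the necessity, I would assume $p_+ = \infty$. The essential case is $|\Omega_\infty| > 0$: the subspace of $L^{\pp}(\Omega)$ of functions supported in $\Omega_\infty$ is isometric to $L^\infty(\Omega_\infty)$, and a classical Hahn--Banach argument produces a functional on $L^\infty(\Omega_\infty)$ not represented by any $L^1(\Omega_\infty)$ density; extending such a functional to $L^{\pp}(\Omega)$ yields an element of $(L^{\pp}(\Omega))^*$ that cannot be of the form $J(g)$, since any such $g$ would restrict to an $L^1(\Omega_\infty)$ function. In the residual case $|\Omega_\infty| = 0$ with $p$ unbounded, a similar obstruction arises by working on the level sets $\{p > k\}$ and exploiting the failure of absolute continuity of the $\pp$-norm there.

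The main obstacle I anticipate is the surjectivity step, specifically verifying that the Radon--Nikodym density actually lies in $L^{p'(\cdot)}(\Omega)$ rather than only in $L^1_{\mathrm{loc}}$; this is where Theorem~\ref{associate norm} is indispensable, and where the hypothesis $p_+ < \infty$ plays its decisive role through both the absolute continuity of the modular and the density of simple functions. The necessity direction is conceptually cleaner, ultimately reducing to the classical failure of $(L^\infty)^* = L^1$.
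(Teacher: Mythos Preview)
The paper does not prove this theorem at all; it is stated in the Preliminaries section with a citation to \cite[Theorem~2.80]{VLS} and no proof is given. Your sketch is a reasonable outline of the standard argument (and is in the spirit of the proof in \cite{VLS}), so there is nothing in the paper to compare it against.

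One small remark on your necessity argument: the ``residual case'' $p_+=\infty$ with $|\Omega_\infty|=0$ genuinely occurs and is not handled by reducing to $(L^\infty)^*\neq L^1$. There you need a separate construction---typically one shows that bounded functions are not dense in $L^{\pp}$ when $p_+=\infty$, produces a nonzero functional annihilating their closure via Hahn--Banach, and checks that any functional of the form $J(g)$ would have to vanish on this closure only if $g=0$. Your phrase ``failure of absolute continuity of the $\pp$-norm'' points in the right direction, but as written it is not yet a proof.
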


Finally, we state the definition of Log-H\"older continuity.  We will only use the $LH_0$ condition (in Section~\ref{Necessary but Not Sufficient}), but for completeness we include the definition of $LH_\infty$.

\begin{definition} \label{defn-LH}
Given $\pp \in \Pp(\rn)$, $p_+<\infty$, we say that $\pp$ is log-H\"older continuous if it satisfies the following two conditions:
\begin{enumerate}
    \item $\pp \in LH_0(\rn)$:  there exists a constant $C_0>0$ such that 
    \[ |p(x)-p(y)| \leq \frac{C_0}{-\log(|x-y)}, \qquad |x-y|<\frac{1}{2}; \]

    \item $\pp \in LH_\infty(\rn)$:  there exists a constants $C_\infty>0$ and $1\leq p_\infty<\infty$ such that for all $x\in \rn$,
    \[ |p(x)-p_\infty| \leq \frac{C_\infty}{\log(e+|x|)}. \]
\end{enumerate}
\end{definition}

\medskip

\subsection{Operators of Interest} \label{OperInt}
We first define some notation for norm inequalities for operators on the variable Lebesgue spaces.  Let $T$ be an operator on $\Lp$. We say that it satisfies a strong $(\pp,\qq)$ inequality, and write  $T : L^\pp \rightarrow L^\qq$, if 
\[ \|T f \|_{L^\qq(\rn)} \leq C \|f\|_{L^\pp(\rn)}.
\]
Denote the infimum of all such constants $C$ by $\|T\|_{\Lp\rightarrow L^\qq}$.
We say that $T$ satisfies a weak $(\pp,\qq)$ inequality, and write $T : L^\pp \rightarrow L^{\qq,\infty}$, if for every $\lambda>0$,
\[ \|\chi_{\{x\in \rn : |T f(x)|>\lambda \}} \|_{L^\qq(\rn)} \leq C \|f\|_{L^\pp(\rn)}.
\]
Denote the infimum of all such constants $C$ by $\|T\|_{\Lp\rightarrow L^{\qq,\infty}}$.

We now define the operators which we will work with. For complete information, see~\cite{MR3688149,MR3470665,GraBook,VLS}. We first define the fractional maximal operators. 
\begin{definition} \label{fractional maximal operator}
    Given $0 \leq \alpha < n$, we define the fractional maximal operator $M_\alpha$ by
    \[ M_\alpha f(x) = \sup_{Q \ni x} |Q|^{\frac{\alpha}{n}} \dashint_Q |f(y)| \, dy \hspace{8mm} \forall x \in \R^n, \]
    where the supremum is taken over cubes centered at $x$ with sides parallel to the coordinate axes. 
\end{definition}

\begin{remark}
In Definition~\ref{fractional maximal operator} we can also take the supremum over all cubes that contain $x$, and we will get an operator that is pointwise equivalent.  Similarly, if we define $M_\alpha$ using balls, either those centered at $x$ or containing $x$, we get an equivalent operator.
\end{remark}

We now define fractional  singular integral operators.
\begin{definition} \label{CZ operator}
Given $0 \leq \alpha < n$, we say that an operator $T_\alpha$, defined on measurable functions, is a fractional Calder\'on–Zygmund operator if $T_\alpha$ is bounded from $L^p(\R^n)$ to $L^q(\R^n)$ for some fixed $1 < p \leq q < \infty$, and for any $f \in L^p(\R^n)$ with compact support,
\begin{gather*}
    T_\alpha f(x) = \int_{\R^n} K_\alpha(x,y) f(y) \, dy, \hspace{5mm} x \not \in \supp{(f)}.
\end{gather*}
Here the kernel $K_\alpha(x,y)$ is  defined for all $(x,y) \in \R^n \times \R^n$, $x \neq y$, and satisfies the  size estimate
\begin{gather} \label{kernel singularity bound}
    |K_\alpha(x,y)| \leq \frac{C_0}{|x-y|^{n-\alpha}},
\end{gather}
and the smoothness estimate
\begin{gather} \label{kernel shift bound}
    |K_\alpha(x+h,y) - K_\alpha(x,y)| + |K_\alpha(x,y+h) - K_\alpha(x,y)| \leq C_0 \frac{|h|^\delta}{|x-y|^{n-\alpha+\delta}}
\end{gather}
for all $|h| < \frac{1}{2}|x-y|$ and some fixed $\delta > 0$. 
\end{definition}

Finally, we define the Riesz potentials, also referred to as fractional integral operators.

\begin{definition} \label{Riesz Potentials}
    Given $\alpha$, $0 < \alpha < n$, define the Riesz potential $I_\alpha$ to be the convolution operator
    \[ I_\alpha f(x) = \gamma(\alpha,n) \int_{\R^n} \frac{f(y)}{|x-y|^{n-\alpha}} \, dy, \]
    where
    \[ \gamma(\alpha,n) = \frac{\Gamma(\tfrac{n}2 - \tfrac\alpha2)}{\pi^{n/2}2^\alpha \Gamma(\tfrac\alpha2)}. \]
\end{definition}

\begin{remark} \label{remark:Ialpha-Talpha-bound}
    It is immediate that if $T_\alpha$ is a fractional CZO, then $|T_\alpha f(x)| \leq CI_\alpha (|f|)(x)$.  Thus sufficient conditions for the Riesz potential to be bounded also immediately imply that $T_\alpha$ is bounded.
\end{remark}

We will prove necessary conditions for fractional CZOs that satisfy a non-degeneracy condition.  This condition was introduced by Stein~\cite{SteinBook} for convolution-type singular integrals; see also~\cite{MR4319616}.

\begin{definition} \label{non-degenerate kernel}
    Given a fractional Calder\'on-Zygmund operator $T_\alpha$ with kernel $K_\alpha(x,y)$, we say that $T_\alpha$ has a non-degenerate kernel if there exists $a > 0$ and a unit vector $u_1$ such that for $x,y \in \R^n$, $x-y = tu_1$, $t \in \R$,
    \begin{gather} \label{non-degenerate condition}
        |K_\alpha(x,y)| \geq \frac{a}{|x-y|^{n-\alpha}}.
    \end{gather}
\end{definition}
The non-degeneracy condition \eqref{non-degenerate condition} is satisfied by many classic operators, such as the Hilbert transform, the Riesz transforms, and the Riesz potentials.

\begin{remark} \label{remark:extrapol}
We conclude this section by sketching why Log-H\"older continuity is not necessary for a variety of operators to be bounded on the variable Lebesgue spaces.  As we noted above, Lerner~\cite{lerner2005some} constructed an example of an exponent function $\pp$ that is discontinuous at $0$ and infinity, but such that the maximal operator is bounded on $\Lp$.  (See also~\cite{lerner2023boundedness}.)  By results of Diening (see~\cite[Theorem~4.37, Corollary~4.64]{VLS}), this implies that there exists $p_0>1$ such that $M$ is bounded on $L^{\pp/p_0}$ and on $L^{(\pp/p_0)'}$.  Therefore, by Rubio de Francia extrapolation in the scale of variable Lebesgue spaces, since CZOs satisfy weighted norm inequalities with respect to Muckenhoupt $A_p$ weights, they are also bounded on $\Lp$.  (See~\cite[Theorem~5.39]{VLS}, which is stated for convolution-type singular integrals, but which readily extends to more general CZOs.)  Similarly, given Lerner's example, it is possible to construct an exponent $\pp$ such that if $\qq$ is defined by~\eqref{conjugate exp}, then $\qq$ is not Log-H\"older continuous, but the maximal operator is bounded on $L^{(\qq/q_0)'}$ for some $q_0>1$.  Then again by extrapolation, the Riesz potentials $I_\alpha$ and fractional maximal operators are bounded from $\Lp$ to $L^\qq$ (see \cite[Theorem~5.46]{VLS} for Riesz potentials; bounds for $M_\alpha$ follow since it is dominated pointwise by $I_\alpha$).  By Remark~\ref{remark:Ialpha-Talpha-bound}, fractional CZOs satisfy the same bounds.
\end{remark}

\medskip
\section{Averaging Operators and the $K_0^\alpha$ Condition} \label{Averagin Operators and the K_0^alpha Condition}
\indent We now give some properties of the $K_0^\alpha(\R^n)$ condition.  We first recall the definition given in the Introduction.
\begin{definition} \label{K_0^alpha definition}
    Given $\Omega$, $0 \leq \alpha < n$, and $\pp \in \mathcal{P}(\Omega)$ such that $p_+<\frac{n}{\alpha}$, define $\qq$ by~\eqref{q dot defn}.  We say that $\pp \in K_0^\alpha(\Omega)$ if
    \[ \Kalconst{\pp}{\Omega} = \sup_{Q \subset \Omega} |Q|^{\frac{\alpha}{n}-1} \lVert \chi_Q \rVert_{p'(\cdot)} \lVert \chi_Q \rVert_{q(\cdot)}  < \infty, \]
    where the supremum is taken over cubes $Q \subset \Omega$ with sides parallel to the coordinate axes. In the case that $\alpha = 0$, we write $K_0(\Omega) = K_0^0(\Omega)$.
\end{definition}
The $K_0(\R^n)$ condition is due to Kopaliani \cite{Kopaliani2007}; the notation $K_0$ was introduced in \cite{VLS}. The $K_0$ and $K_0^\alpha$ conditions were implicit in Berezhon\u{\i}~\cite{Berezhnoi1999}. The explicit $K_0^\alpha$ condition was introduced in \cite{Chaffee}. As our next proposition shows, $\pp \in K_0^{\alpha}(\R^n)$ is equivalent to the fractional averaging operators over cubes being uniformly bounded from $L^{\pp}(\R^n) \to L^{\qq}(\R^n)$. Its proof can be found in \cite{VLS} for $\alpha = 0$ and in \cite{Berezhnoi1999} for $\alpha > 0$ in a more abstract setting. We omit the proof due to its similarity to the proof of Proposition \ref{K_0^alpha iff A_E^alpha bdd prop}. 
\begin{definition}
    Given a cube $Q$ and  $0 \leq \alpha < n$, define the fractional averaging operator $A_Q^\alpha$ by
    \[ A_Q^\alpha f(x) = |Q|^{\frac{\alpha}{n}} \dashint_Q f(y) \, dy \; \chi_Q(x). \]
\end{definition}
\begin{prop} \label{K_0^alpha and A_Q^alpha proposition}
    Given $0 \leq \alpha < n$ and $\pp \in \Pp(\Omega)$ such that $p_+<\frac{n}{\alpha}$, define $\qq$ by~\eqref{q dot defn}. Then $\pp \in K_0^\alpha(\Omega)$ if and only if the  operators $A_Q^\alpha$ satisfy $A_Q^\alpha : L^{\pp}(\Omega) \to L^{q(\cdot)}(\Omega)$ and are uniformly bounded for all cubes $Q \subset \Omega$. Moreover,  the operator norms satisfy 
    \begin{gather*}
        \sup_{Q} \lVert A_Q^\alpha \rVert_{L^\pp \rightarrow L^\qq} \leq K_{\pp} \Kalconst{\pp}{\Omega} \leq 4\Kalconst{\pp}{\Omega}.
    \end{gather*} 
\end{prop}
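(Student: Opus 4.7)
The plan is to derive both implications from the pointwise formula
\[
A_Q^\alpha f(x) = |Q|^{\alpha/n - 1} \chi_Q(x) \int_Q f(y)\,dy,
\]
which realizes $A_Q^\alpha$ as a rank-one operator: it sends $f$ to a constant multiple of $\chi_Q$, where the constant is $\int_Q f$. Its operator norm from $\Lp$ to $L^\qq$ should therefore factor as $|Q|^{\alpha/n - 1}\|\chi_Q\|_\qq$ times the norm of the linear functional $f \mapsto \int_Q f$ on $\Lp(\Omega)$, which by duality is comparable to $\|\chi_Q\|_{p'(\cdot)}$. Thus the sufficiency direction will come from H\"older's inequality (Theorem \ref{Holders}) and the necessity direction from the associate-norm identity (Theorem \ref{associate norm}).

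For sufficiency, fix a cube $Q \subset \Omega$ and $f \in \Lp(\Omega)$. Taking the $L^\qq$ norm of the pointwise formula gives
\[
\|A_Q^\alpha f\|_\qq \leq |Q|^{\alpha/n - 1}\left|\int_Q f\right|\|\chi_Q\|_\qq.
\]
I would then apply H\"older's inequality in the form
\[
\left|\int_Q f\right| = \left|\int_\Omega (f\chi_Q)\chi_Q\right| \leq K_\pp \|f\chi_Q\|_\pp \|\chi_Q\|_{p'(\cdot)} \leq K_\pp \|f\|_\pp \|\chi_Q\|_{p'(\cdot)},
\]
and invoke the definition of $\Kalconst{\pp}{\Omega}$ to conclude $\|A_Q^\alpha f\|_\qq \leq K_\pp \Kalconst{\pp}{\Omega}\|f\|_\pp$, uniformly in $Q$; the bound $K_\pp \leq 4$ then supplies the second inequality in the proposition.

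For necessity, set $M := \sup_Q \|A_Q^\alpha\|_{\Lp \to L^\qq}$ and fix a cube $Q \subset \Omega$. For any $f \geq 0$ supported on $Q$ with $\|f\|_\pp \leq 1$, the pointwise formula gives $\|A_Q^\alpha f\|_\qq = |Q|^{\alpha/n - 1}\|\chi_Q\|_\qq \int_Q f \leq M$. I then take the supremum over such $f$. By Theorem \ref{associate norm} applied to $\chi_Q \in L^{p'(\cdot)}(\Omega)$,
\[
k_{p'(\cdot)} \|\chi_Q\|_{p'(\cdot)} \leq \sup_{\|g\|_\pp \leq 1} \int_\Omega \chi_Q g,
\]
and restricting the test functions to those $g \geq 0$ supported on $Q$ does not change this supremum, since replacing $g$ by $|g|\chi_Q$ never decreases $\int \chi_Q g$ and never increases $\|g\|_\pp$. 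Using $k_{p'(\cdot)} = k_\pp$ and $k_\pp^{-1} \leq 3$, I obtain $|Q|^{\alpha/n - 1}\|\chi_Q\|_{p'(\cdot)}\|\chi_Q\|_\qq \leq 3M$; taking the supremum over cubes $Q$ shows $\pp \in K_0^\alpha(\Omega)$.

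No serious obstacle arises in this argument. The only delicate point is tracking the constants $K_\pp$ and $k_\pp$ arising from the variable-exponent versions of H\"older and of the associate-norm identity, but both are bounded by universal constants, so the quantitative conclusion of the proposition follows cleanly; morally, this is just the standard fact that a rank-one operator's norm factors as a product of norms of its constituent pieces.
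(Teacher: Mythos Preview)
Your proof is correct and follows essentially the same route as the paper. The paper omits the proof of this proposition, pointing instead to the nearly identical proof of Proposition~\ref{K_0^alpha iff A_E^alpha bdd prop}, which uses exactly the two ingredients you identify: H\"older's inequality (Theorem~\ref{Holders}) for the sufficiency direction and the associate-norm characterization (Theorem~\ref{associate norm}) for necessity, with the same constants $K_\pp$ and $k_\pp^{-1}\le 3$ appearing in the same places.
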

We need to extend the $K_0^\alpha$ condition to a general collection of sets. Hereafter, let $\E$ be a collection of sets $E \subset \R^n$ satisfying $0 < |E| < \infty$. 
\begin{definition} \label{Generalized K_0^alpha Definition}
   Given $0 \leq \alpha < n$ and $\pp \in \Pp(\Omega)$ such that $p_+<\frac{n}{\alpha}$, define $\qq$ by~\eqref{q dot defn}.  We say that $\pp \in K_0^\alpha(\mathcal{E})$ if
    \[ \Kalconst{\pp}{\E} = \sup_{E \in \mathcal{E}} |E|^{\frac{\alpha}{n}-1} \lVert \chi_E \rVert_{p'(\cdot)} \lVert \chi_E \rVert_{q(\cdot)} < \infty. \]
    In the case that $\alpha = 0$, we write $K_0(\E) = K_0^0(\E)$.
\end{definition}
Like the standard $K_0^\alpha(\R^n)$ condition, $\pp \in K_0^{\alpha}(\E)$ is equivalent to the uniform boundedness of the fractional averaging operators over each $E \in \E$, i.e.,
\[ A_E^\alpha f(x) = |E|^{\frac{\alpha}{n}} \dashint_E f(y) \, dy \; \chi_E(x). \]
\begin{prop} \label{K_0^alpha iff A_E^alpha bdd prop}
   Given $0 \leq \alpha < n$ and $\pp \in \Pp(\Omega)$ such that $p_+<\frac{n}{\alpha}$, define $\qq$ by~\eqref{q dot defn}. Given a  collection $\mathcal{E}$, $\pp \in K_0^\alpha(\mathcal{E})$ if and only if the  $A_E^\alpha$ satisfy $A_E^\alpha : L^{\pp}(\R^n) \to L^{q(\cdot)}(\R^n)$ and are uniformly bounded for all $E \in \mathcal{E}$. Moreover, the operator norms satisfy 
    \begin{gather} \label{operator norm of A_E^alpha}
        \sup_{E \in \E} \lVert A_E^\alpha \rVert_{L^\pp \rightarrow L^\qq} \leq K_{\pp} \Kalconst{\pp}{\E} \leq 4\Kalconst{\pp}{\E}.
    \end{gather}  
\end{prop}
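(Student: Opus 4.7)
The proof is essentially identical to that of Proposition \ref{K_0^alpha and A_Q^alpha proposition}, since nothing in that argument uses the cube geometry beyond measurability and finite positive measure. The plan is to exploit the identity
\[ \|A_E^\alpha f\|_\qq = |E|^{\frac{\alpha}{n}-1} \left|\int_E f(y)\,dy\right| \|\chi_E\|_\qq, \]
which follows immediately from writing $A_E^\alpha f = |E|^{\frac{\alpha}{n}-1}\bigl(\int_E f\bigr)\chi_E$ and using the homogeneity of the norm. This reduces the problem to comparing the linear functional $f \mapsto \int_E f$ on $L^\pp$ with $\|\chi_E\|_{p'(\cdot)}$.

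For the forward implication, I would assume $\pp \in K_0^\alpha(\E)$ and bound $|\int_E f| \leq \int_E |f|$ and then apply the variable-exponent H\"older inequality (Theorem \ref{Holders}) to get $\int_E |f| \leq K_\pp \|f\|_\pp \|\chi_E\|_{p'(\cdot)}$. Substituting this into the identity above and invoking the definition of $\Kalconst{\pp}{\E}$ gives
\[ \|A_E^\alpha f\|_\qq \leq K_\pp \cdot |E|^{\frac{\alpha}{n}-1}\|\chi_E\|_{p'(\cdot)}\|\chi_E\|_\qq \cdot \|f\|_\pp \leq K_\pp \Kalconst{\pp}{\E}\|f\|_\pp, \]
uniformly in $E \in \E$. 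Since $K_\pp \leq 4$ by Theorem \ref{Holders}, this yields the full chain of inequalities in \eqref{operator norm of A_E^alpha}.

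For the converse, suppose $M := \sup_{E\in\E}\|A_E^\alpha\|_{L^\pp\to L^\qq}$ is finite. Fixing $E \in \E$ and applying the identity to nonnegative $f$ with $\|f\|_\pp \leq 1$, I would take the supremum over such $f$ and use the lower duality bound in Theorem \ref{associate norm}, namely $\sup_{\|f\|_\pp \leq 1}\int \chi_E f\,dy \geq k_{p'(\cdot)}\|\chi_E\|_{p'(\cdot)}$, to conclude
\[ |E|^{\frac{\alpha}{n}-1} k_{p'(\cdot)}\|\chi_E\|_{p'(\cdot)}\|\chi_E\|_\qq \leq M. \]
Taking the supremum over $E \in \E$ and using $k_{p'(\cdot)}^{-1} \leq 3$ gives $\Kalconst{\pp}{\E} \leq 3M < \infty$, so $\pp \in K_0^\alpha(\E)$.

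There is no serious obstacle. The only points requiring care are (i)~verifying that the identity for $\|A_E^\alpha f\|_\qq$ really is an equality (so that the converse extracts sharp information), and (ii)~tracking the constants from Theorem \ref{Holders} and Theorem \ref{associate norm} to produce exactly the inequality displayed in \eqref{operator norm of A_E^alpha}.
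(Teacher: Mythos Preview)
Your proposal is correct and follows essentially the same approach as the paper: H\"older's inequality (Theorem~\ref{Holders}) for the forward direction, and the associate-norm duality (Theorem~\ref{associate norm}) for the converse, with the same constants. The paper's proof is virtually identical to what you outline, including the final bound $\Kalconst{\pp}{\E} \leq 3B$ (noting $k_\pp = k_{p'(\cdot)}$).
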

\begin{proof}
    The proof is essentially the same as that for cubes; for completeness we include the short proof. If $\pp \in K_0^\alpha(\mathcal{E})$, then for all $E \in \mathcal{E}$,
    \begin{multline*}
        \lVert A_E^\alpha f \rVert_{q(\cdot)} = |E|^{\frac{\alpha}{n}-1} \int_{\R^n} f(x) \chi_E (x) \, dx \lVert \chi_E \rVert_{q(\cdot)} \\
        \leq K_{\pp} |E|^{\frac{\alpha}{n}-1} \lVert f \rVert_{\pp} \lVert \chi_E \rVert_{p'(\cdot)} \lVert \chi_E \rVert_{q(\cdot)} 
        \leq K_{\pp} \Kalconst{\pp}{\E} \lVert f \rVert_{\pp}.
    \end{multline*}
    Since this inequality holds for all $f \in L^{\pp}(\R^n)$ and $E \in \E$, $K_{\pp} \Kalconst{\pp}{\E}$ is a uniform bound for the operator norms of $\{ A_E^{\alpha} \}_{E \in \E}$. Conversely, suppose that the fractional averaging operators are uniformly bounded with constant $B > 0$. Then by duality there exists $g \in L^{\pp}$, $\lVert g \rVert_{\pp} \leq 1$, such that
    \begin{multline*}
        |E|^{\frac{\alpha}{n}-1} \lVert \chi_E \rVert_{q(\cdot)} \lVert \chi_E \rVert_{p'(\cdot)} 
        \leq k_{\pp}^{-1} |E|^{\frac{\alpha}{n}-1} \lVert \chi_E \rVert_{q(\cdot)} \int_{\R^n} \chi_E g \, dx  \\
        = k_{\pp}^{-1} \lVert A_E^\alpha g \rVert_{q(\cdot)} 
        \leq k_{\pp}^{-1} B \lVert g \rVert_{\pp} 
        \leq k_{\pp}^{-1} B \leq 3B.
    \end{multline*}
\end{proof}
We now give some of the consequences of  the $K_0^{\alpha}$ condition. In the classical Lebesgue spaces $(1 \leq p < \infty)$, for any set $E \subset \R^n$ we always have
\[ \lVert \chi_E \rVert_{L^p(\R^n)} = |E|^{\frac{1}{p}}. \]
Finding a similar expression is not possible in general in the variable Lebesgue spaces. However, for $\pp \in K_0^{\alpha}(\E)$, we can obtain an expression for $\lVert \chi_E \rVert_{L^{\pp}(\R^n)}$ up to uniform constants via the harmonic mean. This was shown in \cite[Theorem~4.5.7]{Diening} for cubes.  
\begin{definition} \label{Harmonic Mean Definition}
    Given $\pp \in \mathcal{P}(\R^n)$ and a set $E$, $0 < |E| < \infty$, define the harmonic mean $p_E$ of $\pp$ on $E$ by
    \begin{gather*}
        \frac{1}{p_E} = \dashint_{E} \frac{1}{p(x)} \, dx.
    \end{gather*}
\end{definition}
\begin{remark}
    For the H\"older conjugate $p'(\cdot)$, there is no ambiguity in writing $p'_E$ to denote the harmonic mean of $p'(\cdot)$ on $E$ since 
    \[ \frac{1}{p'_E} = \frac{1}{(p_E)'} = 1 - \frac{1}{p_E}. \]
\end{remark}
\begin{prop} \label{norm and harmonic mean proposition}
   If $\pp \in K_0(\mathcal{E})$, then for all $E \in \E$, 
    \begin{gather} \label{norm and harmonic mean equivalence eqn}
        \frac{1}{2K_{\pp}} |E|^{1/p_E} \leq \lVert \chi_E \rVert_{\pp} \leq \frac{2K_{\pp}^2 \Kconst{\pp}{\E}}{k_{\pp}} |E|^{1/p_E}.
    \end{gather}
\end{prop}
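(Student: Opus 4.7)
The plan is to prove the two inequalities separately. The lower bound is actually a universal inequality that does not require the $K_0(\E)$ hypothesis; the upper bound then follows by combining the lower bound (applied to $p'(\cdot)$) with the $K_0(\E)$ hypothesis through H\"older duality.

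For the lower bound, I will invoke Theorem~\ref{associate norm}: it suffices to exhibit a test function $g$ supported on $E$ with $\|g\|_{p'(\cdot)} \leq 1$ and $\int_E g\,dx \geq \tfrac{1}{2}|E|^{1/p_E}$. The correct choice is
\[ g(x) = \tfrac{1}{2}\,|E|^{-1/p'(x)}\chi_E(x), \]
interpreted with the conventions $1/\infty = 0$ (so that $|E|^{-1/p'(x)} = 1$ at points where $p(x) = 1$, and $|E|^{1/p(x)} = 1$ at points where $p(x) = \infty$). To verify $\rho_{p'(\cdot)}(g) \leq 1$, split $E$ into $E_1 = \{x \in E : p(x) = 1\}$, $E_\infty = \{x \in E : p(x) = \infty\}$, and $E_* = E \setminus (E_1 \cup E_\infty)$. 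The algebraic identity $(-1 + 1/p(x))\,p'(x) = -1$ on $E_*$ collapses $g(x)^{p'(x)}$ to $(\tfrac{1}{2})^{p'(x)}|E|^{-1}$, and the bound $(\tfrac{1}{2})^{p'(x)} \leq \tfrac{1}{2}$ (since $p'(x)\geq 1$) yields an integral contribution from $E_*\cup E_\infty$ of at most $|E\setminus E_1|/(2|E|) \leq \tfrac{1}{2}$. The $L^\infty$ contribution from $E_1$ is exactly $\tfrac{1}{2}$, so $\rho_{p'(\cdot)}(g) \leq 1$. The same case analysis shows uniformly over the three pieces that
\[ \int_E g\,dx = \tfrac{1}{2}|E|^{-1}\int_E |E|^{1/p(x)}\,dx, \]
and Jensen's inequality applied to the convex function $t \mapsto e^{t \log|E|}$ gives
\[ \dashint_E |E|^{1/p(x)}\,dx \geq |E|^{\dashint_E 1/p(x)\,dx} = |E|^{1/p_E}. \]
Combining, $\int_E g\,dx \geq \tfrac{1}{2}|E|^{1/p_E}$, and Theorem~\ref{associate norm} yields $\|\chi_E\|_\pp \geq K_\pp^{-1}\int_E g\,dx \geq (2K_\pp)^{-1}|E|^{1/p_E}$.

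For the upper bound, since $K_{p'(\cdot)} = K_\pp$ (by the remark following Theorem~\ref{Holders}) and the harmonic mean of $p'(\cdot)$ on $E$ equals $p'_E = p_E/(p_E - 1)$, applying the lower bound just established to the conjugate exponent $p'(\cdot)$ gives $\|\chi_E\|_{p'(\cdot)} \geq (2K_\pp)^{-1}|E|^{1/p'_E}$. The hypothesis $\pp \in K_0(\E)$ supplies $\|\chi_E\|_\pp\|\chi_E\|_{p'(\cdot)} \leq \Kconst{\pp}{\E}\,|E|$. Dividing and using the identity $|E|^{1 - 1/p'_E} = |E|^{1/p_E}$ yields
\[ \|\chi_E\|_\pp \leq 2K_\pp\,\Kconst{\pp}{\E}\,|E|^{1/p_E}, \]
which is bounded by the stated constant $2K_\pp^2\Kconst{\pp}{\E}/k_\pp$ since $k_\pp \leq 1 \leq K_\pp$.

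The main technical obstacle is the careful modular bookkeeping on the pieces where $p(x) \in \{1,\infty\}$: the factor of $\tfrac{1}{2}$ in the definition of $g$ is essential so that the $L^\infty$ contribution on $E_1$ and the integrated contribution from $E_*\cup E_\infty$ together sum to at most $1$, and the cancellation $(-1 + 1/p(x))\,p'(x) = -1$ on $E_*$ is what reduces the integral term to an elementary bound. Once these observations are in place, the Jensen step and the duality step are both routine.
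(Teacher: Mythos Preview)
Your proof is correct. The lower bound argument is essentially identical to the paper's: both use the test function $\tfrac{1}{2}|E|^{-1/p'(\cdot)}\chi_E$, verify its $L^{p'(\cdot)}$ norm is at most $1$ via the same modular computation, and apply Jensen's inequality together with H\"older (which is exactly what the upper inequality in Theorem~\ref{associate norm} encodes).

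For the upper bound, your route is slightly different from the paper's and in fact simpler. The paper invokes duality (the lower inequality in Theorem~\ref{associate norm}) to pick $g \in L^{p'(\cdot)}$ with $\|g\|_{p'(\cdot)}=1$ and $\|\chi_E\|_\pp \leq k_\pp^{-1}\int_E g\,dx$, then recognizes $\|\chi_E\|_{p'(\cdot)}\dashint_E g\,dx = \|A_E g\|_{p'(\cdot)}$ and applies the averaging-operator bound $\|A_E\|_{L^{p'(\cdot)}\to L^{p'(\cdot)}} \leq K_\pp \Kconst{\pp}{\E}$ from Proposition~\ref{K_0^alpha iff A_E^alpha bdd prop}; this is where the extra factors $K_\pp$ and $k_\pp^{-1}$ in the stated constant arise. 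You instead divide the $K_0(\E)$ inequality $\|\chi_E\|_\pp\|\chi_E\|_{p'(\cdot)} \leq \Kconst{\pp}{\E}|E|$ directly by the lower bound $\|\chi_E\|_{p'(\cdot)} \geq (2K_\pp)^{-1}|E|^{1/p'_E}$, which avoids the averaging-operator detour entirely and yields the sharper constant $2K_\pp\Kconst{\pp}{\E}$. Both arguments rest on the same ingredient (the lower bound applied to $p'(\cdot)$), but yours packages it more efficiently.
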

\begin{proof}
    The first inequality in~\eqref{norm and harmonic mean equivalence eqn} follows by modifying the proof of \cite[Lemma~3.1]{GreedyBases}, which was derived from the proof given in \cite{Diening}. Fix a set $E \in \E$. We have that 
    \begin{align*}
        \frac{1}{p_E} + \frac{1}{p'_E} =  \dashint_E \frac{1}{p(x)} + \frac{1}{p'(x)} \, dx 
        = 1.
    \end{align*}
    By Jensen's inequality,
    \begin{multline*}
        \left(\frac{1}{|E|}\right)^{\frac{1}{p'_E}} = \exp \left(\log \left[\left(\frac{1}{|E|}\right)^{\dashint_E \frac{1}{p'(x)} \, dx} \right]\right) = \exp \left(\left[\dashint_E \frac{1}{p'(x)} \, dx\right] \log \left(\frac{1}{|E|}\right)\right) \\
        = \exp \left(\dashint_E \log \left[\left(\frac{1} {|E|}\right)^{\frac{1}{p'(x)}}\right] \, dx\right) 
        \leq \dashint_E \left(\frac{1}{|E|}\right)^{\frac{1}{p'(x)}} \,dx.
    \end{multline*}
    Now by H\"older's inequality (Theorem \ref{Holders}),
    \begin{multline} \label{|E| 1/P_E Relation}
       |E|^{\frac{1}{p_E}} = |E| \left(\frac{1}{|E|} \right)^{\frac{1}{p'_E}} \\
       \leq |E| \, \dashint_E \left(\frac{1}{|E|}\right)^{\frac{1}{p'(x)}} \, dx 
       \leq K_{\pp} \, \lVert \chi_E \rVert_{\pp} \lVert |E|^{-1/p'(\cdot)} \chi_E \rVert_{p'(\cdot)}. 
    \end{multline}
    Define $E'_\infty = \{ x \in E : p'(x) = \infty\}$. Then,
    \begin{multline*}
       \rho_{p'(\cdot)} \left(\frac{|E|^{-\frac{1}{p'(x)}} \chi_E}{2} \right) = \int_{E \setminus E'_\infty} \left(\frac{|E|^{-\frac{1}{p'(x)}}}{2}\right)^{p'(x)} \, dx + \frac{1}{2} \lVert |E|^{-\frac{1}{p'(\cdot)}} \rVert_{L^\infty(E'_\infty)}
       \\ 
       \leq \frac{1}{2} \left(\int_{E \setminus E'_\infty} |E|^{-1} \, dx + 1 \right) \leq 1.
    \end{multline*}
    This implies that $\lVert |E|^{-1/p'(\cdot)} \chi_E \rVert_{p'(\cdot)} \leq 2$, which by \eqref{|E| 1/P_E Relation} gives the first inequality in \eqref{norm and harmonic mean equivalence eqn}. 
    
    \indent For the second inequality we first note that $\pp \in K_0(\mathcal{E})$ if and only if $p'(\cdot) \in K_0(\mathcal{E})$ by the symmetry of the definition. Thus, assuming $\pp \in K_0(\E)$ is equivalent to the averaging operators $A_E$ being uniformly bounded on $L^{p'(\cdot)}$, and $\lVert A_E \rVert_{L^{p'(\cdot)} \to L^{p'(\cdot)}} \leq K_{\pp} \Kconst{\pp}{\E}$. By duality, there exists $g \in L^{p'(\cdot)}$, $\lVert g \rVert_{p'(\cdot)} = 1$, such that
    \begin{multline*}
        \frac{\lVert \chi_E \rVert_{\pp}}{|E|^{1/p_E}} \leq \frac{|E|^{-\frac{1}{p_E}}}{k_{\pp}} \int_E g \, dx = \frac{|E|^{\frac{1}{p'_E}}}{k_{\pp}} \dashint_E g \, dx.
        \leq \frac{2K_{\pp}}{k_{\pp}} \lVert \chi_E \rVert_{p'(\cdot)} \dashint_E g \, dx \\
        = \frac{2K_{\pp}}{k_{\pp}} \lVert A_E g \rVert_{p'(\cdot)} 
        \leq \frac{2K_{\pp}^2 \Kconst{\pp}{\E}}{k_{\pp}} \lVert g \rVert_{p'(\cdot)} = \frac{2K_{\pp}^2 \Kconst{\pp}{\E}}{k_{\pp}}.
    \end{multline*}
    The second inequality follows from the first inequality in \eqref{norm and harmonic mean equivalence eqn} applied to the exponent $p'(\cdot)$. This implies the second inequality in \eqref{norm and harmonic mean equivalence eqn}, and we are done.
\end{proof}
\begin{lemma} \label{K_0^alpha implies K_0 lemma}
  Given $0 \leq \alpha < n$ and $\pp\in \Pp(\Omega)$, $p_+<\frac{n}{\alpha}$, define $\qq$ by~\eqref{q dot defn}.  Then  $\pp \in K_0^\alpha(\E)$ if and only if $\pp,q(\cdot) \in K_0(\E)$.
\end{lemma}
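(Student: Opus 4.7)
The plan is to characterize both sides of the equivalence through the harmonic-mean estimate of Proposition~\ref{norm and harmonic mean proposition}, which under a $K_0$ hypothesis gives $\lVert \chi_E \rVert_{\pp} \approx |E|^{1/p_E}$. First, I would observe that integrating $1/p(x) - 1/q(x) = \alpha/n$ over $E$ and dividing by $|E|$ yields the arithmetic identity
\[
\frac{1}{p_E} - \frac{1}{q_E} = \frac{\alpha}{n}, \qquad \text{equivalently,} \qquad \frac{1}{p'_E} + \frac{1}{q_E} = 1 - \frac{\alpha}{n}.
\]
The case $\alpha = 0$ is trivial since then $\qq \equiv \pp$, so from here on I assume $\alpha > 0$.

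For the \emph{if} direction, assume $\pp, q(\cdot) \in K_0(\E)$. Since the $K_0$ condition is symmetric under conjugation, $p'(\cdot), q'(\cdot) \in K_0(\E)$ as well, and Proposition~\ref{norm and harmonic mean proposition} provides the uniform equivalences $\lVert \chi_E \rVert_{p'(\cdot)} \approx |E|^{1/p'_E}$ and $\lVert \chi_E \rVert_{q(\cdot)} \approx |E|^{1/q_E}$. Multiplying and inserting the identity above gives
\[
|E|^{\frac{\alpha}{n} - 1} \lVert \chi_E \rVert_{p'(\cdot)} \lVert \chi_E \rVert_{q(\cdot)} \approx |E|^{\frac{\alpha}{n} - 1 + \frac{1}{p'_E} + \frac{1}{q_E}} = 1,
\]
so $\pp \in K_0^\alpha(\E)$.

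The \emph{only if} direction is the substantive one. Assuming $\pp \in K_0^\alpha(\E)$, I would first note that the first inequality in the proof of Proposition~\ref{norm and harmonic mean proposition} was established with no $K_0$ hypothesis; applied to $p'(\cdot)$ and to $q(\cdot)$ it supplies the universal lower bounds $|E|^{1/p'_E} \leq 2K_{\pp}\lVert \chi_E \rVert_{p'(\cdot)}$ and $|E|^{1/q_E} \leq 2K_{q(\cdot)}\lVert \chi_E \rVert_{q(\cdot)}$. Pairing each of these with the $K_0^\alpha$ inequality $\lVert \chi_E \rVert_{p'(\cdot)}\lVert \chi_E \rVert_{q(\cdot)} \leq \Kalconst{\pp}{\E}\, |E|^{1-\alpha/n}$ yields the matching upper bounds $\lVert \chi_E \rVert_{p'(\cdot)} \lesssim |E|^{1/p'_E}$ and $\lVert \chi_E \rVert_{q(\cdot)} \lesssim |E|^{1/q_E}$. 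Next, to transport these estimates to bounds on $\lVert \chi_E \rVert_{\pp}$ and $\lVert \chi_E \rVert_{q'(\cdot)}$, I would apply the generalized Hölder inequality (Corollary~\ref{gen Holder}) with the constant bridging exponent $r \equiv n/\alpha$. Since $1/p(x) = 1/q(x) + \alpha/n$,
\[
\lVert \chi_E \rVert_{\pp} \leq K \lVert \chi_E \rVert_{q(\cdot)} \lVert \chi_E \rVert_{n/\alpha} = K \lVert \chi_E \rVert_{q(\cdot)} |E|^{\alpha/n} \lesssim |E|^{1/q_E + \alpha/n} = |E|^{1/p_E},
\]
so $\lVert \chi_E \rVert_{\pp} \lVert \chi_E \rVert_{p'(\cdot)} \lesssim |E|$, i.e., $\pp \in K_0(\E)$. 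An identical argument using $1/q'(x) = 1/p'(x) + \alpha/n$ gives $\lVert \chi_E \rVert_{q'(\cdot)} \lesssim |E|^{1/q'_E}$, and hence $q(\cdot) \in K_0(\E)$.

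The main obstacle is that $\Kalconst{\pp}{\E}$ controls only the mixed product $\lVert \chi_E \rVert_{p'(\cdot)}\lVert \chi_E \rVert_{q(\cdot)}$, while the two $K_0$ conditions to be deduced also involve the conjugate norms $\lVert \chi_E \rVert_{\pp}$ and $\lVert \chi_E \rVert_{q'(\cdot)}$, which never appear in the $K_0^\alpha$ hypothesis. The trick is to first pin down sharp harmonic-mean equivalences for the two norms that do appear, and then let the generalized Hölder inequality with the constant exponent $n/\alpha$ serve as the bridge to their missing partners.
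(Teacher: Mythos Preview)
Your proof is correct and uses the same two tools as the paper---Proposition~\ref{norm and harmonic mean proposition} and the generalized H\"older inequality with the constant exponent $n/\alpha$---but you organize the \emph{only if} direction differently. The paper applies H\"older immediately: from $1/p(x)=\alpha/n+1/q(x)$ it writes $\lVert\chi_E\rVert_{\pp}\le K\lVert\chi_E\rVert_{q(\cdot)}|E|^{\alpha/n}$, multiplies by $\lVert\chi_E\rVert_{p'(\cdot)}$, and reads off the $K_0$ bound for $\pp$ directly from the $K_0^\alpha$ hypothesis in a single line (and similarly for $\qq$). You instead first isolate the harmonic-mean upper bounds $\lVert\chi_E\rVert_{p'(\cdot)}\lesssim|E|^{1/p'_E}$ and $\lVert\chi_E\rVert_{q(\cdot)}\lesssim|E|^{1/q_E}$ by pairing the universal lower bounds with the $K_0^\alpha$ inequality, and only then invoke H\"older. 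Your route is slightly longer but yields the content of Corollary~\ref{K_0^alpha implies norm equivalence} as a byproduct, which the paper states separately. The \emph{if} direction is identical in both proofs.
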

\begin{proof}
    This is immediate for $\alpha = 0$, so we assume $0 < \alpha < n$. Suppose $\pp \in K_0^\alpha(\E)$. Since
    \[  \frac{1}{p(x)} = \frac{\alpha}{n}+\frac{1}{q(x)}, \qquad x \in \R^n,
\]
for each $E \in \E$, by the generalized H\"older's inequality (Corollary \ref{gen Holder}) and the definition of $K_0^\alpha(\E)$, we get
    \[ |E|^{-1} \lVert \chi_E \rVert_{\pp} \lVert \chi_E \rVert_{p'(\cdot)} \leq K |E|^{-1} \lVert \chi_E \rVert_{q(\cdot)} \lVert \chi_E \rVert_{\frac{n}{\alpha}} \lVert \chi_E \rVert_{p'(\cdot)} \leq K \Kalconst{\pp}{\E} < \infty. \]
    Since this bound is independent of the choice of $E$, $\pp \in K_0(\E)$. To show $q(\cdot) \in K_0(\E)$, note that since
    \[ \frac{1}{q'(x)} - \frac{1}{p'(x)} = \frac{\alpha}{n}, \hspace{10mm} x \in \R^n, \]
  again by the generalized H\"older's inequality, for all $E \in \E$ we have
    \[ |E|^{-1} \lVert \chi_E \rVert_{\qq} \lVert \chi_E \rVert_{q'(\cdot)} \leq K |E|^{-1} \lVert \chi_E \rVert_{\qq} \lVert \chi_E \rVert_{\frac{n}{\alpha}} \lVert \chi_E \rVert_{p'(\cdot)} \leq K \Kalconst{\pp}{\E} < \infty. \]
    Thus $\pp,q(\cdot) \in K_0(\E)$. 

    The converse follows from Proposition \ref{norm and harmonic mean proposition}. If $\pp, \qq \in K_0(\E)$ then for all $E \in \E$,
    \[ \lVert \chi_E \rVert_{p'(\cdot)} \lVert \chi_E \rVert_{\qq} \lesssim |E|^{\frac{1}{p'_E}} |E|^{\frac{1}{q_E}} = |E|^{1- \frac{\alpha}{n}}. \]
  The last inequality follows since
    \[ \frac{1}{p'_E} + \frac{1}{q_E} = \dashint_E \frac{1}{p'(x)} + \frac{1}{q(x)} \, dx = 1 - \frac{\alpha}{n}. \] 
    Since this bound is independent of the choice of $E$, $\pp \in K_0^\alpha(\R^n)$.
\end{proof}
\begin{corollary} \label{K_0^alpha implies norm equivalence}
    If $\pp \in K_0^\alpha (\E)$ then for all $E \in \E$,
    \begin{gather*}
        \lVert \chi_E \rVert_{\pp} \approx |E|^{\frac{1}{p_E}} \hspace{4mm} , \hspace{4mm} \lVert \chi_E \rVert_{p'(\cdot)} \approx |E|^{\frac{1}{p'_E}}, \\
        \lVert \chi_E \rVert_{q(\cdot)} \approx |E|^{\frac{1}{q_E}} \hspace{4mm} \text{ and } \hspace{4mm} \lVert \chi_E \rVert_{q'(\cdot)} \approx |E|^{\frac{1}{q'_E}}. 
    \end{gather*} 
\end{corollary}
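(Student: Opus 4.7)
The proof is essentially a matter of chaining together Lemma~\ref{K_0^alpha implies K_0 lemma} with Proposition~\ref{norm and harmonic mean proposition}. First I would apply Lemma~\ref{K_0^alpha implies K_0 lemma} to the hypothesis $\pp \in K_0^\alpha(\E)$ to conclude that both $\pp$ and $\qq$ belong to $K_0(\E)$. Next, I would observe that the defining quantity in Definition~\ref{Generalized K_0^alpha Definition} (with $\alpha=0$) is symmetric in $\pp$ and $p'(\cdot)$, so $\pp \in K_0(\E)$ if and only if $p'(\cdot) \in K_0(\E)$, and likewise for $\qq$ and $q'(\cdot)$. Thus all four exponents $\pp, p'(\cdot), \qq, q'(\cdot)$ lie in $K_0(\E)$.

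Once that is established, I would simply invoke Proposition~\ref{norm and harmonic mean proposition} four times, once for each exponent. Applying it to $\pp$ gives $\lVert \chi_E \rVert_{\pp} \approx |E|^{1/p_E}$; applying it to $p'(\cdot)$ gives $\lVert \chi_E \rVert_{p'(\cdot)} \approx |E|^{1/(p')_E} = |E|^{1/p'_E}$ (using the remark right after Definition~\ref{Harmonic Mean Definition} that identifies $(p')_E$ with $p'_E$), and likewise for $\qq$ and $q'(\cdot)$. The implicit constants depend on $K_{\pp}$, $K_{\qq}$, $\Kconst{\pp}{\E}$, and $\Kconst{\qq}{\E}$, all of which are controlled by $\Kalconst{\pp}{\E}$ via the proof of Lemma~\ref{K_0^alpha implies K_0 lemma}.

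There is no real obstacle here; the only thing to verify carefully is the symmetry of the $K_0(\E)$ condition under Hölder conjugation, which is immediate from the definition since the roles of $\pp$ and $p'(\cdot)$ are interchangeable in the product $\lVert \chi_E \rVert_{\pp} \lVert \chi_E \rVert_{p'(\cdot)}$. This completes the proof in just a few lines.
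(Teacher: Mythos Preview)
Your proposal is correct and follows essentially the same approach as the paper's own proof: invoke Lemma~\ref{K_0^alpha implies K_0 lemma} to get $\pp,\qq\in K_0(\E)$, use the symmetry of the $K_0(\E)$ condition under H\"older conjugation, and then apply Proposition~\ref{norm and harmonic mean proposition} to each of the four exponents. The paper's proof is in fact terser than yours, but the logical content is identical.
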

\begin{proof}
     By the symmetry of the definition, $\pp \in K_0(\E)$ if and only if $p'(\cdot) \in K_0(\E)$. Thus, this is a direct consequence of Lemma \ref{K_0^alpha implies K_0 lemma} and Proposition \ref{norm and harmonic mean proposition}.
\end{proof}
We now prove a preliminary result on finding sets with minimal harmonic mean. Along with Corollary \ref{K_0^alpha implies norm equivalence}, this will be an essential tool in the proof of Theorems \ref{Maximal Bounded Exponent Theorem} and \ref{SIO Bounded Exponent Theorem}.
\begin{lemma} \label{minimal harmonic mean lemma}
    Let $\pp \in \mathcal{P}(\R^n)$ and $D \subset \R^n$ be a cube of radius of $R > 0$. Then for each $0 < r \leq R$ and $E \subset \R^n$ satisfying $|D \setminus E| < (2r)^n$ and $p_+(D \cap E) < \infty$, there exists a cube $Q^* \subset D$ of radius $r$ such that for any $m \in \N$ with $mr \leq R$, $p_{Q^* \cap E} \leq p_{Q \cap E}$ for all cubes $Q \subset D$ of radius $mr$.  
\end{lemma}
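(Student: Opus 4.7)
The plan is to choose $Q^*$ as a cube of radius $r$ contained in $D$ that maximizes
\[ \phi(Q) := \dashint_{Q \cap E} \frac{dy}{p(y)} = \frac{1}{p_{Q \cap E}}, \]
so that minimizing the harmonic mean $p_{Q\cap E}$ is equivalent to maximizing $\phi$. The hypothesis $|D\setminus E|<(2r)^n$ is exactly what makes $\phi$ well-defined on every cube $Q\subset D$ of radius at least $r$: any such cube satisfies $|Q\cap E| \geq |Q|-|D\setminus E| > 0$, and together with $p_+(D\cap E)<\infty$ this keeps both the numerator and denominator defining $\phi(Q)$ finite and positive (even allowing $1/p(y)=0$ on any set where $p(y)=\infty$).

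I would then establish the existence of the maximizer by a direct compactness argument. Parametrize cubes of radius $r$ contained in $D$ by their centers $x$, which range over a compact cube of radius $R-r$ (nonempty since $r\leq R$). Using dominated convergence, with $1/p\leq 1$ and $\chi_D$ as a dominating function and noting that $\chi_{Q(x_k,r)}\chi_E\to \chi_{Q(x_0,r)}\chi_E$ pointwise off a null set, both $\int_{Q(x,r)\cap E} dy/p(y)$ and $|Q(x,r)\cap E|$ are continuous functions of $x$. Because $|Q(x,r)\cap E|$ is bounded below by the positive constant $(2r)^n - |D\setminus E|$, the ratio $\phi$ is continuous, and so it attains its maximum at some cube $Q^*$ of radius $r$ in $D$.

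Finally, the averaging identity finishes the proof. Fix $m\in\N$ with $mr\leq R$ and an arbitrary cube $Q\subset D$ of radius $mr$. Partition $Q$ into $m^n$ pairwise disjoint sub-cubes $Q_1,\ldots,Q_{m^n}$ of radius $r$, each automatically contained in $D$. Each $|Q_i \cap E| > 0$ by the same reasoning as above, so
\[ \phi(Q) = \frac{\sum_{i} \int_{Q_i \cap E} dy/p(y)}{\sum_{i} |Q_i \cap E|} = \sum_{i} \lambda_i\, \phi(Q_i), \qquad \lambda_i := \frac{|Q_i \cap E|}{|Q \cap E|}, \]
exhibits $\phi(Q)$ as a convex combination of the $\phi(Q_i)$. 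Hence $\phi(Q)\leq \max_i \phi(Q_i) \leq \phi(Q^*)$, which is exactly $p_{Q^* \cap E}\leq p_{Q\cap E}$. The only substantive step is the continuity of $\phi$ leading to the attained maximum defining $Q^*$; the rest is the standard observation that a convex combination is bounded by its largest entry. The hypothesis $|D\setminus E|<(2r)^n$ plays a double role here: ensuring all relevant intersections with $E$ have positive measure (so $\phi$ makes sense), and providing the uniform lower bound that delivers continuity.
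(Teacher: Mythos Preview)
Your proposal is correct and follows essentially the same approach as the paper: a compactness/continuity argument on the centers of radius-$r$ subcubes to produce the minimizer $Q^*$, followed by the convex-combination identity over the $m^n$ dyadic subcubes of a radius-$mr$ cube. The only cosmetic difference is that you work with $\phi = 1/p_{Q\cap E}$ and maximize, whereas the paper works directly with $p_{Q\cap E}$ and minimizes; your justification of continuity via dominated convergence is slightly more explicit than the paper's appeal to ``a standard property of the integral.''
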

\begin{proof}
    We first prove the existence of such a cube for $m = 1$. Given $\pp \in \mathcal{P}(\R^n)$ and a cube $D = D(x,R)$, let $0 < r \leq R$ and $E \subset \R^n$ satisfy the above hypotheses. Define the cube $F = F(x,R-r) \subset D$ and the function $f : F \to [1,\infty)$ by 
    \begin{gather*}
        f(y) = \left( \frac{1}{|Q(y,r) \cap E|} \int_{Q(y,r) \cap E} \frac{1}{p(x)} \, dx\right)^{-1} = p_{Q(y,r) \cap E}.
    \end{gather*}
    Since $|D \cap E^c| < 2^n r^n$ implies $|Q(y,r) \cap E| > 0$,  and since $p_+(D \cap E) < \infty$, the function $f$ is well defined. By a standard property of the integral we have that $f$ is continuous. Because the domain is compact, there exists a minimum $a \in F$. Thus, there exists a cube $Q^*(a,r) \subset D$ such that for any other cube $Q(b,r) \subset D$ we have $p_{Q^*(a,r) \cap E} = f(a) \leq f(b) = p_{Q(b,r) \cap E}$. This proves the case when $m = 1$.\\
    \indent Now let $m \in \N$ satisfy $mr \leq R$ and choose any cube $K = K(c,mr) \subset D$. Denote $Q^* = Q^*(a,r)$. Form the collection of $m^n$ dyadic subcubes of $K$, $K_1, \dots , K_{m^n}$, with side length $r$. Then
    \begin{multline*}
        \frac{1}{p_{K \cap E}} = \frac{1}{|K \cap E|} \int_{K \cap E} \frac{1}{p(x)} \, dx 
        = \frac{1}{|K \cap E|} \sum_{i=1}^{m^n} \int_{K_i \cap E} \frac{1}{p(x)} \, dx \\
        = \sum_{i=1}^{m^n} \frac{|K_i \cap E|}{|K \cap E|} \frac{1}{|K_i \cap E|} \int_{K_i \cap E} \frac{1}{p(x)} \, dx 
        = \sum_{i=1}^{m^n} \frac{|K_i \cap E|}{|K \cap E|} \frac{1}{p_{K_i \cap E}} 
        \leq \frac{1}{p_{Q^* \cap E}}.
    \end{multline*}
   Thus, $p_{Q^* \cap E} \leq p_{K \cap E}$.
\end{proof}
We remark that Lemma \ref{minimal harmonic mean lemma} is true only for integer scalings. 
\begin{example}
    Let $Q = Q(0,1) \subset \R^2$, $E = \R^2$, and fix $\frac{1}{2} < r < 1$. Let $Q' = Q(0,2r-1)$. Define $p(x) = 2 \chi_{Q'}(x) + \chi_{Q \setminus Q'}(x)$. Notice that for any cube $Q(x,r) \subset Q$ we have $Q' \subset Q(x,r)$. It follows that the harmonic mean of every such cube is the same, and hence the minimum is precisely 
    \begin{gather*}
        p_{Q(x,r)} = \left (\frac{1}{4r^2}\int_{Q(x,r)} \frac{1}{p(x)} \, dx \right)^{-1} = \frac{2r^2}{4r - 2r^2 - 1}.
    \end{gather*}
    However,
    \begin{gather*}
        p_Q = \left(\frac{1}{4} \int_Q \frac{1}{p(x)} \, dx \right)^{-1} = \frac{2}{4r-4r^2+1}.
    \end{gather*}
    Direct computations show $p_Q \leq p_{Q(x,r)}$ which contradicts the conclusion of Lemma \ref{minimal harmonic mean lemma}.
\end{example}
\section{Fractional Maximal Operators} \label{Fractional Maximal Operators}
In this section we prove Theorems \ref{Maximal K_0^alpha theorem} and \ref{Maximal Bounded Exponent Theorem}.

\begin{proof}[Proof of Theorem \ref{Maximal K_0^alpha theorem}]
 To show that $\pp \in K_0^\alpha(\R^n)$ whenever $M_\alpha : L^{\pp}(\R^n) \to L^{\qq,\infty}(\R^n)$, we will prove the fractional averaging operators are uniformly bounded since, by Proposition \ref{K_0^alpha and A_Q^alpha proposition}, this is equivalent to $\pp \in K_0^\alpha(\R^n)$.   Fix a cube $Q \subset \R^n$. By definition we have
    \[ M_\alpha f(x) \geq |Q|^{\frac{\alpha}{n}} \dashint_Q |f(y)| \, dy \]
    for all $x \in Q$. Thus, for $0 < \lambda < |Q|^{\frac{\alpha}{n}} \dashint_Q f \, dy$, our assumption that $M_\alpha : L^{\pp} \to L^{q(\cdot),\infty}$ implies
    \[ \lVert \chi_Q \rVert_{q(\cdot)} \leq \lVert \chi_{\{M_\alpha f  > \lambda\}} \rVert_{q(\cdot)} \leq \frac{\lVert M_\alpha \rVert_{L^{\pp} \to L^{\qq,\infty}}}{\lambda} \lVert f \rVert_{\pp}. \]
  If we take the supremum over all such $\lambda$, we get
    \begin{gather} \label{off-diagonal K_0^alpha ineq}
        \lVert A_Q^\alpha \rVert_{q(\cdot)} 
        = |Q|^{\frac{\alpha}{n}} \left(\dashint_Q f \, dy \right) \lVert \chi_Q \rVert_{q(\cdot)} 
        \leq \lVert M_\alpha \rVert_{L^{\pp} \to L^{\qq,\infty}} \lVert f \rVert_{\pp}.
    \end{gather}  
    Since the constant $\lVert M_\alpha \rVert_{L^{\pp} \to L^{\qq,\infty}}$ on the righthand side does not depend on our choice of $Q$, the fractional averaging operators $A_Q^\alpha$ are uniformly bounded from $L^{\pp}(\R^n) \to L^{q(\cdot)}(\R^n)$, and so, by Proposition \ref{K_0^alpha and A_Q^alpha proposition},   $\pp \in K_0^\alpha (\R^n)$. 
\end{proof}
\begin{remark}
    By our proof of Theorem \ref{Maximal K_0^alpha theorem}, if $M_\alpha : L^{\pp} \to L^{\qq,\infty}$, then
    \[ \sup_{Q} \lVert A_Q^\alpha \rVert_{L^{\pp} \to L^{\qq}} \leq \lVert M_\alpha \rVert_{L^{\pp} \to L^{\qq,\infty}}. \]
    Thus, Proposition \ref{K_0^alpha and A_Q^alpha proposition} yields
    \[ \Kalconst{\pp}{\R^n} \leq 2 \lVert M_\alpha \rVert_{L^{\pp} \to L^{\qq,\infty}}. \]
\end{remark}
We now prove an analog to Theorem \ref{Maximal K_0^alpha theorem} with the generalized $K_0^\alpha(\E)$ condition (Definition \ref{Generalized K_0^alpha Definition}). It requires an additional assumption on $\mathcal{E}$, which we call the cube property.
\begin{definition} \label{cube property}
    We say $\mathcal{E}$ has the cube property if for each $E \in \mathcal{E}$ there exists a cube $Q$ such that $E \subset Q$ and $|E| \geq \frac{1}{2} |Q|$.
\end{definition}
\begin{prop} \label{Generalized K_0^alpha Theorem}
    Given  $0 \leq \alpha \leq n$ and $\pp \in \mathcal{P}(\R^n)$ such that $p_+<\frac{n}{\alpha}$, define $q(\cdot)$ by~\eqref{q dot defn}.  If $\mathcal{E}$ has the cube property and   $M_\alpha : L^{\pp}(\R^n) \to L^{q(\cdot),\infty}(\R^n)$, then $\pp \in K_0^\alpha(\mathcal{E})$.  
\end{prop}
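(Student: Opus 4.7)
The plan is to combine the already-established cube version of this result (Theorem \ref{Maximal K_0^alpha theorem}) with the cube property of $\mathcal{E}$. The hypothesis $M_\alpha : L^{\pp}(\R^n) \to L^{\qq,\infty}(\R^n)$ immediately yields $\pp \in K_0^\alpha(\R^n)$ by Theorem \ref{Maximal K_0^alpha theorem}, so every cube $Q\subset\R^n$ satisfies
\[ |Q|^{\frac{\alpha}{n}-1} \|\chi_Q\|_{p'(\cdot)} \|\chi_Q\|_{\qq} \leq \Kalconst{\pp}{\R^n} < \infty. \]
It remains to transfer this bound from cubes to the sets in $\mathcal{E}$.

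Fix $E \in \mathcal{E}$ and let $Q$ be a cube provided by Definition \ref{cube property}, so that $E \subset Q$ and $|E| \geq \tfrac{1}{2}|Q|$. The first step is to use monotonicity of the variable Lebesgue norm with respect to pointwise domination: since $\chi_E \leq \chi_Q$, we have
\[ \|\chi_E\|_{p'(\cdot)} \leq \|\chi_Q\|_{p'(\cdot)} \quad\text{and}\quad \|\chi_E\|_{\qq} \leq \|\chi_Q\|_{\qq}. \]
The second step handles the prefactor: since $\frac{\alpha}{n}-1<0$ and $|E|\geq \tfrac{1}{2}|Q|$, raising to the negative power reverses the inequality and gives
\[ |E|^{\frac{\alpha}{n}-1} \leq \bigl(\tfrac{1}{2}|Q|\bigr)^{\frac{\alpha}{n}-1} = 2^{1-\frac{\alpha}{n}}\,|Q|^{\frac{\alpha}{n}-1}. \]

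Combining these two observations, for each $E \in \mathcal{E}$,
\[ |E|^{\frac{\alpha}{n}-1} \|\chi_E\|_{p'(\cdot)} \|\chi_E\|_{\qq} \leq 2^{1-\frac{\alpha}{n}}\,|Q|^{\frac{\alpha}{n}-1} \|\chi_Q\|_{p'(\cdot)} \|\chi_Q\|_{\qq} \leq 2\,\Kalconst{\pp}{\R^n}. \]
Taking the supremum over $E\in\mathcal{E}$ gives $\Kalconst{\pp}{\mathcal{E}} \leq 2\,\Kalconst{\pp}{\R^n} < \infty$, so $\pp \in K_0^\alpha(\mathcal{E})$, as desired.

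There is essentially no obstacle here beyond bookkeeping; the proposition is a routine corollary of Theorem \ref{Maximal K_0^alpha theorem}, and its real purpose is to broaden the class of test sets available when we later apply the $K_0^\alpha$ machinery to more intricate geometric configurations (e.g., half-cubes and other "almost-cube" sets that appear in the proof of Theorems \ref{Maximal Bounded Exponent Theorem} and \ref{SIO Bounded Exponent Theorem}). The only content beyond Theorem \ref{Maximal K_0^alpha theorem} is the monotonicity of the $L^{\pp}$ norm on characteristic functions and the elementary volume comparison $|E|\approx |Q|$ built into the cube property.
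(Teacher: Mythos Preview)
Your proof is correct and follows essentially the same approach as the paper: invoke Theorem~\ref{Maximal K_0^alpha theorem} to obtain $\pp\in K_0^\alpha(\R^n)$, then for each $E\in\mathcal{E}$ use the cube $Q\supset E$ with $|E|\geq\tfrac12|Q|$, monotonicity of the norm, and the comparison $|E|^{\frac{\alpha}{n}-1}\leq 2^{1-\frac{\alpha}{n}}|Q|^{\frac{\alpha}{n}-1}$ to bound the $K_0^\alpha(\mathcal{E})$ constant by a fixed multiple of $\Kalconst{\pp}{\R^n}$. Your write-up is slightly more explicit about the monotonicity and prefactor steps, but the argument is identical in substance.
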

\begin{proof}
    By Theorem \ref{Maximal K_0^alpha theorem}  we have that $\pp \in K_0^\alpha(\R^n)$. Fix $E \in \mathcal{E}$ and a cube $Q$ such that $E \subset Q$ and $|E| \geq \frac{1}{2}|Q|$. Then
    \begin{align*}
        |E|^{\frac{\alpha}{n}-1} \lVert \chi_E \rVert_{p'(\cdot)} \lVert \chi_E \rVert_{q(\cdot)} \leq 2^{1 - \frac{\alpha}{n}} |Q|^{\frac{\alpha}{n}-1} \lVert \chi_Q \rVert_{p'(\cdot)} \lVert \chi_Q \rVert_{q(\cdot)} \leq 2^{1-\frac{\alpha}{n}} \Kalconst{\pp}{\R^n}.
    \end{align*}
    Since this bound is uniform for all $E \in \mathcal{E}$, we have that $\pp \in K_0^\alpha(\mathcal{E})$. 
\end{proof}
\begin{corollary} \label{fractional strong type gives norm equivalence corollary}
    Given  $0 \leq \alpha \leq n$ and $\pp \in \mathcal{P}(\R^n)$ such that $p_+<\frac{n}{\alpha}$, define $q(\cdot)$ by~\eqref{q dot defn}.
    If $\mathcal{E}$ has the cube property and  $M_\alpha : L^{\pp}(\R^n) \to L^{q(\cdot)}(\R^n)$, then for all $E \in \mathcal{E}$,
    \begin{gather*}
        \lVert \chi_E \rVert_{\pp} \approx |E|^{\frac{1}{p_E}}, \hspace{4mm} \lVert \chi_E \rVert_{p'(\cdot)} \approx |E|^{\frac{1}{p'_E}}, \\
        \lVert \chi_E \rVert_{q(\cdot)} \approx |E|^{\frac{1}{q_E}}, \hspace{4mm} \text{ and } \hspace{4mm} \lVert \chi_E \rVert_{q'(\cdot)} \approx |E|^{\frac{1}{q'_E}}.
    \end{gather*} 
\end{corollary}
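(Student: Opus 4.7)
The proof amounts to chaining three results already in hand. First, the strong-type assumption implies the weak-type inequality, since for any $\lambda > 0$ and $f \in L^\pp(\R^n)$,
\[
\lambda \lVert \chi_{\{M_\alpha f > \lambda\}} \rVert_\qq \leq \lVert M_\alpha f \rVert_\qq \leq \lVert M_\alpha \rVert_{L^\pp \to L^\qq} \lVert f \rVert_\pp,
\]
so $M_\alpha : L^\pp(\R^n) \to L^{\qq,\infty}(\R^n)$. Second, since $\E$ has the cube property, Proposition~\ref{Generalized K_0^alpha Theorem} applies and yields $\pp \in K_0^\alpha(\E)$. Third, the desired conclusion is exactly the content of Corollary~\ref{K_0^alpha implies norm equivalence}, which has been formulated for an arbitrary collection $\E$; invoking it produces the four norm equivalences on every $E \in \E$.

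For clarity, one may unpack that last step into its component arguments: from $\pp \in K_0^\alpha(\E)$ one obtains $\pp, \qq \in K_0(\E)$ via Lemma~\ref{K_0^alpha implies K_0 lemma}, hence also $p'(\cdot), q'(\cdot) \in K_0(\E)$ by the symmetry of the $K_0$ definition under passage to H\"older conjugates, and then Proposition~\ref{norm and harmonic mean proposition} applied to each of the four exponents delivers the four equivalences.

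I expect no substantive obstacle. The only point worth double-checking is that Corollary~\ref{K_0^alpha implies norm equivalence} is indeed stated collection-wise, so that no further argument is needed to pass from cubes to arbitrary $E \in \E$; this has been arranged in Section~\ref{Averagin Operators and the K_0^alpha Condition}, where the cube property is what allows Proposition~\ref{Generalized K_0^alpha Theorem} to transfer the global $K_0^\alpha(\R^n)$ bound to $K_0^\alpha(\E)$. All remaining work is bookkeeping on constants, which we do not need to track to get the claimed $\approx$ relations.
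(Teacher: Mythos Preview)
Your proof is correct and follows the same route as the paper's: strong-type implies weak-type (Chebyshev), then Proposition~\ref{Generalized K_0^alpha Theorem} gives $\pp \in K_0^\alpha(\E)$, and Corollary~\ref{K_0^alpha implies norm equivalence} finishes. The paper additionally cites Theorem~\ref{Maximal K_0^alpha theorem}, but that is already absorbed into the proof of Proposition~\ref{Generalized K_0^alpha Theorem}, so your slightly streamlined citation chain is equally valid.
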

\begin{proof}
   By Chebyshev's inequality,  strong-type bounds imply weak-type bounds.  Thus, by Theorem \ref{Maximal K_0^alpha theorem} and Proposition~\ref{Generalized K_0^alpha Theorem} we have that  $\pp \in K_0^\alpha(\E)$. Hence, this result follows from Corollary \ref{K_0^alpha implies norm equivalence}.
\end{proof}
Our motivation for the proof of Theorem \ref{Maximal Bounded Exponent Theorem} is the following example, which shows that for any $0 \leq \alpha < n$, $M_\alpha : L^1(\R^n) \not \to L^{\frac{n}{n-\alpha}}(\R^n)$.
\begin{example} \label{M not bdd on L1 example}
    \normalfont Fix $\alpha$, $0 \leq \alpha < n$, let $Q = Q(0,1/2)$, and define $f = \chi_{Q}$. It is easy to see that $f \in L^1(\R^n)$ and $\lVert f \rVert_1 = 1$. For $x \not \in Q$, notice that $Q \subset Q_x = Q\left(0, |x| \right)$. Therefore,
    \[ M_\alpha f(x) \geq |Q_x|^{\frac{\alpha}{n}-1} \int_{Q_x} |f(x)| \, dx = (2^n |x|^n)^{\frac{\alpha}{n}-1} = 2^{\alpha - n} |x|^{\alpha - n}. \]
    Hence by a change to polar coordinates, 
    \begin{multline} \label{eqn:key-est}
        \lVert M_\alpha f \rVert_{\frac{n}{n-\alpha}}^{\frac{n}{n-\alpha}} = \int_{\R^n} M_\alpha f(x)^{\frac{n}{n-\alpha}} \, dx \\
        \geq \int_{\R^n \setminus Q} (2^{\alpha - n} |x|^{\alpha - n})^{\frac{n}{n-\alpha}} \, dx  \geq \frac{C(n)}{2^{n}} \int_{\sqrt{n}}^\infty r^{-1} \, dr 
        = \infty.
    \end{multline} 
Thus $M_\alpha f \not \in L^{\frac{n}{n-\alpha}}(\R^n)$ and so $M : L^1(\R^n) \not \to L^{\frac{n}{n-\alpha}}(\R^n)$. 
\end{example}

The key fact used to construact Example \ref{M not bdd on L1 example} is that $|x|^{\alpha - n} \chi_{\R^n \setminus Q} \not \in L^{\frac{n}{n-\alpha}}(\R^n)$. However, we do not have in general that $|x|^{\alpha - n} \chi_{\R^n \setminus Q} \not \in L^{\qq}(\R^n)$ when $p_- = 1$, i.e. when $q_- = \frac{n}{n-\alpha}$. Indeed, given $\alpha$, $0 \leq \alpha < n$, $p(x) = \chi_{Q} + \frac{n+1}{\alpha+1} \chi_{\R^n \setminus Q}$ is an exponent function satisfying $p_- = 1$ and $|x|^{\alpha - n} \in L^{\qq}(\R^n)$. 

To work around this, we use the fact that  $p_- = 1$ and apply the Lebesgue differentiation theorem to construct a sequence of cubes $D_k$ where the exponent function approaches 1 on a large proportion of the cube. This proportion increases with the index $k$. On each cube $D_k$ we define a function $f_k$ to be the characteristic function of a smaller cube intersected with the subset of $D_k$ where the exponent is near 1. We choose this cube so that it has minimal harmonic mean with respect to cubes of the same size contained in $D_k$. By Corollary \ref{fractional strong type gives norm equivalence corollary} this is equivalent to a cube with minimal norm up to uniform constants. Starting from the support of $f_k$ we next construct a chain of disjoint cubes that form what we call $(t,u)$ pairs and bound the integral of $M_\alpha f_k$ on these cubes by means of  a pointwise estimate of $M_\alpha f_k$ on each cube in the chain.   Using these bounds,  we show that by increasing the length of the chain of disjoint cubes, we get an estimate analogous to~\eqref{eqn:key-est} and so get that the modular of $M_\alpha f_k$ can be made arbitrarily large as $k\rightarrow \infty$.  This shows that $M_\alpha$ does not satisfy strong 
$(\pp,\qq)$ bounds.

\begin{remark}
    We want to emphasize that the proof we give for the fractional maximal operator below is somewhat more complicated than is aboslutely necessary.  In particular, we do not need to use Lemma~\ref{Ptwise Average Estimate Lemma}, and in our construction of the cubes in the proof we could use the standard orthonormal basis rather than constructing a basis around a specific vector $u_1$.  We have chosen this approach since it essentially allows us to unite the proofs for the fractional maximal operator and for fractional CZOs and thus avoid doing the construction twice, first in the simpler case and then in full generality.
\end{remark}

\medskip

\begin{definition} \label{QP Pair Definition}
    Given $t \in \R$ and a vector $u \in \R^n$, we say the cubes $Q = Q(x,r)$ and $P = Q(y,r)$ form an $(t,u)$ pair if $y = x + tr\sqrt{n}u$. 
\end{definition}

The next lemma gives us a decay estimate for the fractional maximal operator that we can apply to the chain of cubes we will construct.

\begin{lemma} \label{Ptwise Average Estimate Lemma}
    Given $0 \leq \alpha < n$, $t \geq 4$, and a unit vector $u$, let $Q$ and $P$ be cubes of radius $r > 0$ that form a $(t,u)$ pair. Then for a measurable function $f$ and for all $y \in P$, 
    \begin{gather}
        M_\alpha f(y) \geq \left(\frac{(t+2)\sqrt{n}}{2}\right)^{\alpha-n} |Q|^{\frac{\alpha}{n}} \dashint_{Q} |f(x)| \, dx.
    \end{gather}
\end{lemma}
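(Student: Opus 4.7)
The plan is to find, for each $z \in P$, an axis-parallel cube $R = R(z)$ containing both $z$ and $Q$ whose volume is comparable to $|Q|$, and then plug $R$ into the definition of $M_\alpha f(z)$. I would take $R$ to be the smallest such cube and write $\rho$ for its radius. Since $z \in R$ and $Q \subset R$, the lower bound for the fractional average immediately yields
\[
M_\alpha f(z) \ \geq\ |R|^{\alpha/n}\dashint_R |f(w)|\,dw \ \geq\ |R|^{\alpha/n - 1}\int_Q |f(x)|\,dx \ =\ \left(\frac{\rho}{r}\right)^{\alpha - n} |Q|^{\alpha/n}\dashint_Q |f(x)|\,dx,
\]
using $|R|/|Q| = (\rho/r)^n$. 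Since $\alpha - n < 0$, the map $s \mapsto s^{\alpha - n}$ is decreasing on $(0,\infty)$, so the lemma reduces to the purely geometric claim that $\rho/r \leq (t+2)\sqrt{n}/2$.

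For the radius estimate I would argue coordinate by coordinate. The minimal closed interval containing $[x_i - r,\, x_i + r] \cup \{z_i\}$ has length $\max(2r,\, |z_i - x_i| + r)$, so the side length $2\rho$ of $R$ satisfies
\[
2\rho \ \leq\ \max\bigl(2r,\, \|z - x\|_\infty + r\bigr).
\]
Since $z \in P = Q(y,r)$ we have $\|z - y\|_\infty \leq r$, and since $y - x = tr\sqrt{n}\,u$ with $|u| = 1$ we get the crude bound $\|y - x\|_\infty \leq |y - x| = tr\sqrt{n}$. The triangle inequality then produces $\|z - x\|_\infty \leq tr\sqrt{n} + r$, and substituting (and noting $tr\sqrt{n} + 2r \geq 2r$) yields $2\rho \leq tr\sqrt{n} + 2r$. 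Hence $\rho/r \leq (t\sqrt{n} + 2)/2 \leq (t+2)\sqrt{n}/2$, where the last step uses $\sqrt{n} \geq 1$. Combined with the display above, this gives exactly the claimed lower bound.

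The argument is essentially geometric bookkeeping; the one step that requires care is the passage from the Euclidean displacement $|y-x| = tr\sqrt{n}$ to the $\ell^\infty$ displacement $\|y-x\|_\infty$. Because the direction vector $u$ is an arbitrary unit vector and need not be axis-aligned, the only uniform bound available loses a factor of $\sqrt{n}$, and it is precisely this factor that surfaces in the constant $(t+2)\sqrt{n}/2$ appearing in the statement. I note in passing that the inequality actually goes through for every $t \geq 0$; the hypothesis $t \geq 4$ seems not to be needed for this lemma itself, but is presumably imposed with an eye toward the chain-of-cubes constructions where the lemma will be applied.
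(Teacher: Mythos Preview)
Your proof is correct and follows essentially the same approach as the paper's: enclose both the evaluation point and $Q$ in a single large axis-parallel cube and bound $M_\alpha f$ from below by the fractional average over that cube. The only cosmetic difference is that the paper uses one fixed cube $Q_B = Q\!\left(\tfrac{x_0+y_0}{2},\,\tfrac{(t+2)r\sqrt{n}}{2}\right)$ containing all of $P$ and $Q$ at once, rather than a $z$-dependent minimal cube; this spares the coordinate-by-coordinate bookkeeping but produces the same constant. Your observation that the hypothesis $t\geq 4$ is not actually used here is also correct.
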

\begin{proof}
    Fix $t \geq 4$, a unit vector $u$, and cubes $Q = Q(x_0,r)$ and $P = Q(y_0,r)$ that form a $(t,u)$ pair. By the triangle inequality, we have that 
    \[ Q,\,P \subset Q_B = Q\left(\frac{x_0 + y_0}{2}, \frac{(t+2)r\sqrt{n}}{2}\right) \]
    and
    \[ |Q| = |P| = \left(\frac{2}{(t+2)\sqrt{n}}\right)^n |Q_B|. \] 
    Thus, for any measurable function $f$ and $y \in P$,
    \[ M_\alpha f(y) \geq |Q_B|^{\frac{\alpha}{n} - 1} \int_{Q_B} f(x) \, dx \geq \left(\frac{(t+2)\sqrt{n}}{2}\right)^{\alpha-n} |Q|^{\frac{\alpha}{n}} \dashint_Q |f(x)| \, dx. \]
\end{proof}
We now define some useful notation and proceed with the proof of Theorem \ref{Maximal Bounded Exponent Theorem}.   
\begin{definition}
    Given an orthonormal basis $u_1, \dots , u_n$ of $\R^n$, let $Q = Q(x,r) \subset \R^n$ be a cube with each of its $n-1$ dimensional boundary hyperplanes normal to some $u_i$, $1 \leq i \leq n$. We define the lower corner of $Q$ to be the point
    \begin{gather*}
        lc \, (Q) = x - \sum_{i=1}^n r u_i.
    \end{gather*}
\end{definition}
\begin{proof}[Proof of Theorem \ref{Maximal Bounded Exponent Theorem}]
    We prove this by contradiction.  Suppose  to the contrary that $p_- = 1$ but  $M_\alpha : L^{\pp}(\R^n) \to L^{\qq}(\R^n)$. Then there exists a constant $C > 1$ such that for all $f \in L^{\pp}$, $\lVert M_\alpha f \rVert_{\pp} \leq C \lVert f \rVert_{\qq}$. We will show this leads to a contradiction. Fix a unit vector $u_1$. (Note that we could choose $e_1$, but we make our construction more general than necessary so that it may be applied to fractional CZOs in the next section.)
    
    Fix $t > 4$ and for each $k \in \N$ let
\begin{gather} \label{beta_k}
    \beta_k = \frac{n^2k + n}{n^2k+\alpha} > 1.
\end{gather}
Define $E_k = \{x \in \R^n : p(x) < \beta_k\}$. Since $p_- = 1$, this set has positive measure and hence, by the Lebesgue differentiation theorem applied to $\chi_{E_k}$, there exists $x_k \in E_k$ such that
\begin{gather*}
    \lim_{r \to 0^+} \frac{|B(x_k,r) \cap E_k|}{|B(x_k,r)|} = 1.    
\end{gather*}
Thus, for each $k$ there exists $S_k$, $0 < S_k < \frac{1}{2}$, such that
\begin{gather*} 
    \frac{|B(x_k,S_k) \cap E_k|}{|B(x_k,S_k)|} > 1 - \frac{2^{-n(k-1)}} {v(n) (tn+2\sqrt{n})^{n}},
\end{gather*}
where $v(n)$ is the measure of an $n$-dimensional unit ball. Let $R_k = n^{-1/2} S_k$ and let $D_k = Q_{u_1}(x_k,R_k)$. Here $Q_{u_1}(x_k,R_k)$ denotes a closed cube centered at $x_k$ with radius $R_k$ which has $u_1$ as a normal vector to one of its $n-1$ dimensional boundary hyperplanes. Then $D_k \subset B(x_k,S_k)$ and 
\[ \frac{|D_k \setminus E_k|}{|D_k|} \leq \frac{|B(x_k,S_k) \setminus E_k|}{\frac{2^n}{v(n) \sqrt{n}^n} |B(x_k,S_k)|} < \frac{2^{-n(k-1)}} {v(n) (tn+2\sqrt{n})^{n}} \cdot \frac{v(n) \sqrt{n}^n}{2^n} =  \frac{2^{-nk}}{(t\sqrt{n} + 2)^n}. \] 
This implies 
\begin{gather} \label{percent Q_k in E_k}
    \frac{|D_k \cap E_k|}{|D_k|} > 1 - 2^{-nk}{(t\sqrt{n}+2)}^{-n}.
\end{gather}
Now let 
\begin{gather*}
    r_k = \frac{R_k}{2^{k-1}(t\sqrt{n}+2)}.
\end{gather*}
Define $Q_1^{k} = Q_{u_1}(y_k, r_k) \subset Q_k$ to be the cube such that $Q^k_1 \cap E_k$ has the minimal harmonic mean of all cubes contained in $D_k$ with radius $r_k$. The existence of such a cube is guaranteed by Lemma \ref{minimal harmonic mean lemma}. Let $u_2, \dots , u_n$ be a collection of vectors such that $u_1, \dots , u_n$ forms an orthonormal basis of $\R^n$ with each $u_i$ normal to a face of $D_k$ and $y_k + R_k u_i \in D_k$ for each $i$, $1 \leq i \leq n$. This may involve replacing $u_1$ by $-u_1$, but this has no effect on our arguments. The following image depicts the choice of $u_1$ and $u_2$ in $\R^2$. \\ 
\begin{center}
\begin{tikzpicture}[scale=0.8]
        \draw[rotate around = {10:(5,5)}] (0,0) rectangle (10,10);
            \node[] at (0.25,10) {\Large $D_k$};
        \draw[rotate around = {10:(5,5)}] (0,5) -- (10,5);
        \draw[rotate around = {10:(5,5)}] (5,0) -- (5,10);
        \draw[rotate around = {10:(2.25,2.25)}] (1.5,1.5) rectangle (3,3);
            \node[rotate around = {10:(2.25,2.25)}] at (0.6,2.5) {\large $Q^k_1$};
        \draw (2.25,2.25) circle[radius=1.5pt]
            node[pos=0.5, above=56pt, right=55pt] {\large $y_k$};
        \fill (2.25,2.25) circle[radius=1.5pt];
        \draw[thick,->,rotate around = {10:(2.25,2.25)}] (3,2.25) -- (4,2.25)
            node[pos=0.5, below=1pt] {\large $u_1$};
        \draw[thick,->,rotate around = {10:(2.25,2.25)}] (2.25,3) -- (2.25,4)
            node[pos=0.5, right=-1pt] {\large $u_2$};
        \draw (7.17404,3.11824) circle[radius=1.5pt]
            node[pos=0.5, above=80.5pt, right=185pt] {\large $y_k + R_k u_1$};
        \fill (7.17404,3.11824) circle[radius=1.5pt];
        \draw (1.38176,7.17404) circle[radius=1.5pt]
            node[pos=0.5, above=195.5pt, right=19.9pt] {\large $y_k + R_k u_2$};
        \fill (1.38176,7.17404) circle[radius=1.5pt];
        \draw[ultra thick, decorate, decoration = {calligraphic brace, amplitude=8pt},rotate around = {10:(5,5)}] (5.25,5.25) -- (9.75,5.25)
            node[pos=0.49, above=10pt] {\large $R_k$};
        \draw[ultra thick, decorate, decoration = {calligraphic brace, mirror,  amplitude=5pt},rotate around = {10:(2.5,2.5)}] (1.5,1.44) -- (2.9,1.44)
            node[pos=0.6, below=6pt] {$2 r_k$};
\end{tikzpicture}
\end{center}
\vspace{2mm}
%

Let $r_j^k = 2^{j-1} r_k$ and, for each $k \in \N$ and $1 \leq j \leq k$, define $Q_j^k$ to be the unique cube with radius $r_j^k$, lower corner $lc \, (Q_j^k) = lc \, (Q_1^k)$, and each vector $u_i$ is normal to one of its $n-1$ dimensional boundary hyperplanes. For a fixed $k$, the $Q_j^k$ form a sequence of increasing cubes, i.e., $Q_1^k \subset Q_2^k \subset \cdots \subset Q_k^k$. Thus, if $Q_k^k \subset D_k$, it follows that $Q_j^k \subset D_k$ for each $j$. We will prove this inclusion.

Fix a point $p \in Q_k^k$. We will show that $p \in D_k$ and hence, $Q_k^k \subset D_k$. Since $u_1, \dots , u_n$ form an orthonormal basis of $\R^n$, \\[-4mm]
%
\begin{center}
\begin{tikzpicture}[scale=0.8]]
        \draw[rotate around = {10:(5,5)}] (0,0) -- (0,6);
        \draw[rotate around = {10:(5,5)}] (0,0) -- (6,0);
        \node[] at (1.25,5) {\large $D_3$};
        \draw[rotate around = {10:(5,5)}] (0.75,0.75) rectangle (1.75,1.75)
            node[pos=0.5] {$Q^3_1$};
        \draw[rotate around = {10:(5,5)}] (0.75,0.75) rectangle (2.75,2.75)
            node[pos=0.7] {$Q^3_2$};
        \draw[rotate around = {10:(5,5)}] (0.75,0.75) rectangle (4.75,4.75)
            node[pos=0.7] {$Q^3_3$};
        \draw[rotate around = {10:(5,5)}] (0.75,0.75) circle[radius=1.5pt];
            \node[] at (1.6,-0.25) {\footnotesize{$lc (Q_1^k)$}};
        \fill[rotate around = {10:(5,5)}] (0.75,0.75) circle[radius=1.5pt];
\end{tikzpicture}
\\ \ \\
    Choice of $Q_j^k$ in $\R^2$ for $k=3$.
\end{center}
%
we may uniquely represent $p$ as $p = lc \, (Q_1^k) + s_1 u_1 + \cdots + s_n u_n$ for some $s_1, \dots , s_n \in \R$. Because $p \in Q_k^k$, we have that $0 \leq s_i \leq 2r_k^k$ for each $1 \leq i \leq n$. Notice that $2r^k_k = 2^k r_k < R_k$. Notice by our choice of the basis, for any collection of scalars $a_1 , \dots , a_n$ satisfying $0 \leq a_i \leq R_k$, we have that the point $lc \, (Q_1^k) + \sum_i a_i u_i \in D_k$. This implies that $p \in D_k$ and so $Q_j^k \subset D_k$ for each $j$. 

Now for each $Q_j^k$, let $P_j^k$ be the corresponding cube that forms a $(t,u_1)$ pair. We claim that for a fixed $k$, each cube $P_j^k$ is contained in $D_k$ and the $P_j^k$ are pairwise disjoint with respect to $j$. For example, in $\R^2$ and for $k=3$ and $t=5$, this would take the form of the following diagram: \vspace{2mm}
\begin{center}
    \begin{tikzpicture}
        \draw[rotate around = {10:(5,5)}] (0,0) -- (0,4);
        \draw[rotate around = {10:(5,5)}] (0,0) -- (9,0);
        \node[] at (1.25,3.5) {\large $D_3$};
        \draw[rotate around = {10:(5,5)}] (0.5,0.5) rectangle (1,1)
            node[pos=0.5] {$Q^3_1$};
        \draw[rotate around = {10:(5,5)}] (0.5,0.5) rectangle (1.5,1.5);
        \draw[rotate around = {10:(5,5)}] (0.5,0.5) rectangle (2.5,2.5);
        \node[] at (1.95,.6) {$Q^3_2$};
        \node[] at (2.55,1.35) {$Q^3_3$};
        \draw[rotate around = {10:(5,5)}] (1.75,0.5) rectangle (2.25,1);
        \draw[rotate around = {10:(5,5)}] (3,0.5) rectangle (4,1.5);
        \draw[rotate around = {10:(5,5)}] (5.5,0.5) rectangle (7.5,2.5);
        \node[] at (2.76,.3) {$P^3_1$};
        \node[] at (4.25,0.85) {$P^3_2$};
        \node[] at (7.1,1.85) {$P^3_3$};
    \end{tikzpicture}
\end{center}
\vspace{2mm}
We first show that the $P_j^k$ are pairwise disjoint. Choose $j_1$ and $j_2$ such that $1 \leq j_2 < j_1 \leq k$, and fix points $a \in P_{j_1}^k$ and $b \in P_{j_2}^k$.  Since  $u_1, \dots , u_n$ form an orthonormal basis, there exists scalar $v_1, \dots , v_n$ and $w_1, \dots , w_n$ such that 
$$a = lc \, (P_{j_1}^k) + \sum_i v_i u_i \quad \text{ and } \quad b = lc \, (P_{j_2}^k) + \sum_i w_i u_i.$$
Because $a \in P_{j_1}^k$, $0 \leq v_i \leq 2r_{j_1}^k$ for all $i$. Likewise, $0 \leq w_i \leq 2r_{j_2}^k$. Since $j_1 > j_2$ and $r_j^k = 2^j r_k$ for each $1 \leq j \leq k$, we have $r_{j_1}^k \geq 2r_{j_2}^k$.  Since $t > 4$, 
\begin{multline*}
    |a - b| = |(lc \, (Q_1^k) + t r_{j_1}^k \sqrt{n} u_1 + \sum_{i=1}^n v_i u_i) - (lc \, (Q_1^k) + t r_{j_2}^k \sqrt{n} + \sum_{i=1}^n w_i u_i)| \\
    = |t(r_{j_1}^k - r_{j_2}^k) \sqrt{n} u_1 + \sum_{i = 1}^n (v_i - w_i) u_i| \geq t(r_{j_1}^k - r_{j_2}^k)\sqrt{n} - |v_1 - w_1| \geq \frac{t-4}{2} r_{j_1}^k > 0.
\end{multline*}
Thus, $P_{j_1}^k$ and $P_{j_2}^k$ share no points and so are disjoint. 

We now show that each $P_j^k$ is contained in $D_k$. Fix a point $q \in P_j^k$ for some $j$, $1 \leq j \leq k$. Then there exists scalars $g_1, \dots , g_n$, $0 \leq g_i \leq 2r_j^k$, such that 
$$q = lc \, (P_j^k) + \sum_i g_i u_i = lc \, (Q_1^k) + t r_j^k \sqrt{n} u_1 + \sum_i g_i u_i.$$
But $g_i \leq 2r_j^k \leq 2^k r_k < R_k$ and 
$$t r_j^k \sqrt{n} + g_1 \leq 2^{k-1} t r_k \sqrt{n} + 2^k r_k = 2^{k-1} (t\sqrt{n}+2) r_k = R_k.$$
Since $lc \, (Q_1^k) + \sum_i a_i u_i \in D_k$ whenever $a_i \leq R_k$, we have that $q \in D_k$. From this it follows that $P_j^k \subset D_k$. \\
\indent Finally, we have that
\begin{gather} \label{Script E_k has the cube property}
    |Q^k_j \cap E_k| > \frac{1}{2} |Q^k_j| \;\;\;\; \text{ and } \;\;\;\; |P_j^k \cap E_k| > \frac{1}{2} |P_j^k|.
\end{gather}
This follows from \eqref{percent Q_k in E_k}:
\begin{multline*}
    |Q^k_j \setminus E_k| \leq |D_k \setminus E_k| 
    < 2^{-nk}(t\sqrt{n}+2)^{-n} |D_k| 
    =  2^{-nk} (t\sqrt{n}+2)^{-n} (2 R_k)^n \\
    = 2^{-nk} (t\sqrt{n}+2)^{-n} (2^{k-1} (t\sqrt{n}+2))^n (2 r_k)^n 
    = 2^{-n} (2 r_k)^n 
    \leq \frac{1}{2} |Q^k_j|.
\end{multline*}
The calculations for each $P_j^k$ are the same, using that $|Q_j^k| = |P_j^k|$. 

\medskip

Now define, for each $k \in \N$, 
\begin{gather*}
    f_k = \frac{\chi_{Q_1^k \cap E_k}}{\lVert \chi_{Q_1^k \cap E_k} \rVert_{\pp}}.
\end{gather*}
Then $\lVert f_k \rVert_{\pp} = 1$, and by Lemma \ref{Ptwise Average Estimate Lemma}, for any $y \in P_j^k \cap E_k$,
\begin{align} \label{ptwise operator estimate in M_alpha}
    M_\alpha f_k(y) &\geq \left(\frac{(t+2)\sqrt{n}}{2}\right)^{\alpha - n} |Q^k_j|^{\frac{\alpha}{n}} \dashint_{Q^k_j} f_k(x) \, dx  \nonumber \\
    &= 2^{-\alpha}\left((t+2)\sqrt{n}\right)^{\alpha - n} |Q^k_j|^{\frac{\alpha}{n}} (r^k_j)^{-n} |Q^k_1 \cap E_k| \lVert \chi_{Q^k_1 \cap E_k} \rVert_{\pp}^{-1} \nonumber \\
    &> 2^{-\alpha-n(j-1) - 1}\left((t+2)\sqrt{n}\right)^{\alpha - n}|Q^k_j|^{\frac{\alpha}{n}} r_k^{-n} |Q_1^k| \, \lVert \chi_{Q_1^k \cap E_k} \rVert_{\pp}^{-1} \nonumber \\
    &\geq 2^{-\alpha-n(j-2) - 1} \left((t+2)\sqrt{n}\right)^{\alpha - n} |P^k_j|^{\frac{\alpha}{n}} \lVert \chi_{Q_1^k \cap E_k} \rVert_{\pp}^{-1}.
\end{align}

In the following calculation we use strongly that $Q_1^k \cap E_k$ has minimal harmonic mean. Let $\mathcal{E}$ be the collection of all $Q^k_j \cap E_k$ and $P^k_j \cap E_k$, $k \in \N$ and $1 \leq j \leq k$. By \eqref{Script E_k has the cube property}, $\mathcal{E}$ has the cube property. Moreover, every set in $\mathcal{E}$ has measure less than $1$ since $|D_k| \leq 1$. Thus, by Corollary~\ref{fractional strong type gives norm equivalence corollary} (see \eqref{norm and harmonic mean equivalence eqn} for the constants), the boundedness of $M_\alpha$ implies that for any $P_j^k$,
\begin{align} \label{Q_1^k to P_j^k norm fractional}
    \lVert \chi_{Q_1^k \cap E_k} \rVert_{\pp} &\leq \frac{2K_{\pp}^2 \Kconst{\pp}{\E}}{k_{\pp}} |Q_1^k \cap E_k|^{1/p_{Q_1^k \cap E_k}}  \nonumber \\
    &\leq \frac{2K_{\pp}^2 \Kconst{\pp}{\E}}{k_{\pp}} |Q_1^k|^{1/p_{Q_1^k \cap E_k}}  \nonumber \\
    &\leq  \frac{2K_{\pp}^2 \Kconst{\pp}{\E}}{k_{\pp}} |Q_1^k|^{1/p_{P_j^k \cap E_k}}, \nonumber \\
    \intertext{where in the last inequality we use that $|Q_1^k| \leq 1$ and $p_{Q_1^k \cap E_k} \leq p_{P_j^k \cap E_k}$. Furthermore, since $1/p_{P_j^k \cap E_k} = 1/q_{P_j^k \cap E_k} + \alpha/n$, }
    &= \frac{2 K_{\pp}^2 \Kconst{\pp}{\E}}{k_{\pp}} (2^{-n(j-1)} |P^k_j|)^{1/p_{P_j^k \cap E_k}} \nonumber \\
    &\leq \frac{2^{1-n(j-1) \beta_k^{-1}}K_{\pp}^2 \Kconst{\pp}{\E}}{k_{\pp}} |P^k_j|^{\frac{\alpha}{n}} |P^k_j|^{1/q_{P^k_j \cap E_k}}. \nonumber \\
    \intertext{Finally, by \eqref{Script E_k has the cube property} and another application of Corollary \ref{fractional strong type gives norm equivalence corollary},}
    &\leq \frac{2^{2-n(j-1) \beta_k^{-1}}K_{\pp}^2 \Kconst{\pp}{\E}}{k_{\pp}} |P^k_j|^{\frac{\alpha}{n}} |P^k_j \cap E_k|^{1/q_{P^k_j \cap E_k}} \nonumber \\
    &\leq \frac{2^{3-n(j-1) \beta_k^{-1}} K_{\pp}^3 \Kconst{\pp}{\E}}{k_{\pp}} |P^k_j|^{\frac{\alpha}{n}} \lVert \chi_{P_j^k \cap E_k} \rVert_{q(\cdot)}.
\end{align}
Therefore, by \eqref{ptwise operator estimate in M_alpha} and \eqref{Q_1^k to P_j^k norm fractional}, for each $y \in P_j^k \cap E_k$ we have that
\begin{gather} \label{ptwise estimate of M_alpha}
    M_\alpha f_k(y) \geq \frac{2^{-n(j-1)(1-\beta_k^{-1})-\alpha-n-4} k_{\pp}}{\left((t+2)\sqrt{n}\right)^{n-\alpha}K_{\pp}^3 \Kconst{\pp}{\E}} \lVert \chi_{P_j^k \cap E_k} \rVert_{q(\cdot)}^{-1}.
\end{gather}
Recall that $p(x) \leq \beta_k$ on $E_k$, with $\beta_k$ being defined by $\eqref{beta_k}$. So, $q(x) \leq \frac{n\beta_k}{n-\alpha \beta_k}$ on each $E_k$, and in particular, 
\begin{gather} \label{q_+ calculation}
    q_+(D_k \cap E_k) = \frac{np_+(D_k \cap E_k)}{n - \alpha p_+(D_k \cap E_k)} \leq \frac{n\beta_1}{n-\alpha \beta_1} = \frac{n+1}{n-\alpha} < \infty.
\end{gather}  
Thus, since $P_j^k \cap E_k \subset \{x \in \R^n : q(x) < \infty\}$ and the $P_j^k$ are disjoint with respect to $j$, \eqref{ptwise estimate of M_alpha} and \eqref{q_+ calculation} together imply that for any constant $C>0$,
\begin{align*} 
    \rho_{q(\cdot)} \left(\frac{M_\alpha f_k}{C} \right) &\geq \int_{\{x \in \R^n : q(x) < \infty\}} \left(\frac{|M_\alpha f_k(x)|} {C} \right)^{q(x)} \, dx \nonumber \\
    &\geq \sum_{j=1}^{k} \int_{P_j^k \cap E_k} \left[ 
    \frac{2^{-n(j-1)(1-\beta_k^{-1})-\alpha-n-4} k_{\pp}}{C \left((t+2)\sqrt{n}\right)^{n-\alpha}K_{\pp}^3 \Kconst{\pp}{\E}} \lVert \chi_{P_j^k \cap E_k} \rVert_{q(\cdot)}^{-1} \right]^{q(x)} \, dx \nonumber \\
    &\geq \left(\frac{2^{-\alpha-n-4} \left((t+2)\sqrt{n}\right)^{\alpha-n} k_{\pp}}{C K_{\pp}^3 \Kconst{\pp}{\E}} \right)^{\left(\frac{n+1}{n-\alpha}\right)} \nonumber \\
    &\hspace{10mm} \times \sum_{j=1}^k 2^{-n(j-1)(1 - \beta_k^{-1})\left(\frac{n\beta_k}{n-\alpha \beta_k}\right)} \int_{P_j^k \cap E_k} \lVert \chi_{P_j^k \cap E_k} \rVert_{q(\cdot)}^{-q(x)} \, dx, \nonumber
\end{align*}
where in the last inequality we use that if a constant $c$ satisfies $0 < c < 1$, then $c^{q(x)} \geq c^{q_+}$, assuming $q_+ < \infty$. Furthermore, since $q(x) < \infty$ for $x \in P_j^k \cap E_k$, 
$$\int_{P_j^k \cap E_k} \lVert \chi_{P_j^k \cap E_k} \rVert_{\qq}^{-q(x)} \, dx = 1,$$ 
(see \cite[Proposition 2.21]{VLS}). Thus,
\begin{gather} \label{First Step Modular Estimate of M_alpha f_k}
    \rho_{\qq} \left(\frac{M_\alpha f_k}{C}\right) \geq \left(\frac{2^{-\alpha-n-4} \left((t+2)\sqrt{n}\right)^{\alpha-n} k_{\pp}}{C K_{\pp}^3 \Kconst{\pp}{\E}} \right)^{\left(\frac{n+1}{n-\alpha}\right)} \sum_{j=1}^k 2^{-nk(1-\beta_k^{-1})\left(\frac{n\beta_k}{n-\alpha \beta_k}\right)}. 
\end{gather}
By our choice of $\beta_k$,
\begin{gather} \label{exponent control in modular calculation}
    -nk(1-\beta_k^{-1})\left(\frac{n\beta_k}{n-\alpha \beta_k}\right) = -nk\left(\frac{n-\alpha}{n^2k + n}\right) \left(\frac{nk+1}{(n-\alpha)k} \right) = -\frac{n^2k^2 + nk}{n^2k^2+n k} = -1.
\end{gather}
If we combine \eqref{First Step Modular Estimate of M_alpha f_k} and \eqref{exponent control in modular calculation}, we obtain
\begin{multline} \label{Modular Estimate of M_alpha f_k}
    \rho_{\qq} \left(\frac{M_\alpha f_k}{C}\right) 
    \geq \left(\frac{2^{-\alpha-n-4} \left((t+2)\sqrt{n}\right)^{\alpha-n} k_{\pp}}{C K_{\pp}^3 \Kconst{\pp}{\E}} \right)^{\left(\frac{n+1}{n-\alpha}\right)} \sum_{j=1}^k 2^{-1}  \\
    \geq k \left(\frac{2^{-\alpha-n-5} \left((t+2)\sqrt{n}\right)^{\alpha-n} k_{\pp}}{C K_{\pp}^3 \Kconst{\pp}{\E}} \right)^{\left(\frac{n+1}{n-\alpha}\right)}.
\end{multline}

Now let $C=\|M_\alpha\|_{L^\pp \rightarrow L^\qq}$.  By the definition of the norm, since $\lVert \frac{M_\alpha f_k}{C} \rVert_{\qq} \leq \lVert f_k \rVert_{\pp} = 1$, we have that $\rho_{\qq} (\frac{M_\alpha f_k}{C}) \leq 1$ for all $k \in \mathbb{N}$. But for
\[ k > \left(\frac{2^{-\alpha-n-5} \left((t+2)\sqrt{n}\right)^{\alpha-n} k_{\pp}}{C K_{\pp}^3 \Kconst{\pp}{\E}}\right)^{\left(\frac{n+1}{\alpha-n}\right)}, \]
\eqref{Modular Estimate of M_alpha f_k} implies $\rho_{\qq} (\frac{M_\alpha f_k}{C}) > 1$. This is a contradiction, and hence, if $M_\alpha : L^{\pp} \to L^{\qq}$, then $p_- > 1$.
\end{proof}
\section{Fractional Singular Integral Operators} \label{Fractional Singular Integral Operators}
In this section we prove Theorems~\ref{SIO K_0^alpha Theorem} and~\ref{SIO Bounded Exponent Theorem}.  To do so, we will show that given a fractional CZO $T_\alpha$ that is non-degenerate in the direction $u_1$, there exists $t$ sufficiently large such that $T_\alpha$ behaves pointwise like the fractional maximal operator on $(t,u_1)$ pairs. With this we can apply many of the same arguments in Section \ref{Fractional Maximal Operators}.  As we noted above, this is the reason that we wrote the proofs for the fractional maximal operator as we did.

The following lemma was proved in \cite[Theorem 2.1]{JOLinear} in the case $\alpha = 0$. The proof for $0 \leq \alpha < n$ is very similar, and we provide it for completeness.
\begin{lemma} \label{Ptwise Fractional SIO Average Estimate Lemma}
    Given $0 \leq \alpha < n$, let $T_\alpha$ be a fractional Calder\'on-Zygmund operator with a non-degenerate kernel in the direction $u_1$. Let $Q$ and $P$ be cubes of radius $r > 0$ that form a $(t,u_1)$ pair for some $t \in \R$. Then there exists $t_0 \geq 4$ such that if $|t| \geq t_0$, $y \in P$, and $f$ is a non-negative function with $\supp(f) \subset Q$, we have that 
    \begin{gather}
        |T_\alpha f(y)| \geq \frac{2^{n-\alpha-1} a}{(t\sqrt{n})^{n-\alpha}} |Q|^{\frac{\alpha}{n}} \dashint_{Q} f(x) \, dx,
    \end{gather}
    where $a$ is the constant in the non-degeneracy condition \eqref{non-degenerate condition}.
\end{lemma}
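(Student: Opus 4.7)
The plan is to fix $y\in P$ and compare the kernel $K_\alpha(y,\cdot)$, as $x$ ranges over $Q$, to its value at a single reference point aligned with $y$ in the direction $u_1$, so that the non-degeneracy condition applies and the smoothness condition controls the error.  Write $Q=Q(x_0,r)$ and $P=Q(y_0,r)$, so that $y_0-x_0=tr\sqrt{n}\,u_1$, and for each $y\in P$ set
\[ z = y - tr\sqrt{n}\,u_1 = x_0 + (y-y_0). \]
Then $y-z=tr\sqrt{n}\,u_1$ is parallel to $u_1$, so the non-degeneracy condition \eqref{non-degenerate condition} gives
\[ |K_\alpha(y,z)| \geq \frac{a}{(|t|r\sqrt{n})^{n-\alpha}}, \]
while $|z-x_0|=|y-y_0|\leq r\sqrt{n}$ yields, for every $x\in Q$,
\[ |x-z| \leq |x-x_0|+|x_0-z| \leq 2r\sqrt{n}, \qquad |y-z| = |t|r\sqrt{n}. \]

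Provided $|t|>4$, we have $|x-z|<\tfrac12|y-z|$, so the smoothness estimate \eqref{kernel shift bound} is applicable with the perturbation placed in the second argument (formally, $h=x-z$), giving
\[ |K_\alpha(y,x)-K_\alpha(y,z)| \leq \frac{C_0(2r\sqrt{n})^\delta}{(|t|r\sqrt{n})^{n-\alpha+\delta}} = \frac{2^\delta C_0}{|t|^\delta(|t|r\sqrt{n})^{n-\alpha}}. \]
Since $\supp(f)\subset Q$ and $f\geq 0$, I would then split
\[ T_\alpha f(y) = K_\alpha(y,z)\int_Q f(x)\,dx + \int_Q [K_\alpha(y,x)-K_\alpha(y,z)]f(x)\,dx \]
and apply the reverse triangle inequality to obtain
\[ |T_\alpha f(y)| \geq \frac{1}{(|t|r\sqrt{n})^{n-\alpha}}\left(a-\frac{2^\delta C_0}{|t|^\delta}\right)\int_Q f(x)\,dx. \]

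The choice $t_0:=\max\bigl(5,\,(2^{\delta+1}C_0/a)^{1/\delta}\bigr)$ forces $a-2^\delta C_0/|t|^\delta\geq a/2$ for $|t|\geq t_0$, leaving
\[ |T_\alpha f(y)| \geq \frac{a}{2(|t|r\sqrt{n})^{n-\alpha}}\int_Q f(x)\,dx. \]
Converting $r^{\alpha-n}\int_Q f$ using $|Q|=(2r)^n$ produces $2^{n-\alpha}|Q|^{\alpha/n}\dashint_Q f$, which matches the claimed constant $2^{n-\alpha-1}a/(|t|\sqrt{n})^{n-\alpha}$.  The only genuine obstacle is the calibration of $t_0$: it must simultaneously make \eqref{kernel shift bound} applicable (via $|x-z|<\tfrac12|y-z|$) and drive the smoothness correction below the non-degenerate main term.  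Both requirements strengthen as $|t|$ grows, so the quantitative threshold above suffices; everything else is tracking the factors of $r$, $\sqrt n$, and $2$.
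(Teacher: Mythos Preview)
Your proof is correct and follows the same overall strategy as the paper: compare the kernel to its value at a reference point where the non-degeneracy condition applies, and control the discrepancy via the smoothness estimate~\eqref{kernel shift bound}.

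The execution differs in one nice way. The paper fixes the centers $(x_0,y_0)$ as the reference pair and must therefore perturb \emph{both} arguments, writing
\[
|K_\alpha(x,y)-K_\alpha(x_0,y_0)|\le |K_\alpha(x_0+h,y_0+k)-K_\alpha(x_0+h,y_0)|+|K_\alpha(x_0+h,y_0)-K_\alpha(x_0,y_0)|,
\]
which costs two applications of~\eqref{kernel shift bound} and produces the constant $2C_0(1+2^{n-\alpha+\delta})t_0^{-\delta}$. The paper then also uses the resulting estimate to argue that $K_\alpha$ has constant sign on $Q\times P$ before pulling the absolute value inside the integral. Your choice of a $y$-dependent reference point $z=y-tr\sqrt{n}\,u_1$ keeps the first argument fixed, so a single application of~\eqref{kernel shift bound} suffices and the reverse triangle inequality on the integral replaces the constant-sign step entirely. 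This yields a slightly cleaner threshold $t_0$ and a marginally shorter argument; the paper's version, on the other hand, gives the extra qualitative information that the kernel does not change sign on $Q\times P$, though that fact is not used elsewhere.
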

\begin{proof}
    Choose a constant $t_0 \geq 4$ such that $2C_0 (1+2^{n-\alpha+\delta})t_0^{-\delta} \leq a$ and let $|t| \geq t_0$. Here, $a$ is the constant in \eqref{non-degenerate condition}, and $\delta, C_0$ are the constants in~\eqref{kernel shift bound}. Let $Q = Q(x_0,r)$ and $P = Q(y_0,r)$ form a $(t,u_1)$ pair (Definition \ref{QP Pair Definition}). Given any point $x \in Q$ we can write $x = x_0 + h$, where $|h| < r \sqrt{n}$. Similarly, given $y \in P$, $y = y_0 + k$ where $|k| < r \sqrt{n}$. We claim that for such $x$ and $y$,
    \begin{gather} \label{Kernel Estimate}
        |K_\alpha(x,y) - K_\alpha(x_0,y_0)| \leq \frac{1}{2} |K_\alpha(x_0,y_0)|.
    \end{gather}
    To prove this we will apply \eqref{kernel shift bound}, which is possible as $|h| < r \sqrt{n} \leq \frac{1}{2} |x_0 - y_0|$ and
    \begin{gather*}
        |x_0 + h - y_0| \geq |x_0 - y_0| - |h| \geq |t|r \sqrt{n} - r \sqrt{n} \geq \frac{|t|}{2} r \sqrt{n} \geq 2|k|.
    \end{gather*}
    Thus, we can estimate as follows:
    \begin{align*}
        &  |K_\alpha(x,y) - K_\alpha(x_0,y_0)| \\
        & \qquad \leq |K_\alpha(x_0 + h, y_0 + k) - K_\alpha(x_0 + h,y_0)| 
        + |K_\alpha(x_0 + h,y_0) - K_\alpha(x_0,y_0)| \\
        & \qquad \leq \frac{C_0 |k|^{\delta}}{|x_0 + h - y_0|^{n-\alpha+\delta}} + \frac{C_0 |h|^{\delta}}{|x_0-y_0|^{n-\alpha+\delta}} \\
        & \qquad = I_1 + I_2.
    \end{align*}
    We can bound $I_2$ immediately:
    \begin{gather*}
        I_2 \leq \frac{C_0(r \sqrt{n})^{\delta}}{(|t|r \sqrt{n})^{\delta} |x_0 - y_0|^{n-\alpha}} = C_0 \frac{|t|^{-\delta}}{|x_0 - y_0|^{n-\alpha}}.
    \end{gather*}
    To estimate $I_1$, note that 
    \begin{gather*}
        |x_0 + h - y_0| \geq \frac{|t|}{2} r \sqrt{n} = \frac{1}{2} |x_0 - y_0|.
    \end{gather*}
    Hence,
    \begin{gather*}
        I_1 \leq \frac{C_0 2^{n-\alpha+\delta} (r \sqrt{n})^{\delta}}{(|t|r \sqrt{n})^{\delta} |x_0 - y_0|^{n-\alpha}} = C_0 \frac{2^{n-\alpha+\delta} |t|^{-\delta}}{|x_0 - y_0|^{n-\alpha}}.
    \end{gather*}
    If we combine these estimates, by our choice of $t$ and \eqref{non-degenerate condition} we have
    \begin{gather*}
        I_1 + I_2 \leq \frac{a}{2} \frac{1}{|x_0 - y_0|^{n-\alpha}} \leq \frac{1}{2} |K_\alpha(x_0,y_0)|,
    \end{gather*}
    which proves \eqref{Kernel Estimate}. \\
    \indent It now follows that for any $x \in Q$, $y \in P$, the kernel $K_\alpha(x,y)$ always has the same sign. Therefore, if we fix a non-negative function $f$ with $\supp(f) \subset Q$, then
    \begin{align*}
        |T_\alpha f(y)| &= \left|\int_Q K_\alpha(x,y)f(x) \, dx\right| \\
        &= \int_Q |K_\alpha(x,y)|f(x) \, dx \\
        &\geq \int_Q |K_\alpha(x_0,y_0)| f(x) \, dx \\
        &\qquad  - \int_Q |K_\alpha(x,y) - K_\alpha(x_0,y_0)|f(x) \, dx;\\
    \intertext{again by \eqref{Kernel Estimate} and \eqref{non-degenerate condition},}
        &\geq \frac{1}{2} |K_\alpha(x_0,y_0)| \int_Q f(x) \, dx \\
        &\geq \frac{a}{2|x_0 - y_0|^{n-\alpha}} \int_Q f(x) \, dx \\
        &\geq \frac{a}{2(|t|r\sqrt{n})^{n-\alpha}} \int_Q f(x) \, dx \\
        &= \frac{2^{n-\alpha-1} a}{(|t| \sqrt{n})^{n-\alpha}} |Q|^{\frac{\alpha}{n}} \dashint_Q f(x) \, dx.
    \end{align*}
\end{proof}
\begin{proof}[Proof of Theorem \ref{SIO K_0^alpha Theorem}]
We can argue exactly as in the proof of Theorem~\ref{Maximal K_0^alpha theorem}.  To show that $\pp \in K_0^\alpha(\R^n)$ whenever $T_\alpha : L^{\pp}(\R^n) \to L^{\qq,\infty}(\R^n)$, we will prove the fractional averaging operators are uniformly bounded since, by Proposition \ref{K_0^alpha and A_Q^alpha proposition}, this is equivalent to $\pp \in K_0^\alpha(\R^n)$.  Let $|t| \geq t_0$, where $t_0$ is the constant in Lemma \ref{Ptwise Fractional SIO Average Estimate Lemma}. Now let $Q = Q(x_0,r)$ and $P = Q(y_0,r)$ form a $(t,u_1)$ pair. By Lemma \ref{Ptwise Fractional SIO Average Estimate Lemma}, for a non-negative function $f$ with $\supp (f) \subset Q$ and $y \in P$,
\begin{gather*}
    |T_\alpha f(y)| \geq c(a,t,n,\alpha) |Q|^{\frac{\alpha}{n}} \dashint_{Q} f(x) \, dx.
\end{gather*}
It follows that $P \subset \{x \in \R^n : |T_\alpha f(y)| > \lambda\}$ for any $0 < \lambda < c(a,t,n,\alpha)|Q|^{\frac{\alpha}{n}} \dashint_{Q} f\,dx$. Since $T_\alpha: L^{\pp} \to L^{\qq,\infty}$, we have
\begin{gather*}
    \lVert \chi_{P} \rVert_{\qq} \leq \lVert \chi_{\{|T_\alpha f(y)| > \lambda\}} \rVert_{\qq} 
    \leq \frac{\|T_\alpha\|_{\Lp \rightarrow L^{\qq,\infty}}}{\lambda} \lVert f \rVert_{\pp}.
\end{gather*}
If we take the supremum over all such $\lambda$, we obtain
\begin{gather} \label{Supremum on lambda estimate} 
    \left( \dashint_{Q} f\,dx \right) |Q|^{\frac{\alpha}{n}} \lVert \chi_{P} \rVert_{\qq} \leq c(a,t,n,\alpha) \lVert f \rVert_{\pp}.
\end{gather}
By \eqref{Supremum on lambda estimate} and the generalized H\"older's inequality (Corollary \ref{gen Holder}),
\begin{gather} \label{disjoint cube bound}
    \left( \dashint_{Q} f\,dx \right) \lVert \chi_{P} \rVert_{\qq} \lesssim |Q|^{-\frac{\alpha}{n}} \lVert f \rVert_{\pp} \lesssim |Q|^{-\frac{\alpha}{n}} \lVert \chi_{Q} \rVert_{\frac{n}{\alpha}} \lVert f \rVert_{\qq} = \lVert f \rVert_{\qq} .
\end{gather}
Let $f = \chi_{Q}$; then \eqref{disjoint cube bound} yields $\lVert \chi_{P} \rVert_{\qq} \leq c(\pp,a,t,n,\alpha) \lVert \chi_{Q} \rVert_{\qq}$. Similarly, we can repeat the above argument, exchanging the roles of $Q$ and $P$ and defining $f = \chi_{P}$, to obtain the reverse inequality, i.e. $\lVert \chi_{Q} \rVert_{\qq} \leq c(\pp,a,t,n,\alpha) \lVert \chi_{P} \rVert_{\qq}$. With this we may conclude by \eqref{Supremum on lambda estimate} that
\begin{multline*}
    \lVert A_{Q}^\alpha f \rVert_{\qq} = \left( \dashint_{Q} f\,dx \right) |Q|^{\frac{\alpha}{n}} \lVert \chi_{Q} \rVert_{\qq} \\
    \leq c(\pp,a,t,n,\alpha) \left( \dashint_{Q} f\,dx \right)|Q|^{\frac{\alpha}{n}} \lVert \chi_{P} \rVert_{\qq} \leq c(\pp,a,t,n,\alpha) \lVert f \rVert_{\pp}.
\end{multline*}
Since this constant does not depend on the choice of $Q$, we have that the averaging operators are uniformly bounded from $L^{\pp}(\R^n) \to L^{\qq}(\R^n)$, and hence, by Proposition \ref{K_0^alpha and A_Q^alpha proposition}, we are done.
\end{proof}
Our next proposition shows that for a general collection of sets $\E$ with $0 < |E| < \infty$ for all $E \in \E$, if $\E$ satisfies the cube property (Definition \ref{cube property}), then $\pp \in K_0^\alpha(\E)$. The proof is identical to that of Proposition \ref{Generalized K_0^alpha Theorem}.
\begin{prop} \label{Generalized K_0^alpha theorem fractional SIO}
     Given  $0 \leq \alpha < n$ and $\pp \in \mathcal{P}(\R^n)$ such that $p_+<\frac{n}{\alpha}$, define $q(\cdot)$ by~\eqref{q dot defn}. Let $T_\alpha$ be a fractional Calder\'on-Zygmund operator with a non-degenerate kernel in the direction $u_1$.  If $\mathcal{E}$ has the cube property and   $T_\alpha : L^{\pp}(\R^n) \to L^{q(\cdot),\infty}(\R^n)$, then $\pp \in K_0^\alpha(\mathcal{E})$.  
\end{prop}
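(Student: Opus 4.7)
The plan is to follow the template of Proposition~\ref{Generalized K_0^alpha Theorem}, which established the analogous statement for the fractional maximal operator, replacing only the input result. First I would invoke Theorem~\ref{SIO K_0^alpha Theorem} on the hypothesis $T_\alpha : L^{\pp}(\R^n) \to L^{\qq,\infty}(\R^n)$ to conclude that $\pp \in K_0^\alpha(\R^n)$, with finite constant $\Kalconst{\pp}{\R^n}$. At that point $T_\alpha$ has done all of its work and drops out of the argument.

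Next, I would use the cube property to transfer the $K_0^\alpha$ estimate from arbitrary cubes to arbitrary $E \in \E$. Given $E \in \E$, pick a cube $Q$ with $E \subset Q$ and $|E| \geq \tfrac{1}{2} |Q|$. Monotonicity of the $L^{\rr}$ norm applied to $\chi_E \leq \chi_Q$ yields $\|\chi_E\|_{p'(\cdot)} \leq \|\chi_Q\|_{p'(\cdot)}$ and $\|\chi_E\|_{\qq} \leq \|\chi_Q\|_{\qq}$, while $|E| \geq \tfrac{1}{2}|Q|$ together with $\frac{\alpha}{n} - 1 < 0$ gives $|E|^{\frac{\alpha}{n}-1} \leq 2^{1-\frac{\alpha}{n}} |Q|^{\frac{\alpha}{n}-1}$. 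Multiplying these three estimates together and applying the definition of $\Kalconst{\pp}{\R^n}$ yields
\[ |E|^{\frac{\alpha}{n}-1} \|\chi_E\|_{p'(\cdot)} \|\chi_E\|_{\qq} \leq 2^{1-\frac{\alpha}{n}} |Q|^{\frac{\alpha}{n}-1} \|\chi_Q\|_{p'(\cdot)} \|\chi_Q\|_{\qq} \leq 2^{1-\frac{\alpha}{n}} \Kalconst{\pp}{\R^n}. \]
Taking the supremum over $E \in \E$ and noting that the bound on the right is independent of $E$ gives $\pp \in K_0^\alpha(\E)$.

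I do not anticipate any genuine obstacle for this statement: the substantive content in the fractional singular integral setting lives inside Theorem~\ref{SIO K_0^alpha Theorem} (whose proof in turn relies on the pointwise lower bound of Lemma~\ref{Ptwise Fractional SIO Average Estimate Lemma} and the $(t,u_1)$-pair construction). Once that global result is available, passing to $\E$ under the cube property is a purely structural step identical in form to the argument for $M_\alpha$, and the non-degeneracy of $T_\alpha$ plays no further role. The only thing to be careful of is keeping track of the direction of the inequality $|E|^{\frac{\alpha}{n}-1} \leq 2^{1-\frac{\alpha}{n}}|Q|^{\frac{\alpha}{n}-1}$, which flips because the exponent $\frac{\alpha}{n}-1$ is negative.
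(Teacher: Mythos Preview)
Your proposal is correct and matches the paper's approach exactly: the paper states that the proof is identical to that of Proposition~\ref{Generalized K_0^alpha Theorem}, i.e., invoke Theorem~\ref{SIO K_0^alpha Theorem} to obtain $\pp \in K_0^\alpha(\R^n)$ and then use the cube property, monotonicity of the norms, and the negativity of $\frac{\alpha}{n}-1$ to transfer the estimate to each $E \in \E$.
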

\begin{corollary} \label{fractional SIO strong type gives norm equivalence corollary}
   Given  $0 \leq \alpha < n$ and $\pp \in \mathcal{P}(\R^n)$ such that $p_+<\frac{n}{\alpha}$, define $q(\cdot)$ by~\eqref{q dot defn}.  Let $T_\alpha$ be a fractional Calder\'on-Zygmund operator with a non-degenerate kernel in the direction $u_1$.  If  $\mathcal{E}$ have the cube property and  $T_\alpha : L^{\pp}(\R^n) \to L^{q(\cdot)}(\R^n)$,  then for all $E \in \mathcal{E}$,
    \begin{gather*}
        \lVert \chi_E \rVert_{\pp} \approx |E|^{\frac{1}{p_E}} \hspace{4mm} , \hspace{4mm} \lVert \chi_E \rVert_{p'(\cdot)} \approx |E|^{\frac{1}{p'_E}}, \\
        \lVert \chi_E \rVert_{q(\cdot)} \approx |E|^{\frac{1}{q_E}} \hspace{4mm} \text{ and } \hspace{4mm} \lVert \chi_E \rVert_{q'(\cdot)} \approx |E|^{\frac{1}{q'_E}}.
    \end{gather*} 
\end{corollary}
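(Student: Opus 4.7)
The plan is to imitate the proof of Corollary \ref{fractional strong type gives norm equivalence corollary} essentially verbatim, replacing the maximal operator ingredients with their fractional CZO analogues that have just been established. The logical chain is: strong-type implies weak-type, weak-type together with the cube property gives the $K_0^\alpha(\mathcal{E})$ condition, and that condition yields the desired norm/harmonic-mean equivalence.

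More concretely, I would first invoke Chebyshev's inequality in the variable Lebesgue setting to observe that the hypothesis $T_\alpha : L^{\pp}(\R^n) \to L^{\qq}(\R^n)$ implies the weak-type bound $T_\alpha : L^{\pp}(\R^n) \to L^{\qq,\infty}(\R^n)$. Combined with the assumption that $\mathcal{E}$ has the cube property and that $T_\alpha$ is non-degenerate in the direction $u_1$, Proposition \ref{Generalized K_0^alpha theorem fractional SIO} then applies and gives $\pp \in K_0^\alpha(\mathcal{E})$.

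Once membership in $K_0^\alpha(\mathcal{E})$ is established, the four asserted equivalences are immediate consequences of Corollary \ref{K_0^alpha implies norm equivalence} (which in turn uses Lemma \ref{K_0^alpha implies K_0 lemma} to split the $K_0^\alpha$ condition into the $K_0$ conditions for $\pp$ and $\qq$, and then Proposition \ref{norm and harmonic mean proposition} applied to each of $\pp$, $p'(\cdot)$, $\qq$, $q'(\cdot)$). Since each of those four exponents belongs to $K_0(\mathcal{E})$ (the primed versions follow from the symmetry of the definition of $K_0$), the harmonic-mean estimates apply uniformly over $E \in \mathcal{E}$.

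There is no real obstacle here: the content of the corollary is already built into the earlier machinery, and the present statement is just the CZO version of Corollary \ref{fractional strong type gives norm equivalence corollary}. The only minor point to verify is that the constants in Proposition \ref{norm and harmonic mean proposition} depend only on $\pp$ and $\Kconst{\pp}{\mathcal{E}}$, not on the particular $E \in \mathcal{E}$, so that the $\approx$ relations hold with constants uniform in $E$; this is clear from the bounds $\tfrac{1}{2K_{\pp}} |E|^{1/p_E} \leq \|\chi_E\|_{\pp} \leq \tfrac{2K_{\pp}^2 \Kconst{\pp}{\mathcal{E}}}{k_{\pp}} |E|^{1/p_E}$ stated there.
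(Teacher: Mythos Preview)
Your proposal is correct and matches the paper's own proof, which simply says the result ``follows at once from Proposition \ref{Generalized K_0^alpha theorem fractional SIO} and Corollary \ref{K_0^alpha implies norm equivalence}.'' Your additional remarks about Chebyshev's inequality and the uniformity of constants are accurate elaborations of what is left implicit there.
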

\begin{proof}
    This follows at once from Proposition \ref{Generalized K_0^alpha theorem fractional SIO} and Corollary \ref{K_0^alpha implies norm equivalence}.
\end{proof}
\begin{proof}[Proof of Theorem \ref{SIO Bounded Exponent Theorem}]
    By  Lemma \ref{Ptwise Fractional SIO Average Estimate Lemma} and Lemma \ref{Ptwise Average Estimate Lemma}, we have that for a sufficiently large choice of $|t|$, $T_\alpha$ and $M_\alpha$ have essentially the same  pointwise behavior on $(t,u_1)$ pairs. Hence, to prove that we cannot have $p_- = 1$, we can repeat the construction given in Theorem \ref{Maximal Bounded Exponent Theorem}. If we define 
    \[ f_k = \frac{\chi_{Q_1^k \cap E_k}}{\lVert \chi_{Q_1^k \cap E_k} \rVert_{\pp}}, \]
    then we obtain the pointwise estimate 
    \[ |T_\alpha f_k(y)| \gtrsim 2^{-n(j-1)(1-\beta_k^{-1})} \lVert \chi_{P_j^k \cap E_k} \rVert_{\qq}^{-1} \]
    for $y \in P_j^k \cap E_k$ as before. From this we conclude that
    \begin{gather} \label{modular estimate for T_alpha}
        \rho_{\qq}\left(\frac{T_\alpha f_k}{C} \right) \gtrsim k.
    \end{gather}
    Since $\lVert f_k \rVert_{\pp} = 1$, this contradicts the assumption $\lVert T_\alpha f_k \rVert_{\qq} \leq C \lVert f_k \rVert_{\pp}$ for sufficiently large $k$. 

    \medskip
    
    We will now prove that we cannot have  $p_+ = \frac{n}{\alpha}$; we will again use a proof by contradiction, and we will argue by duality. Suppose $p_+ = \frac{n}{\alpha}$; since $T_\alpha$ is bounded from $L^{\pp} \to L^{\qq}$, its dual operator $T_\alpha^*$ is bounded from $(L^{\qq})^* \to (L^{\pp})^*$. Moreover, we have that the kernel of $T_\alpha^*$ satisfies $K_\alpha^*(x,y) = K_\alpha(y,x)$ and so it satisfies \eqref{kernel singularity bound} and \eqref{kernel shift bound} in Definition \ref{CZ operator}. Finally, $K_\alpha^*(x,y)$ is non-degenerate in the same direction $u_1$ as $T_\alpha$ from the symmetry of \eqref{non-degenerate condition}. Therefore, $T_\alpha^*$ is also a nondegenerate CZO.  Hence, we may apply Lemma ~ \ref{Ptwise Average Estimate Lemma} to $T_\alpha^*$ to obtain a pointwise estimate of $T_\alpha^*$ over $(t,u_1)$ pairs. 
    
    Given this estimate, since $1<(p')_-  \leq (p')_+ <\infty$ and  $(q')_- = 1$, we can then repeat our construction above with $T_\alpha^*$ replacing $T_\alpha$,  $q'(\cdot)$ replacing $\pp$, and  $p'(\cdot)$ replacing $\qq$. If we define $f_k = \chi_{Q_1^k \cap E_k} \lVert \chi_{Q_1^k \cap E_k} \rVert_{q'(\cdot)}^{-1}$ as before, then the same argument yields the analog of \eqref{modular estimate for T_alpha} for $T_\alpha^*$ and $p'(\cdot)$:  for $x\in D_k$,
\begin{gather*}
    \rho_{p'(\cdot)}({T_\alpha^* f_k}) \gtrsim k.     
\end{gather*}
Since $\lVert f_k \rVert_{q'(\cdot)} = 1$ and $(p')_+(D_k) = \frac{n(q')_+(D_k)}{n-\alpha (q')_+(D_k)} \leq \frac{n\beta_k}{n-\alpha \beta_k} \leq \frac{2n}{n-\alpha}$ for $k$ sufficiently large, by a straightforward calculation it follows that
\begin{gather} \label{norm estimate for T^*}
    \lVert f_k \rVert_{L^{q'(\cdot)}(D_k)} \lesssim k^{\frac{\alpha - n}{2n}} \, \lVert T_\alpha^* f_k \rVert_{L^{p'(\cdot)}(D_k)}.
\end{gather}
By Theorem \ref{associate norm}, there exists a non-negative function $h \in L^{\pp}(D_k), \lVert h \rVert_{L^{\pp}(D_k)} \leq 1$, such that
\begin{gather} \label{norm of T^* attained via h}
    \lVert T_\alpha^* f_k \rVert_{L^{p'(\cdot)}(D_k)} \leq k_{\pp}^{-1} \int_{D_k} T_\alpha^* f_k(x) h(x) \, dx. 
\end{gather} 
By Fatou's lemma we may assume $h$ is bounded, since
\[ \int_{D_k} T_\alpha^* f_k(x) h(x) \, dx \leq \liminf_{N \to \infty} \int_{D_k} T_\alpha^* f_k(x) \min (N,h(x)) \, dx.\]
Let $1 < p \leq q < \infty$ be the fixed exponents in Definition \ref{CZ operator}. Then as $h$ is a bounded function with $\supp{(h)} \subset D_k$, $h \in L^{p}(\R^n)$. Likewise, $f_k \in L^{q'}(\R^n)$. Hence by a duality argument on the classical Lebesgue spaces,
\begin{gather} \label{Exchanging T^* f_k for T h}
    \int_{D_k} T_\alpha^* f_k(x) h(x) \, dx = \int_{D_k} f_k(x)\, T_\alpha h(x) \, dx.
\end{gather}  
By H\"older's inequality in the variable Lebesgue spaces, we have
\begin{gather} \label{Splitting up f_k Th integral}
    \int_{D_k} f_k(x) T_\alpha h(x) \, dx \leq K_{\qq} \lVert f_k \rVert_{L^{q'(\cdot)}(D_k)} \, \lVert T_\alpha h \rVert_{L^{\qq}(D_k)}.
\end{gather}
Thus, if we combine inequalities \eqref{norm estimate for T^*}--\eqref{Splitting up f_k Th integral}, and use the assumption  that $T_\alpha : L^{\pp} \to L^{\qq}$, we get
\begin{multline*}
    \lVert f_k \rVert_{L^{q'(\cdot)}(D_k)} \lesssim k^{\frac{\alpha-n}{2n}} \lVert f_k \rVert_{L^{q'(\cdot)}(D_k)} \lVert T_\alpha h \rVert_{L^{\qq}(D_k)} \\ 
    \lesssim k^{\frac{\alpha-n}{2n}} \lVert f_k \rVert_{L^{q'(\cdot)}(D_k)} \, \lVert h \rVert_{L^{\pp}(D_k)} \leq k^{\frac{\alpha-n}{2n}} \lVert f_k \rVert_{L^{q'(\cdot)}(D_k)}.
\end{multline*}
Since $\lVert f_k \rVert_{L^{q'(\cdot)}(D_k)} > 0$, this is a contradiction for $k$ sufficiently large. Therefore, we must have that $p_+ < \tfrac{n}{\alpha}$
and our proof is complete.
\end{proof}
\section{More on the $K_0^\alpha$ Condition} \label{Necessary but Not Sufficient}

In this section we construct four examples related to the  $K_0^\alpha$ condition. We first state each example and make some remarks about them, and then give their constructions.  

Our first example shows that the $K_0^\alpha$ condition, while necessary, is not sufficient for the fractional maximal operator to satisfy a strong $(\pp,\qq)$ inequality.  This complements and extends~\cite[Example~4.51]{VLS} which showed that the $K_0$ condition is not sufficient for the Hardy-Littlewood maximal operator to be bounded on $\Lp$.  

\begin{example} \label{example:K0alpha-not-suff}
 Let $n=1$ and fix $0<\alpha<\frac{1}{2}$.  Then there exists an exponent function $\pp \in K_0^\alpha(\R)$, $1 < p_- \leq p_+ < \tfrac{1}{\alpha}$, such that $M_\alpha : L^{\pp} \not \to L^{\qq}$.
 \end{example}

 \begin{remark}
    The restriction that  $n = 1$ is to simplify the construction.  However, the restriction  $0 < \alpha < \tfrac12$ is  necessary for our particular construction since it relies on the fact that when $0 < \alpha < \tfrac12$ we have $\frac{1}{1-\alpha} \leq \frac{1}{\alpha}$. We strongly believe that a similar example should exist in the range $\tfrac12 \leq \alpha < 1$, but have not been able to find one.
\end{remark}

\begin{remark}
An immediate consequence of Example~\ref{example:K0alpha-not-suff} is that  the $K_0^\alpha(\R)$  is not sufficient for the Riesz potential $I_\alpha$ to satisfy a strong $(\pp,\qq)$ inequality.  This follows because we have that $M_\alpha f(x) \leq cI_\alpha f(x)$ (see~\cite{MR3642364}).   It would be interesting to see if the construction of this example could be modified to prove that the $K_0^\alpha$ condition is not sufficient for a nondegenerate fractional CZO to satisfy a strong $(\pp,\qq)$ inequality, perhaps by using Lemma~\ref{Ptwise Fractional SIO Average Estimate Lemma}.
\end{remark}

\begin{remark}
We note in passing that the $K_0^\alpha$ condition is strictly weaker than log-H\"older continuity.  From Remark~\ref{remark:extrapol} we know that there exists a discontinuous exponent $\pp$ such that $M_\alpha : \Lp \rightarrow L^\qq$; but then, by Theorem~\ref{Maximal K_0^alpha theorem},  this exponent satisfies $\pp \in K_0^\alpha$.  
\end{remark}

\medskip

Our second example is related to Proposition~\ref{norm and harmonic mean proposition}.  For cubes, it is well known (see~\cite[Theorem~4.5.7]{Diening}) that we have the following equivalence:  $\pp \in K_0(\rn)$ if and only if $\|\chi_Q\|_\pp \approx |Q|^{\frac{1}{p_Q}}$ and $\|\chi_Q\|_\cpp \approx |Q|^{\frac{1}{p_Q'}}$.  This leaves open the question as to whether either of these latter two conditions is itself sufficient to imply the $K_0$ condition.  The answer is no.

\begin{example} \label{example:K0stronger}
    There exists $\pp \in \mathcal{P}(\R)$, $1 < p_- \leq p_+ < \infty$, such that $\|\chi_I\|_{\pp} \simeq |I|^{1/p_I}$ for all intervals $I \subset \R$ but $\pp \not \in K_0(\R)$.
\end{example}    

\begin{remark}
    It is immediate that by exchanging the roles of $\pp$ and $p'(\cdot)$ we also obtain an example such that $\| \chi_Q \|_{p'(\cdot)} \simeq |Q|^{1/p'_Q}$ holds for all intervals but $\pp \not \in K_0(\R^n)$.
\end{remark}

\medskip

Our final two examples are concerned with the relationship between the $K_0$ and $K_0^\alpha$ conditions.  By Lemma~\ref{K_0^alpha implies K_0 lemma}, for $0\leq \alpha < n$, we have that $\pp\in K_0^\alpha(\rn)$ if and only if $\pp,\, \qq \in K_0(\rn)$.   However, the assumption that one of either $\pp$ or $\qq$ satisfies the $K_0$ condition is not sufficient to imply that $\pp$ satisfies the $K_0^\alpha$ condition.  

\begin{example} \label{example:ppK0-not K0alpha}
    Fix $0 < \alpha < 1$. There exists $\pp \in K_0(\R)$, $1 < p_- \leq p_+ < \frac{1}{\alpha}$, such that $\pp \not \in K_0^\alpha(\R)$.
    \end{example}

    \begin{example} \label{example:qqK0-notK0alpha}
    Fix $0 < \alpha < 1$. There exists $\pp \in \mathcal{P}(\R)$, $1 < p_- \leq p_+ < \frac1\alpha$, such that $\qq \in K_0(\R)$ but $\pp \not \in K_0^\alpha(\R)$.
    \end{example}
    
   \medskip

   We now give the constructions of each example.
   
    \begin{proof}[Construction of Example~\ref{example:K0alpha-not-suff}]
        Our example is adapted from~\cite[Example 4.51]{VLS}, which shows that the $K_0(\R^n)$ condition is not sufficient for the Hardy-Littlewood maximal operator to be bounded. Let $n = 1$ and fix $0 < \alpha < \tfrac12$. Let $\varphi \in C_c^\infty(\R)$ be such that $\supp(\varphi) \subset [-1/2,1/2]$, $0 \leq \varphi(x) \leq \frac{5 - 8\alpha + 5\alpha^2}{6\alpha(1-\alpha)(2-\alpha)}$, and $\varphi(x) = \frac{5-8\alpha+5\alpha^2}{6\alpha(1-\alpha)(2-\alpha)}$ if $x \in [-1/4,1/4]$. Define the exponent function $\pp \in \mathcal{P}(\R)$ by
    \[ p(x) = \frac{1+\alpha}{2\alpha(2-\alpha)} + \sum_{k=1}^\infty \varphi(x - e^k). \]
    Since $0 < \alpha < \tfrac12$, $1 < \frac{1+\alpha}{2\alpha(2-\alpha)} \leq p(x) \leq \frac{2-\alpha}{3\alpha(1-\alpha)} < \frac{1}{\alpha}$.  Further, $\pp \in LH_0(\R)$ (see Definition~\ref{defn-LH}). Moreover, since $p_+ < \infty$, \cite[Proposition 2.3]{VLS} gives $1/\pp \in LH_0(\R)$. Since $\frac{1}{q(x)} = \frac{1}{p(x)} - \alpha$,
    $1/\qq \in LH_0(\R)$ as well. Again by \cite[Proposition 2.3]{VLS} and the fact that $(1/\qq)_+ < \infty$, $\qq \in LH_0(\R)$. 
    
    We will first show that $\pp \in K_0^\alpha(\R)$. Fix an interval $Q$. Suppose first that  $|Q| \leq 1$. Then by \cite[Corollary~2.23]{VLS}, 
    \begin{equation} \label{eqn:smallcube1} 
    \lVert \chi_Q \rVert_{p'(\cdot)} \lVert \chi_Q \rVert_{\qq} 
    \leq |Q|^{\frac{1}{(p')_+(Q)} + \frac{1}{q_+(Q)}} 
    = |Q|^{1 - \frac{1}{p_-(Q)} + \frac{1}{q_+(Q)}}. 
    \end{equation}
    Since $1/\qq \in LH_0(\R)$ and the fact that $(1/\qq)_+ < \infty$,  by \cite[Lemma 3.24]{VLS} there exists $C > 0$ such that for all $x \in Q$,
    \begin{multline} \label{eqn:smallcube2}
         |Q|^{1 - \frac{1}{p_-(Q)} + \frac{1}{q_+(Q)}} \\
        = |Q|^{1 - \alpha - \frac{1}{q_-(Q)} + \frac{1}{q_+(Q)}} 
        |Q|^{1-\alpha + \left(\left(\frac{1}{\qq}\right)_-(Q) - \frac{1}{q(x)}\right) + \left(\frac{1}{q(x)} - \left(\frac{1}{\qq}\right)_+(Q)\right)} \leq C|Q|^{1 - \alpha}.
    \end{multline}
    If we combine these inequalities we get that the $K_0^\alpha$ condition holds.
    
    Now suppose $|Q| > 1$. Then there exists $j \in \N$ such that $e^{j-1} < |Q| \leq e^j$. For each $k \in \N$, define the intervals $A_k = [-1/2 + e^k, 1/2 + e^k]$. Since $\rho_{p'(\cdot)}(\chi_{A_k}) = \rho_{\qq}(\chi_{A_k}) = |A_k| = 1$, by \cite[Corollary 2.23]{VLS} we have $\lVert \chi_{A_k} \rVert_{p'(\cdot)}, \lVert \chi_{A_k} \rVert_{\qq} \leq 1$ for all $k \in \N$. The interval $Q$ can intersect at most $j_0 \leq j$ consecutive intervals, say $A_{k_1} , \dots , A_{k_{j_0}}$. Let $P = Q \setminus \bigcup_{i=1}^{j_0} A_{k_i}$. For $x \in P$, $p'(x) = \frac{1+\alpha}{2\alpha^2-3\alpha+1}$. 
    
    We claim there exists $C(\alpha) > 0$ such that for all $j \geq 1$,
    $$j \leq C(\alpha) e^{(j-1)(2\alpha^2-3\alpha+1)/(1+\alpha)}.$$
    Define the functions $y_1(x) = x$ and $y_2(x) = e^{(x-1)(2\alpha^2-3\alpha+1)/(1+\alpha)}$. Then $y_1'(x) = 1$ and $y_2'(x) = \frac{2\alpha^2-3\alpha+1}{1+\alpha}e^{(x-1)(2\alpha^2-3\alpha+1)/(1+\alpha)} \geq \frac{2\alpha^2-3\alpha+1}{1+\alpha} $ for all $x \geq 1$. Thus $\frac{1+\alpha}{2\alpha^2-3\alpha+1} y_2(1) \geq y_1(1)$ and $\frac{1+\alpha}{2\alpha^2-3\alpha+1}y_2'(x) \geq y_1'(x)$ for all $x \geq 1$. This implies that for $x \geq 1$, 
    $$x \leq \frac{1+\alpha}{2\alpha^2-3\alpha+1} e^{(x-1)(2\alpha^2-3\alpha+1)/(1+\alpha)}.$$

    We now use this inequality to estimate as follows:
    \begin{multline*} \lVert \chi_Q \rVert_{p'(\cdot)} \leq \sum_{i=1}^{j_0} \lVert \chi_{A_{k_i}} \rVert_{p'(\cdot)} + \lVert \chi_P \rVert_{p'(\cdot)} \\
    = j_0 + |P|^{\frac{2\alpha^2-3\alpha+1}{1+\alpha}} \leq j + |Q|^{\frac{2\alpha^2-3\alpha+1}{1+\alpha}} \leq C(\alpha) |Q|^{\frac{2\alpha^2-3\alpha+1}{1+\alpha}}. 
    \end{multline*}

    We argue similarly for $\lVert \chi_{A_k} \rVert_{\qq}$. 
    We have that $q(x) = \frac{1+\alpha}{3\alpha(1-\alpha)}$ for all $x \in P$. A similar computation as above shows $j \leq C(\alpha) e^{3(j-1)(\alpha(1-\alpha))/(1+\alpha)}$ for all $j \geq 1$. Hence,
    \[ \lVert \chi_Q \rVert_{\qq} \leq \sum_{i=1}^{j_0} \lVert \chi_{A_{k_i}} \rVert_{\qq} + \lVert \chi_P \rVert_{\qq} = j_0 + |P|^{\frac{3\alpha(1-\alpha)}{1+\alpha}} \leq j + |Q|^{\frac{3\alpha(1-\alpha)}{1+\alpha}} \leq C(\alpha) |Q|^{\frac{3\alpha(1-\alpha)}{1+\alpha}}. \]
  If we combine these two estimates, we get
    \[ \lVert \chi_Q \rVert_{p'(\cdot)} \lVert \chi_Q \rVert_{\qq} \leq C(\alpha) |Q|^{\frac{2\alpha^2-3\alpha+1}{1+\alpha} + \frac{3\alpha(1-\alpha)}{1+\alpha}} = C(\alpha) |Q|^{1-\alpha}; \]
    hence, $\pp \in K_0^\alpha(\R)$. 

    \medskip

    \indent We now show that $M_\alpha$ is not bounded from $L^{\pp}(\R) \to L^{\qq}(\R)$. Define the sets $B_k = [-1/4 + e^k, 1/4 + e^k]$ and $C_k = [-3/2 + e^k, 3/2 + e^k] \setminus A_k$, and let
    \[ f(x) = \sum_{k=1}^\infty k^{-\frac{3\alpha(1-\alpha)}{1+\alpha}} \chi_{B_k}. \]
    Since
    \[ p(x) = \frac{1+\alpha}{2\alpha(2-\alpha)} + \frac{5-8\alpha+5\alpha^2}{6\alpha(1-\alpha)(2-\alpha)} = \frac{2-\alpha}{3\alpha(1-\alpha)}, \]
    for all $x \in B_k$,
    \begin{gather} \label{example modular}
        \rho_{\pp}(f) = \sum_{k=1}^\infty \int_{B_k} k^{-\frac{3p(x)\alpha(1-\alpha)}{1+\alpha}} \, dx = \frac{1}{2} \sum_{k=1}^\infty k^{-\frac{2-\alpha}{1+\alpha}} < \infty. 
    \end{gather} 
    Here we again use that $0 < \alpha < \tfrac12$: this implies that $-\frac{2-\alpha}{1+\alpha} < -1$ and so the sum  converges. Since $p_+ < \infty$, \eqref{example modular} implies $f \in L^{\pp}(\R)$ by \cite[Proposition 2.12]{VLS}. 
    
    On the other hand, if $x \in C_k$,
    \[ M_\alpha f(x) \geq |A_k \cup C_k|^{\alpha-1} \int_{A_k \cup C_k} f(y) \, dy = 3^{\alpha-1} \sum_{k=1}^\infty \int_{B_k} f(y) \, dy = \frac{3^{\alpha-1}}{2} k^{-\frac{3\alpha(1-\alpha)}{1+\alpha}}.  \]
    Since $q(x) = \frac{1+\alpha}{3\alpha(1-\alpha)}$ for all $x \in C_k$,
    \begin{multline*}
        \rho_{\qq}(M_\alpha f) \geq \sum_{k=1}^\infty \int_{C_k} M_\alpha f(x)^{q(x)} \, dx \\
        \geq \sum_{k=1}^\infty \int_{C_k} \left(\frac{3^{\alpha-1}}{2} k^{-\frac{3\alpha(1-\alpha)}{1+\alpha}}\right)^{\frac{1+\alpha}{3\alpha(1-\alpha)}} \, dx \geq \left(\frac{3^{\alpha-1}}{2}\right)^{\frac{1+\alpha}{3\alpha(1-\alpha)}} \sum_{k=1}^\infty k^{-1} = \infty.
    \end{multline*}
    Therefore, again by \cite[Proposition 2.12]{VLS}, $M_\alpha f \not \in L^{\qq}(\R)$. \\
    \end{proof}

 \begin{proof}[Construction of Example~\ref{example:K0stronger}]
    We will construct an exponent function $\pp \in \mathcal{P}(\R)$ such that $\| \chi_Q \|_{\pp} \simeq |Q|^{\frac{1}{p_Q}}$ for each interval $Q$, but for each $j \in \N$ there exists an interval $Q_j$ such that $\| \chi_{Q_j} \|_{p'(\cdot)} \geq j|Q_j|^{1/p_{Q_j}'}$. Then
    \[ \| \chi_{Q_j} \|_{\pp} \| \chi_{Q_j} \|_{p'(\cdot)} \gtrsim j|Q_j|^{1/p_{Q_j}} |Q_j|^{\frac{1}{p'_{Q_j}}} = j|Q|, \]
    so $\pp \not \in K_0(\R)$. Our construction will proceed as follows:  let $\pp$ be constant except for smooth perturbations of height $p_+ - p_-$ and width 1 spreading at a rate of $k^\beta$. There is a lower threshold on $\beta$, namely $\beta = p_+/p_-$, for which $\|\chi_Q\|_{\pp} \simeq |Q|^{\frac{1}{p_Q}}$ remains true. Thus by choosing $p_+$ and $p_-$ such that $p_+/p_- < (p')_+/(p')_-$, we can construct $\pp$ such that the perturbations are too close for $\| \chi_Q \|_{p'(\cdot)} \simeq |Q|^{1/p'_Q}$ to hold but far enough apart so that $\|\chi_Q\|_{\pp} \simeq |Q|^{\frac{1}{p_Q}}$.

   The details of our construction are very similar to the construction of Example~\ref{example:K0alpha-not-suff}.  Let $\varphi \in C_c^{\infty}(\R)$ satisfy $\supp(\varphi) \subset [0,1]$, $0 \leq \varphi(x) \leq 4/5$, and $\varphi(x) = 4/5$ for $x \in [1/4,3/4]$. Define
    \[ p(x) = \frac65 + \sum_{k=1}^\infty \varphi(x - k^{2}). \]
  Then we have that $6/5 \leq p(x) \leq 2$, $2 \leq p'(x) \leq 6$, and $\pp, p'(\cdot) \in LH_0(\R)$. We will first show that $\| \chi_Q \|_{\pp} \simeq |Q|^{\frac{1}{p_Q}}$. Let $\E = \{Q : |Q| \leq 1\}$. Since $\pp,p'(\cdot) \in LH_0(\R)$, if we argue as we did for inequalities~\eqref{eqn:smallcube1} and~\eqref{eqn:smallcube2}, we get $\pp \in K_0(\E)$.  But then, by Proposition~\ref{norm and harmonic mean proposition}, we get that $\| \chi_Q \|_{\pp} \leq M_1 |Q|^{\frac{1}{p_Q}}$ for some $M_1 > 0$ and all intervals $Q$, $|Q| \leq 1$. 
  
 Now suppose that $|Q|\geq 1$.  Then for some $j \geq 2$, $(j-1)^2 \leq |Q| \leq j^2$. For each $k$, let  $A_k := [k^2, k^2+1]$; then $Q$ intersects at most $j_0 \leq j$ of the $A_k$, say $A_{k_1},\dots,A_{k_{j_0}}$. Define $P := Q \setminus \bigcup_{i=1}^{j_0} A_{k_i}$; then $|P| \geq (j-1)^2 - j_0 = j^2 - 2j - j_0 + 1$ and $|Q \setminus P| \geq j_0 - 2$.  Given this, we have that
    \begin{align*}
        \frac{1}{p_Q} = \dashint_Q \frac{1}{p(x)} \, dx &= |Q|^{-1} \left(\int_{P} \frac{1}{p(x)} \, dx + \int_{Q \setminus P} \frac{1}{p(x)} \, dx \right) \\
        &\geq |Q|^{-1}\left(\frac{5|P|}{6} + \frac{|Q \setminus P|}2\right) \\
        &\geq |Q|^{-1}\left(\frac{5}{6}(j^2-2j-j_0+1) + \frac{j_0-2}{2}\right) \\
        &\geq j^{-2}\left(\frac{5}{6}j^2 - 2j - \frac{1}{6}\right). 
    \end{align*}
    Let $M_2 \geq 1$ be a constant whose value will be fixed below. If we estimate the modular, we get
    \begin{multline*}
        \rho_{\pp}(\chi_Q / M_2|Q|^{\frac{1}{p_Q}}) = \int_{Q} (M_2|Q|^{\frac{1}{p_Q}})^{-p(x)} \, dx \\
        \leq M_2^{-p_-} \int_{Q} |Q|^{-\frac{p_-}{p_Q}} \, dx 
        \leq M_2^{-1} j^{2(1 - \frac{6}{5}j^{-2}(\frac56 j^2 - 2j - \frac16))},
    \end{multline*}
    where  we have used that  $|Q| \leq j^2$ and our lower bound on $\frac{1}{p_Q}$. We claim that the function $f(x) = x^{2(1 - \frac{6}{5}x^{-2}(\frac56 x^2 - 2x - \frac16))}$ is bounded for $x \geq 1$. It is differentiable for all such $x$ and 
    \[ f'(x) = -\frac{(24x+4)\ln(x) - 24x - 2}{5x^{\frac{5 x^2 - 24 x - 2}{5x^2}}}. \]
    The denominator is positive for $x \geq 1$, and for $x \geq e$, 
    \[ (24x + 4)\ln(x) - 24x - 2 > (24x + 4) - 24x - 2 > 0, \] 
    so that the numerator is positive as well. Thus $f$ is decreasing for $x \in [e,\infty)$. Moreover, $f$ is continuous for $x \geq 1$ so attains a maximum on $[1,e]$. Therefore,  $f$ is bounded for $x \geq 1$. Let $M_2 = \max_{x \geq 1} f(x)$; then 
    \[ \rho_{\pp}(\chi_Q / M_2|Q|^{\frac{1}{p_Q}}) \leq M_2^{-1} f(j) \leq 1. \]
    If we let $M := \max\{M_1,M_2\}$, then we have that  $\| \chi_Q \|_{\pp} \leq M|Q|^{\frac{1}{p_Q}}$ for all intervals $Q$, $|Q|\geq 1$. The opposite inequality holds for all $Q$:  see the proof of inequality~\eqref{|E| 1/P_E Relation}. Therefore, we have shown that $\| \chi_Q \|_{\pp} \simeq |Q|^{\frac{1}{p_Q}}$ for all cubes $Q$.

    On the other hand, fix $j \geq 2$ and let $Q_j = [1,j^{30}+1]$. Define $C_k = [k^2 + \frac14,k^2+\frac34]$ and let $B_j := \bigcup_{k=1}^{j^{15}} C_k$; then  $B_j  \subset Q_j$ and $p'(x) = 2$ for $x \in B_j$. We claim that $p'_{Q_j} \geq 5$. To see this, note that this is equivalent to $\frac{1}{p_{Q_j}}\geq \frac{4}{5}$.   To see this. let $\bar{C}_k = [k^2,k^2+1]$ and let $\bar{B}_j= \bigcup_{k=1}^{j^{15}} \bar{C}_k$.  Then on $Q_j\setminus \bar{B}_j$, $p(x)=\frac{6}{5}$, and so
    \[ \frac{1}{p_{Q_j}} \geq \frac{1}{|Q_j|} \int_{Q_j \setminus \bar{B}_j} \frac{1}{p(x)}\,dx 
   =\frac{5}{6}\frac{|Q_j \setminus \bar{B}_j|}{|Q_j|} > \frac{4}{5}. 
\]
Therefore, 
    \begin{equation*}
        \rho_{p'(\cdot)}(\chi_Q/j|Q_j|^{\frac{1}{p'_{Q_j}}}) 
        \geq \int_{B_j} (j|Q_j|^{\frac{1}{p_{Q_j}}})^{-p'(x)} \, dx 
       \geq j^{-2} |B_j| |Q_j|^{-\frac{2}{5}} = \frac j2 \geq 1.
    \end{equation*}
    Hence, $\| \chi_{Q_j} \|_{\cpp} \geq j|Q_j|^{\frac{1}{p'_{Q_j}}}$, and so $\pp \not\in K_0(\R)$.
    \end{proof}

\medskip

    \begin{proof}[Construction of Example~\ref{example:ppK0-not K0alpha}]
        If $\pp \in K_0^\alpha(\R)$, then by Lemma~\ref{K_0^alpha implies K_0 lemma}, $\pp,\qq \in K_0(\R)$. Thus it suffices to exhibit an exponent function such that $\pp \in K_0(\R)$ but $\qq \not \in K_0(\R)$. We will construct $\pp$ and $\qq$ by modifying the construction in Example~\ref{example:K0stronger}.  Choose constants $1 < p_- < \frac1\alpha$ and $p_- < p_+ < \frac1\alpha$. If we define $1/q_\pm = 1/p_{\pm} - \alpha$, then
        \[ \frac{q_+}{q_-} = \frac{p_+}{p_-} \left(\frac{1-\alpha p_-}{1-\alpha p_+} \right) > \frac{p_+}{p_-}. \]
       Let $\beta_p = p_+ / p_-, \, \beta_q = q_+/q_-$, and fix $\delta > 1$ be such that $\beta_q = \delta \beta_p$. Let $\varphi \in C_c^{\infty}(\R)$ satisfy $\supp(\varphi) \subset [0,1]$, $0 \leq \varphi(x) \leq p_+ - p_-$, and $\varphi(x) = p_+ - p_-$ for $x \in [1/4,3/4]$. Define
        \[ p(x) = p_+ - \sum_{k=1}^\infty \varphi(x - k^{\beta_p}). \]
        We claim $\pp \in K_0(\R)$. It is immediate that $\pp,p'(\cdot) \in LH_0(\R)$. Arguing as in Example~\ref{example:K0stronger}, tt follows that for intervals $Q$, $|Q| \leq 1$, there exists a constant $C > 0$ such that
        \[ \lVert \chi_Q \rVert_{\pp} \lVert \chi_Q \rVert_{p'(\cdot)} \leq C|Q|. \]

        Now suppose $|Q|>1$.  Then  for some $j \geq 2$, $(j-1)^{\beta_p} \leq |Q| \leq j^{\beta_p}$. Define $A_k = [k^{\beta_p}, k^{\beta_p}+1]$ for each $k \geq 1$.  The interval $Q$ intersects at most $j_0 \leq j$ consecutive intervals $A_k$, say $A_{k_1},\dots,A_{k_{j_0}}$. Define $P = Q \setminus \bigcup_{i=1}^{j_0} A_{k_i}$.  Then  by \cite[Corollary~2.23]{VLS},
        \[ \rho_\pp\big(\chi_{\bigcup_{i=1}^{j_0} A_{k_i}}\big) = j_0,  \]
        and using  that $p(x) = p_+$ for $x \in P$ and $j^{1/p_-} \leq 2(j-1)^{1/p_-} \leq 2|Q|^{1/p_+}$, we have that
    \begin{multline*}
         \lVert \chi_Q \rVert_{\pp} \leq \lVert \chi_{\bigcup_{i=1}^{j_0} A_{k_i}} \rVert_{\pp} + \lVert \chi_P \rVert_{\pp} \\
         \leq j^{1/p_-} + |P|^{1/p_+} \leq 2(j-1)^{1/p_-} + |Q|^{1/p_+} \leq 3|Q|^{1/p_+}.
    \end{multline*}
    Similarly, using that  $p'(x) = (p')_-$ for $x \in P$ and $j \leq 2(j-1)^{\beta_p} \leq 2|Q|$,  we have that
    \[ \lVert \chi_Q \rVert_{p'(\cdot)} \leq j^{1/(p')_-} + |P|^{1/(p')_-} \leq (2|Q|)^{1/(p')_-} + |Q|^{1/(p')_-} \leq 3|Q|^{1/(p')_-}.\]
    If we multiply these two estimates, we get
    \[ \lVert \chi_Q \rVert_{\pp} \lVert \chi_Q \rVert_{p'(\cdot)} \leq (3|Q|^{1/p_+})(3|Q|^{1/(p')_-}) = 9|Q|. \]
    Thus $\pp \in K_0(\R)$. \\

    \medskip
    
    We will now show that $\qq \not \in K_0(\R)$ by showing that for all $j$ sufficiently large, there exists an interval $Q_j$ such that $\| \chi_{Q_j} \|_{\qq} \geq j|Q_j|^{1/q_{Q_j}}$. Let 
    \[ \gamma = \frac{1+q_-}{1-\frac{2}{\delta+1}}. \]
    For each $j \geq 1$, define $Q_j = [1,j^{\beta_p \gamma} + 1]$. For all $k$, let  $C_k = [k^{\beta_p} + \frac14, k^{\beta_p} + \frac34]$. Then $B_j := \bigcup_{k=1}^{\lfloor j^\gamma \rfloor} C_k \subset Q_j$ and $q(x) = q_-$ for $x \in B_j$.  Since $q(x)=q_+$ on $Q_j\setminus B_j$, it follows that $q_{Q_j} \to q_+$ as $j \to \infty$.  Hence, there exists $J\geq 2$  such that if $j \geq J$,
    \[ q_{Q_j} \geq \frac{q_+ + \beta_p q_-}{2}. \]
    Therefore, for $j \geq J$, 
    \begin{align*}
        \rho_{\qq}(\chi_Q / j|Q_j|^{1/q_{Q_j}}) &\geq \int_{B_j} (j|Q_j|^{1/q_{Q_j}})^{-q(x)} \, dx \\
        &\geq j^{-q_-}|B_j||Q_j|^{-q_-/q_{Q_j}} = \frac{1}{2}j^{\gamma - q_- - \frac{\beta_p \gamma q_-}{q_{Q_j}}}.
    \end{align*}
    By our bound on $q_{Q_j}$ and the definition of $\gamma$,
    \[ \gamma - q_- - \frac{\beta_p \gamma q_-}{q_{Q_j}} \geq \gamma - q_- - \frac{2\beta_p \gamma q_-}{q_+ + \beta_p q_-} = \gamma - q_- - \frac{2\gamma}{\delta+1} = 1. \]
    Thus $\rho_{\qq}(\chi_Q / j|Q_j|^{1/q_{Q_j}}) \geq 1$ for $j \geq J$. Therefore, $\|\chi_Q \|_{\qq} \geq j|Q|^{1/q_{Q_j}}$, and our proof is complete.
    \end{proof}

    \begin{proof}[Construction of Example~\ref{example:qqK0-notK0alpha}]
        The construction is nearly identical to the construction of Example~\ref{example:ppK0-not K0alpha}. Choose $1 < p_- < \frac1\alpha$ and $p_- < p_+ < \frac1\alpha$. Define
        \[ \beta_{p'} := \frac{(p')_+}{(p')_-} = \frac{p_-(p_+ - 1)}{p_+(p_- - 1)}, \hspace{0.8cm} \beta_{q'} := \frac{(q')_+}{(q')_-} = \frac{p_-(p_+ - 1 + \alpha p_+)}{p_+(p_- - 1 + \alpha p_-)}.\]
        Since $p_+ > p_-$, we have that
        \[ (p_+ - 1)(p_- - 1 + \alpha p_-) > (p_- - 1)(p_+ - 1 + \alpha p_+), \]
        and so $\beta_{p'} > \beta_{q'}$. Let $\varphi \in C_c^{\infty}(\R)$ satisfy $\supp(\varphi) \subset [0,1]$, $0 \leq \varphi(x) \leq p_+ - p_-$, and $\varphi(x) = p_+ - p_-$ for $x \in [1/4,3/4]$. Define
        \[ p(x) = p_- + \sum_{k=1}^\infty \varphi(x - k^{\beta_{q'}}). \]
        We claim $\qq \in K_0(\R)$; the proof is the same as the proof that $\pp \in K_0(\R)$ in Example~\ref{example:ppK0-not K0alpha}.  However, $\pp \not \in K_0(\R)$. One can again repeat the previous argument, using the fact that $\beta_{p'} > \beta_{q'}$,  to show that for each $j$ sufficiently large there exists an interval $Q_j$ such that $\| \chi_{Q_j} \|_{p'(\cdot)} \geq j|Q_j|^{\frac{1}{p'_{Q_j}}}$. Therefore, we omit the details.
    \end{proof}




%
\bibliographystyle{plain}
\bibliography{frac-necc-var}

\end{document}